\newtheorem{theorem}{Theorem}[section]
\theoremstyle{definition}
\newtheorem{definition}[theorem]{Definition}
\theoremstyle{remark}
\numberwithin{equation}{section}
\numberwithin{figure}{section}
\newcommand{\abs}[1]{\left|#1\right|}
\newcommand{\eqdef}{\stackrel{\mathrm{def}}{=\joinrel=}}
\newcommand{\po}[1]{\frac{\partial}{\partial{#1}}}
\newcommand{\pp}[2]{\frac{\partial {#1}}{\partial {#2}}}
\newcommand{\cfl}{{\textrm{cfl}}}
\newcommand{\bc}{{\textrm{bc}}}
\newcommand{\upw}{{\textrm{upw}}}
\newcommand{\muscl}{{\textrm{muscl}}}
\newcommand{\minmod}{{\textrm{minmod}}}
\newcommand{\term}{{\textrm{term}}}
\begin{document}

\title[FVM of Infiltration in Tumor Growth]{On Finite Volume Discretization of Infiltration Dynamics in Tumor Growth Models}

\author[X.~Zeng]{Xianyi Zeng}
\address{Department of Mathematical Sciences,\\
         Computational Science Program, University of Texas at El Paso, El Paso, TX 79902, United States.\\
         Tel.: +1-915-747-6759}
\email[Corresponding author, X.~Zeng]{xzeng@utep.edu}

\author[M.~Saleh]{Mashriq Ahmed Saleh}
\address{Computational Science Program, University of Texas at El Paso, El Paso, TX 79902, United States.}
\email[M.~Saleh]{msaleh@miners.utep.edu}


\author[J.~Tian]{Jianjun Paul Tian}
\address{Department of Mathematical Science, New Mexico State University, Las Cruces, NM 88003, United States.}
\email[J.~Tian]{jtian@nmsu.edu}

\date{\today}

\subjclass[2010]{65M08 \and 35R35 \and 35Q92}

\keywords{
  Finite volume methods;
  Cell incompressibility;
  Free boundary problems;
  Patlak-Keller-Segel system;
  Tumor growth modeling.
}

\begin{abstract}
We address numerical challenges in solving hyperbolic free boundary problems described by spherically symmetric conservation laws that arise in the modeling of tumor growth due to immune cell infiltrations.
In this work, we normalize the radial coordinate to transform the free boundary problem to a fixed boundary one, and utilize finite volume methods to discretize the resulting equations.
We show that the conventional finite volume methods fail to preserve constant solutions and the incompressibility condition, and they typically lead to inaccurate, if not wrong, solutions even for very simple tests.
These issues are addressed in a new finite volume framework with segregated flux computations that satisfy sufficient conditions for ensuring the so-called totality conservation law and the geometric conservation law.
Classical first-order and second-order finite volume methods are enhanced in this framework.
Their performance is assessed by various benchmark tests to show that the enhanced methods are able to preserve the incompressibility constraint and produce much more accurate results than the conventional ones.
\end{abstract}

\maketitle

\section{Introduction}
\label{sec:intro}
Modeling the tumor growth due to immune cell infiltration using partial differential equations (PDE) has been an active research area in recent years.
One of the earliest papers addressing this phenomenon from a mathematical point of view is by Evelyn F. Keller and Lee A. Segel~\cite{EKeller:1971a}, who model the cell movements by Brownian motion and conclude that they generally move towards a region with high chemoattractant concentration.
The Patlak-Keller-Segel (PKS) chemotaxis system, which describes the interaction between the cell and the chemoattractant, is then studied both theoretically and numerically by various authors~\cite{MBurger:2006a, VCalvez:2006a, FFilbet:2006a, BPerthame:2009a, IKim:2012a, EEspejo:2013a, AChertock:2018a}.
Existing literature focuses on solving the PKS system on a fixed domain; hence they are suitable for describing the cell movements inside the tumor but not for modeling how the tumor grows.
Recently, B. Niu and the authors of the current paper propose a free boundary model that extends the PKS system to describe the growth of tumor due to immune cell infiltration~\cite{BNiu:2018a}.
In this model, the immune cells are attracted by the chemoattractant that usually has higher concentration inside the tumor and enter the tumor boundary; the mean cell movement velocity is derived by assuming the cells are incompressible, i.e., the total cell number per unit volume is assumed to be constant.
The incompressibility is a crucial assumption -- because the cells have fixed volume, when immune cells enter through the tumor boundary they need to compete with native ones for space and eventually promote tumor growth.

It should be noted that treating biological systems as free boundary problems is by no means new.
In the literature, there are numerous successful studies addressing the existence and uniqueness of solutions to such PDE systems~\cite{XChen:2003a, SCui:2003a} as well as conducting well-behaved numerical simulations~\cite{WHao:2016a}.
We would like to emphasize, however, that these studies rely on the fact that the same velocity field is used for the advection of all cell species; hence a characteristic method (in the analytical approach) or a Lagrangian strategy (in the computational approach) can be applied.
This is not the case with infiltration dynamics, as by nature the invading species and the native ones are carried by different velocities.
It is worth mentioning that in a recent work by A. Friedman et al.~\cite{AFriedman:2010a}, the authors prove the global existence and uniqueness of solutions to a free boundary problem that contains infiltrating species; however, the governing equations therein are of parabolic type, which is very different from what we're considering here -- because there is no diffusion term for cell species, the model considered in this paper does not contain regularization and shocks do occur in the solution process.

Studies on free boundary problems of hyperbolic type that involves infiltration dynamics, to our best knowledge, remain scarce in the literature. 
In this work, we attempt to close this gap by proposing a new finite volume framework for the discretization of a general class of equations.
Particularly, we consider the migration of two categories of cell species -- the cell species belonging to the first category move inside the tumor and will never cross the boundary, whereas the second category involve all the infiltrating species; the motion of both types are governed by hyperbolic equations.
The methodology is described in a very general setting, in the sense that it is not restricted to any particular cell proliferation, apoptosis, and interaction models.
To this end, the method we propose is suitable for the investigation of any similar systems, such as the plaque development and the wound healing processes~\cite{AFriedman:2010a, WHao:2014a, AFriedman:2015a}.
However, for the ease of statement we set our context in tumor growth modeling and use the term ``cell'' to refer to any entities that play a part in the incompressibility constraint, see Section~\ref{sec:rev}.

\smallskip

In previous work~\cite{BNiu:2018a}, the spherically symmetric free-boundary problem is considered and solved numerically by first mapping the physical coordinates onto a fixed logical one and then discretization using the conventional finite volume methods, see also Section~\ref{sec:rev} for a brief review of this model.
Although the shocks are captured nicely, clear violation of the incompressibility assumption is observed, especially near the tumor center.
A major cause is that incompressibility is not enforced directly by the model; instead, it is assumed in the derivation of the velocity equation.
In addition, geometrical source terms appear when we change from the physical coordinate to the logical one; and existing finite volume methods cannot balance them well, even when the solutions are constants.

To resolve these issues, we investigate a simplified model that easily extends to the full tumor growth model of~\cite{BNiu:2018a}.
The totality conservation law (TCL) and the geometric conservation law (GCL) are defined and justified as the criterion for any numerical method to maintain constant solutions and satisfy the incompressibility condition.
The new finite volume methods are developed in three stages.
First, we design a general finite volume framework for solving the model system, and extend the TCL and GCL to the discrete level, called DTCL and DGCL, respectively, where the first letter ``D'' stands for ``discrete''.
Next, we propose several consistency properties, so that for any numerical flux that satisfies these properties, the resulting method will satisfy both DTCL and DGCL.
Finally, the classical first-order upwind method and the second-order MUSCL flux~\cite{BvanLeer:1979a} are enhanced according to these conditions. 

The remainder of the paper is organized as follows.
In Section~\ref{sec:rev} we briefly review the original tumor growth model as well as the incorporation of the incompressibility assumption.
Then, a simplified model that captures the most important features is described in Section~\ref{sec:model}.
The main results and the proposed methods are derived in Section~\ref{sec:fvm}, where we propose the DTCL and DGCL conditions and prove sufficient conditions for the numerical method to satisfy these conditions.
Extensive numerical tests are provided in Section~\ref{sec:num} to assess the performance of the enhanced methods, which is compared to the existing finite volume methods.
Finally, Section~\ref{sec:concl} concludes this paper.

\section{A Review of the Tumor Growth Model and Its Finite Volume Discretization}
\label{sec:rev}
In the tumor growth model proposed earlier~\cite{BNiu:2018a}, we consider the movement of glioma (or cancer) cells, necrotic cells, and immune cells, whose number densities are denoted by $G(r,t)$, $N(r,t)$, and $M(r,t)$, respectively.
Here $r$ is the distance from a point inside the (spherically symmetric) tumor to the center and $t$ is the time ordinate.
The cells are supposed to be incompressible, in the sense that one expects:
\begin{equation}\label{eq:rev_inc}
  G(r,t) + N(r,t) + M(r,t) = \theta\;,
\end{equation}
for some constant $\theta$ that designates the total number of cells per unit volume.

The velocities of the cell movements are determined by two aspects.
First, because of the incompressibility assumption each cell takes a fixed volume; hence when the cells are squeezed they tend to move to the nearby region and eventually cause the tumor to grow or shrink.
The velocity due to the cell-volume-preserving mechanism is denoted by $V(r,t)$, and it is the solely velocity that is responsible for the movement of glioma cells and necrotic cells.
Second, in addition to $V$, the immune cells are also guided by the chemoattractant concentration, as discussed by Evelyn F. Keller and Lee A. Segal~\cite{EKeller:1971a} in the early 1970s.
The corresponding velocity is denoted by $U(r,t)$, and it is positive related to the gradient $\partial A(r,t)/\partial r$, where $A(r,t)$ is the chemoattractant concentration.

In the spherical coordinates, the equations that govern the cell movements are thusly given by:
\begin{subequations}\label{eq:rev_eqn}
\begin{align}
\label{eq:rev_eqn_g}
&\pp{G}{t}+\frac{1}{r^2}\po{r}\left[r^2GV\right] = f(r,t,G,N,M)\;, \\
\label{eq:rev_eqn_n}
&\pp{N}{t}+\frac{1}{r^2}\po{r}\left[r^2NV\right] = g(r,t,G,N,M)\;, \\
\label{eq:rev_eqn_m}
&\pp{M}{t}+\frac{1}{r^2}\po{r}\left[r^2M(V+U)\right] = h(r,t,G,N,M)\;,\\
\label{eq:rev_eqn_u}
&U = \alpha\pp{A}{r}\;.
\end{align}
Here $\alpha>0$ is a positive parameter that is supposed to be constant; and $f,g,h$ are source terms that describe the production and diminishing of the cells.
In this paper, we follow the convention that a single upper case letter denotes a dependent variable to be solved, and a single lower case letter designates an independent variable or a prescribed function.
Our numerical method will not depend on the particular forms of the source functions; from a modeling point of view, however, examples of these functions are given below.
Let $\lambda$ and $\mu$ be the self-production and transformation rates of the cancer cells, we have:
\begin{equation}\label{eq:rev_eqn_sg}
f(G,N,M) = \lambda G - \mu G\;;
\end{equation}
here $\mu$ is the rate at which the cancer cells convert to necrotic cells, which are removed from the tumor by the rate $\delta$, hence one can model:
\begin{equation}\label{eq:rev_eqn_sn}
g(G,N,M) = \mu G - \delta N\;;
\end{equation}
and finally if the only way that the immune cells are gone is through their own death, which happens at the rate $\rho$, then the source term $h$ can be modeled as:
\begin{equation}\label{eq:rev_eqn_sm}
h(G,N,M) = -\rho M\;.
\end{equation}
For more details about the rationale behind these source functions, the readers are referred to~\cite{BNiu:2018a} and the references therein.

The equations~(\ref{eq:rev_eqn_g})--(\ref{eq:rev_eqn_u}) are valid for all $(r,t)\;:\ 0\le r\le R(t)$, where $R(t)>0$ is the radius of the tumor at time t, whose growth is governed by:
\begin{equation}\label{eq:rev_eqn_r}
R'(t) = V(R(t),t)\;.
\end{equation}

The equation for the velocity field $V(r,t)$ is derived by summing up (\ref{eq:rev_eqn_g})--(\ref{eq:rev_eqn_m}) and invoking the incompressibility assumption~(\ref{eq:rev_inc}):
\begin{equation}\label{eq:rev_eqn_v}
\frac{1}{r^2}\po{r}\left[r^2\theta V+r^2UM\right] = f+g+h\;.
\end{equation}

To complete the system, the chemoattractant $A$ is generally secreted by the glioma cells and subject to the diffusion rate $\nu$ and diminishing rate $\gamma$:
\begin{equation}\label{eq:rev_eqn_a}
\pp{A}{t}=\nu\frac{1}{r^2}\po{r}\left[r^2\pp{A}{r}\right]+\frac{\chi mG}{\beta+G}-\gamma A\;,\quad
0\le r< +\infty\;.
\end{equation}
Note that this equation is valid on the entire domain since the chemoattractant exists in the entire body, which is supposed to be much larger than the tumor.
The indicator function $\chi$ in the second term of the right hand side equals $1$ when $0\le r\le R(t)$ and equals $0$ otherwisely.
\end{subequations}

Finally, the governing equation~(\ref{eq:rev_eqn}) is complemented by appropriate initial conditions for $G$, $N$, $M$, and $A$ such that (\ref{eq:rev_inc}) is satisfied, and the following boundary conditions:
\begin{subequations}\label{eq:rev_bc}
\begin{align}
\label{eq:rev_bc_cell}
&\pp{G(0,t)}{r} = \pp{N(0,t)}{r} = 0\;, \\
\label{eq:rev_bc_imm}
&\pp{M(0,t)}{r} = 0\;,\quad
M(R(t),t) = M_{\bc}(t)\ \textrm{ if }\ U(R(t),t)<0\;, \\
\label{eq:rev_bc_chem}
&\pp{A(0,t)}{r} = 0\;,\quad\lim_{r\to+\infty}A(r,t) = 0\;, \\
\label{eq:rev_bc_vel}
&V(0,t) = 0\;.
\end{align}
\end{subequations}
Here the second part of (\ref{eq:rev_bc_imm}) is known as the incoming boundary condition and $M_{\bc}$ is the prescribed embient number density of immune cells.

\subsection{Conservation form in normalized coordinates}
\label{sec:rev_nrm}
To avoid the difficulty of dealing with a time-varying domain, we cast the equations to the normalized coordinates $(\eta,\tau)=(R(t)/t,t)$ and rescale the equations to obtain a conservation system:
\begin{subequations}\label{eq:rev_nrm_eqn}
\begin{align}
\label{eq:rev_nrm_eqn_g}
&\pp{(\eta^2R^2G)}{\tau} + \po{\eta}\left[\left(\frac{V}{R}-\frac{\eta R'}{R}\right)\eta^2R^2G\right] = \eta^2R^2f(G,N,M)-\eta^2R'RG\;,\\
\label{eq:rev_nrm_eqn_n}
&\pp{(\eta^2R^2N)}{\tau} + \po{\eta}\left[\left(\frac{V}{R}-\frac{\eta R'}{R}\right)\eta^2R^2N\right] = \eta^2R^2g(G,N,M)-\eta^2R'RN\;,\\
\label{eq:rev_nrm_eqn_m}
&\pp{(\eta^2R^2M)}{\tau} + \po{\eta}\left[\left(\frac{V}{R}-\frac{\eta R'}{R}+\frac{U}{R}\right)\eta^2R^2N\right] = \eta^2R^2h(G,N,M)-\eta^2R'RM\;,\\
\label{eq:rev_nrm_eqn_u}
&U = \frac{\alpha}{R}\pp{A}{\eta}\;,\\
\label{eq:rev_nrm_eqn_v}
&\frac{1}{\eta^2R}\po{\eta}\left[\eta^2(\theta V+UM)\right] = f+g+h\;,
\end{align}
for all $0\le \eta\le 1$ and $\tau\ge0$; and 
\begin{equation}
\label{eq:rev_nrm_eqn_a}
\pp{(\eta^2R^2A)}{\tau}+\po{\eta}\left[\left(-\frac{\eta R'}{R}\right)\eta^2R^2A\right]=\nu\po{\eta}\left(\eta^2\pp{A}{\eta}\right)+\frac{\chi m\eta^2R^2G}{\beta+G}-\gamma\eta^2R^2A-\eta^2R'RA\;,
\end{equation}
on the domain $0\le\eta<+\infty,\;\tau\ge0$, and $\chi$ is the indicator function that equals $1$ when $0\le\eta\le1$ and $0$ otherwise.
Finally, the radius is evolved as:
\begin{equation}
\label{eq:rev_nrm_eqn_r}
R'(\tau) = V(1,\tau)\;.
\end{equation}
\end{subequations}
Note that the $-\eta^2R'R$ terms are new, and they appear because of our change of coordinates.
The boundary conditions are given by:
\begin{subequations}\label{eq:rev_nrm_bc}
\begin{align}
\label{eq:rev_nrm_bc_cell}
&\pp{G(0,\tau)}{\eta} = \pp{N(0,\tau)}{\eta} = 0\;, \\
\label{eq:rev_nrm_bc_imm}
&\pp{M(0,\tau)}{\eta} = 0\;,\quad
M(1,\tau) = M_{\bc}(\tau)\ \textrm{ if }\ U(1,\tau)<0\;, \\
\label{eq:rev_nrm_bc_chem}
&\pp{A(0,\tau)}{\eta} = 0\;,\quad\lim_{\eta\to+\infty}A(\eta,\tau) = 0\;, \\
\label{eq:rev_nrm_bc_vel}
&V(0,\tau) = 0\;.
\end{align}
\end{subequations}

\subsection{Finite volume discretization}
\label{sec:rev_fvm}
In previous work, the conservative equations (\ref{eq:rev_nrm_eqn_g})--(\ref{eq:rev_nrm_eqn_m}) and (\ref{eq:rev_nrm_eqn_a}) are discretized by the standard finite volume methods, see for example~\cite{BvanLeer:1979a, RLeVeque:2002a}. 
We briefly review the first-order upwind method here as well as introduce some notations that will be used throughout the paper.

The logical domain $\eta\in[0,\;1]$ is divided into $N_\eta$ uniform intervals\footnote{To avoid confusion, we reserve the word ``cell'' exclusively for denoting the cell species; whereas the commonly used ``cell'' in finite volume discretization is referred to as ``interval'' throughout the paper.}, each of which has length $\Delta\eta=1/N_{\eta}$; and we denote the interval faces by $\eta_j=j\Delta\eta$ and interval centers by $\eta_{j-1/2}=(j-1/2)\Delta\eta$.
For easy reading, we use the integer subscripts to denote nodal variables, whereas the half-integer subscripts to denote the variables that are associated with intervals, such as the interval-averages.

In particular, because the cell numbers are conserved quantities, in the general finite volume discretization these variables are defined for each interval, and they're denoted by $G_{j-1/2}$, $N_{j-1/2}$, and $M_{j-1/2}$, where $1\le j\le N_{\eta}$.
Considering in addition the forward-Euler time integrator and designating the discrete solutions at time step $\tau^n$ by the superscript $n$, the general finite volume discretization reads:
\begin{subequations}\label{eq:rev_fvm_fe}
\begin{align}
\label{eq;rev_fvm_fe_g}
&\frac{(R^{n+1})^2G^{n+1}_{j-1/2}-(R^n)^2G^n_{j-1/2}}{\Delta\tau} + \frac{F^{G,n}_j-F^{G,n}_{j-1}}{\eta_{j-1/2}^2\Delta\eta} = (R^n)^2f_{j-1/2}^n-R'^nR^nG_{j-1/2}^n\;,\\
\label{eq;rev_fvm_fe_n}
&\frac{(R^{n+1})^2N^{n+1}_{j-1/2}-(R^n)^2N^n_{j-1/2}}{\Delta\tau} + \frac{F^{N,n}_j-F^{N,n}_{j-1}}{\eta_{j-1/2}^2\Delta\eta} = (R^n)^2g_{j-1/2}^n-R'^nR^nN_{j-1/2}^n\;,\\
\label{eq;rev_fvm_fe_m}
&\frac{(R^{n+1})^2M^{n+1}_{j-1/2}-(R^n)^2M^n_{j-1/2}}{\Delta\tau} + \frac{F^{M,n}_j-F^{M,n}_{j-1}}{\eta_{j-1/2}^2\Delta\eta} = (R^n)^2h_{j-1/2}^n-R'^nR^nM_{j-1/2}^n\;,
\end{align}
where $f_{j-1/2}^n=f(G_{j-1/2}^n,N_{j-1/2}^n,M_{j-1/2}^n)$ and $h_{j-1/2}^n$ and $g_{j-1/2}^n$ are similarly computed; the radius related quantities are:
\begin{equation}\label{eq:rev_fvm_fe_r}
R'^n = V_{N_\eta}^n\;,\quad
R^{n+1} = R^n + \Delta\tau V_{N_\eta}^n\;.
\end{equation}
We define the velocity at the nodes, and $V_{N_\eta}^n$ is the numerical approaximation to $V(1,\tau^n)$, see also the discussion below (\ref{eq:rev_fvm_upw}).
\end{subequations}

The numerical flux $F_j^{X,n}$, where $X$ stands for $G$, $N$, or $M$, is an approximation to the corresponding flux for $X$ at $\eta_j$.
If we apply the existing finite volume methods to compute these numerical fluxes, for example, by using the first-order upwind flux, we have:
\begin{subequations}\label{eq:rev_fvm_flux}
\begin{align}
\label{eq:rev_fvm_flux_g}
F_j^{G,n} &= \mathcal{F}^{\upw}\left(\frac{V_j^n}{R^n}-\frac{\eta_jR'^n}{R^n};\;\eta_{j-1/2}^2(R^n)^2G_{j-1/2}^n,\;\eta_{j+1/2}^2(R^n)^2G_{j+1/2}^n\right)\;, \\
\label{eq:rev_fvm_flux_n}
F_j^{N,n} &= \mathcal{F}^{\upw}\left(\frac{V_j^n}{R^n}-\frac{\eta_jR'^n}{R^n};\;\eta_{j-1/2}^2(R^n)^2N_{j-1/2}^n,\;\eta_{j+1/2}^2(R^n)^2N_{j+1/2}^n\right)\;, \\
\label{eq:rev_fvm_flux_m}
F_j^{M,n} &= \mathcal{F}^{\upw}\left(\frac{V_j^n}{R^n}-\frac{\eta_jR'^n}{R^n}+\frac{U_j^n}{R^n};\;\eta_{j-1/2}^2(R^n)^2M_{j-1/2}^n,\;\eta_{j+1/2}^2(R^n)^2M_{j+1/2}^n\right)\;.
\end{align}
\end{subequations}
Here the upwind flux is defined as:
\begin{equation}\label{eq:rev_fvm_upw}
\mathcal{F}^{\upw}(W;\;X_l,\;X_r) = \left\{\begin{array}{lcl}
WX_l\;, & & \textrm{ if } W\ge0 \\ \vspace*{-.12in} \\
WX_r\;, & & \textrm{ if } W<0
\end{array}\right.\;,
\end{equation}
where the subscripts $l$ and $r$ mean ``left'' and ``right'', respectively, and $W$ is the local advection velocity at the interval face between the two interval values $X_l$ and $X_r$.

In (\ref{eq:rev_fvm_flux}), the velocity variables $V_j^n$ and $U_j^n$, where $0\le j\le N_\eta$, are collocated at the interval face $\eta_j$.
Here the velocity $V$ is computed using the integral form of (\ref{eq:rev_nrm_eqn_v}):
\begin{subequations}\label{eq:rev_fvm_v}
\begin{align}
\label{eq:rev_fvm_v_0}
&V_0^n = 0\;,\\
\label{eq:rev_fvm_v_p}
&V_j^n = \frac{1}{\theta\eta_j^2}\sum_{k=1}^j\eta_{k-1/2}^2R^n(f_{k-1/2}^n+g_{k-1/2}^n+h_{k-1/2}^n) - \frac{1}{\theta}U_j^nM_j^n\;,\quad 1\le j\le N_{\eta}\;,
\end{align}
where $M_j$ is computed as the mean of surrounding interval-averaged values: $M_j=(M_{j-1/2}^n+M_{j+1/2}^n)/2$, except for the last node, in which case $M_{N_{\eta}}=M_{N_{\eta}-1/2}$.
\end{subequations}
As for the velocity $U$, we have $U_0^n=0$ and $U_j^n=\alpha(A_{j+1/2}^n-A_{j-1/2}^n)/(\Delta\eta R^n)$, where $A_{j-1/2}^n$ is the averaged chemoattractant concentration on $[\eta_{j-1},\;\eta_j]$.
The chemoattractant concentration is computed by approximating the convective term (\ref{eq:rev_nrm_eqn_a}) by straightforward finite volume discretization and the diffusion term by central difference approximation.
Because the only role of $A$ is to compute the nodal velocities $U_j^n$, the method we will propose later is independent of how $A$ is computed, as long as the nodal $U_j^n$ is computable; more details are provided in the next section.

\subsection{A simple case study}
\label{sec:rev_case}
Whether (\ref{eq:rev_inc}) can be maintained by the solutions to (\ref{eq:rev_eqn}) remains an open problem, since analytical approach to solve these equations remain difficult. 
Nevertheless, one may justify that (\ref{eq:rev_inc}) should be respected by adding (\ref{eq:rev_eqn_g}) to (\ref{eq:rev_eqn_m}) to obtain:
\begin{displaymath}
  \pp{\Theta}{t} + \frac{1}{r^2}\po{r}\left[r^2\Theta V + r^2MU\right] = f+g+h\;,
\end{displaymath}
where $\Theta = G+N+M$; and then incorporating (\ref{eq:rev_eqn_v}):
\begin{equation}\label{eq:rev_case_inc}
  \pp{\Theta}{t} + \frac{1}{r^2}\po{r}\left[r^2(\Theta-\theta)V\right] = 0\;.
\end{equation}
Clearly, the equation~(\ref{eq:rev_inc}), or equivalently $\Theta\equiv\theta$ is a solution to the latest equation. 

In this section, we consider a simple case whose parameters and initial/boundary conditions are given as follows:
\begin{itemize}
  \item Most diminishing rates are set to zero except for $\lambda$, which models the self-production of the glioma cells:
  \begin{displaymath}
    \lambda = 1.0\;,\quad
    \mu = \delta = \rho = 0.0\;.
  \end{displaymath}
  \item We normalize the cell number by setting $\theta=1.0$, and in the chemoattractant equation:
  \begin{displaymath}
    m = 30.0\;,\quad
    \beta = 1.0\;,\quad
    \gamma = 0.0\;,\quad
    \nu = 1.0\;,\quad
    \alpha = 1.0\;.
  \end{displaymath}
\item The initial radius is $R(0)=1$, and the initial cell numbers are:
  \begin{displaymath}
    G(r,0) = 0.5\;,\quad
    N(r,0) = 0.0\;,\quad
    M(r,0) = 0.5\;,
  \end{displaymath}
  for all $0\le r\le 1$ and the initial chemoattractant concentration is:
  \begin{displaymath}
    A(r,0) = \left\{\begin{array}{lcl}
        \frac{5}{3}-\frac{1}{6}r^2 & & 0\le r\le 1 \\ \vspace*{-.12in} \\
        \frac{3}{2}e^{-\frac{2}{9}(r-1)} & & r\ge 1
    \end{array}\right. 
  \end{displaymath}
  \item The boundary condition for $M$ is $M_{\bc}=0.5$.
\end{itemize}

The numerical method of Section~\ref{sec:rev_fvm} is used to solve this problem until $T=1.0$ with $N_\eta=50$ uniform intervals and fixed time step size $\Delta t=0.005$, which satisfies the Courant stability condition for all steps. 
The radius growth history and the final cell numbers are plotted in the left panel and the right panel of Figure~\ref{fg:rev_case_sol}, respectively.
\begin{figure}\centering
  \begin{subfigure}[b]{.48\textwidth}\centering
    \includegraphics[width=.8\textwidth]{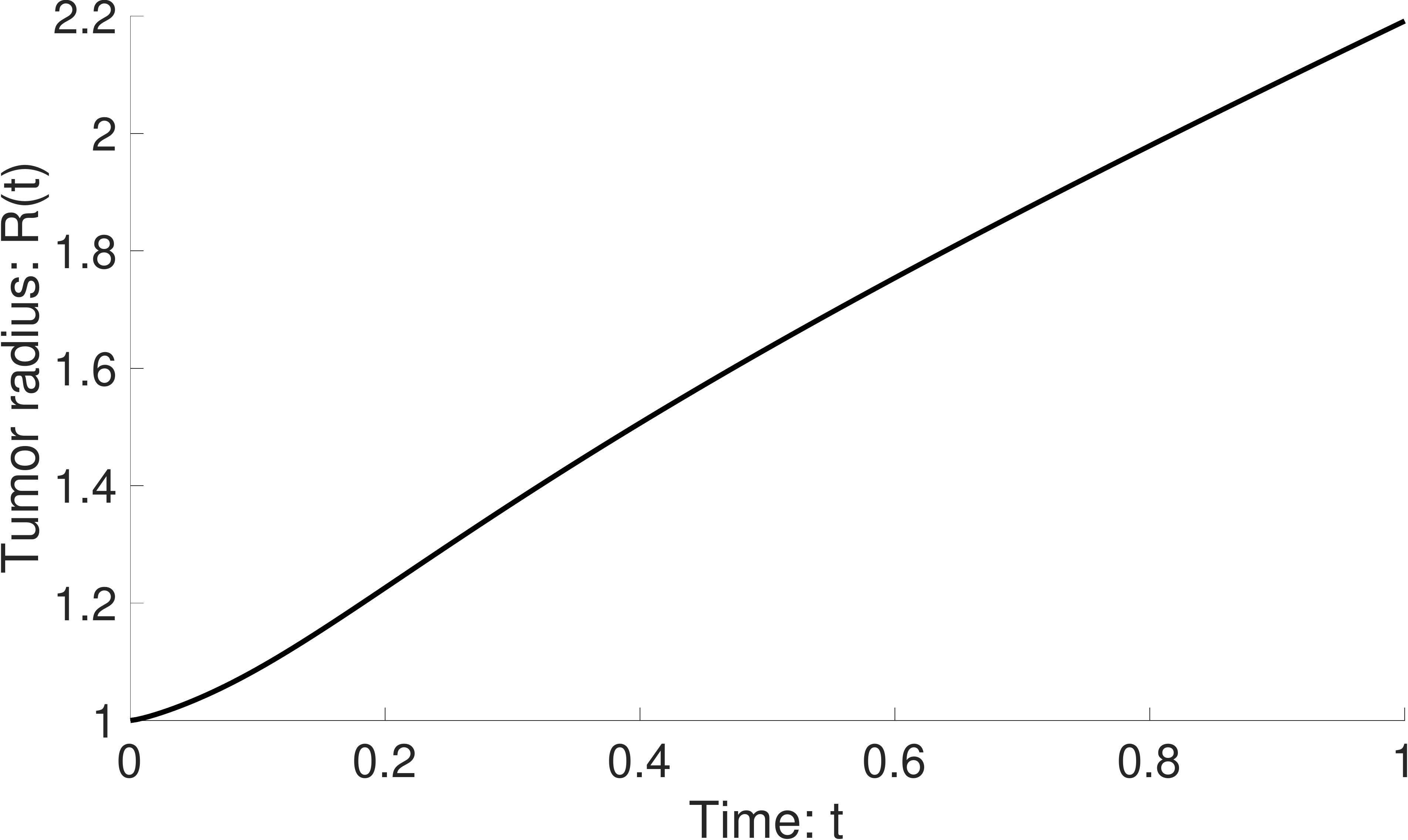}
    \caption{Radius growth history.}
    \label{fg:rev_case_sol_rad}
  \end{subfigure}
  \begin{subfigure}[b]{.48\textwidth}\centering
    \includegraphics[width=.8\textwidth]{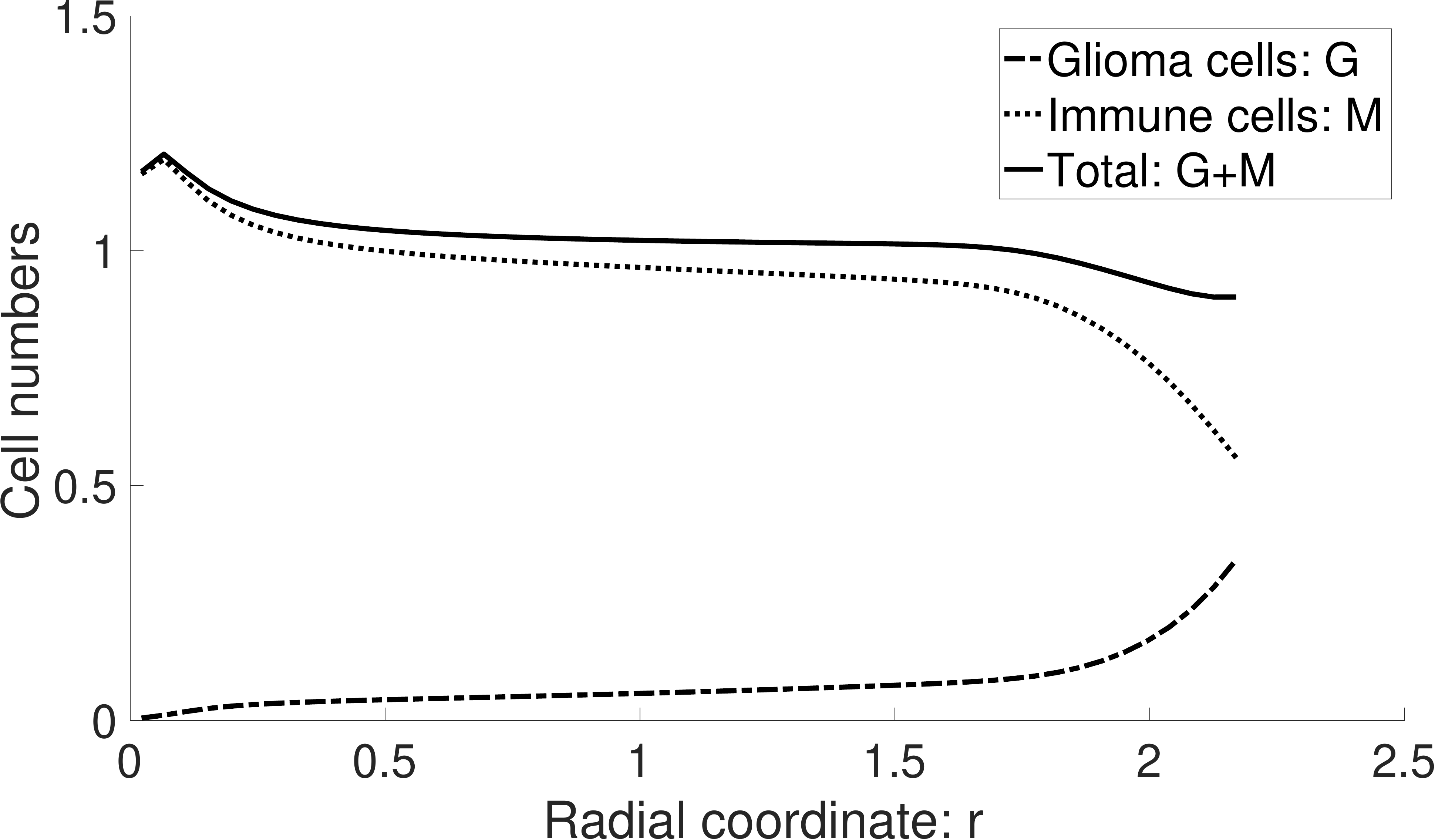}
    \caption{Cell numbers at $T=1.0$.}
    \label{fg:rev_case_sol_cell}
  \end{subfigure}
  \caption{Solutions to the tumor growth problem.}
  \label{fg:rev_case_sol}
\end{figure}

Note that for this problem, $N$ is always zero (and so is our numerical solutions), hence we clearly observe the violation of incompressibility in the numerical solutions at $T=1.0$, especially near the tumor center.
To make this point clearer, the $L_1\textrm{\sc-}$norm of $G+M-1$ is defined as:
\begin{equation}\label{eq:rev_case_l1}
  d_\theta(t^n) \eqdef \int_0^{R(t^n)}\abs{G(r,t^n)+M(r,t^n)-1}dr \approx \frac{R^n}{N_\eta}\sum_{j=1}^{N_\eta}\abs{G_{j-1/2}^n+M_{j-1/2}^n-1}\;,
\end{equation}
where $R^n\approx R(t^n)$ is the numerical solution of the radius at $t^n$.
The history of $d_\theta$ is provided in Figure~\ref{fg:rev_case_l1}, where we observe violation of the incompressibility constraint in increasing magnitude as $t$ grows.
\begin{figure}\centering
  \includegraphics[width=.384\textwidth]{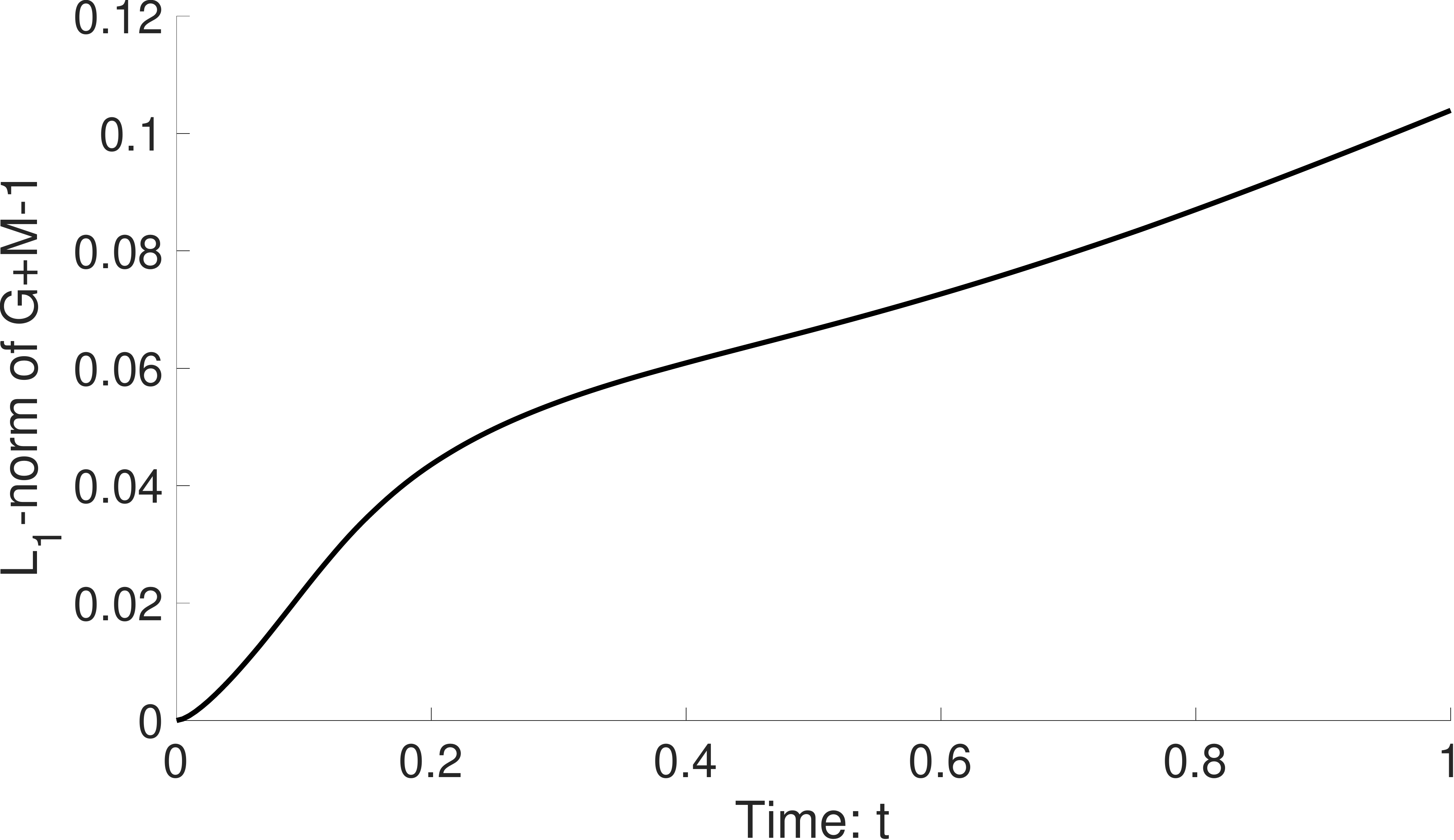}
  \caption{History of the incompressibility constraint violation index $d_\theta$.}
  \label{fg:rev_case_l1}
\end{figure}
In the rest of the paper, we try to address this issue and investigate enhancement of existing finite volume methods to improve the numerical results.

\section{A Model Problem and The Totality Conservation Law}
\label{sec:model}
To make the idea clear, we consider a simplified model instead of the original one.
First of all, only two cell species are considered, namely the glioma cells $G$ and the immune cells $M$.
Second, noticing that the chemoattractant $A$ is only used to compute the velocity field $U$, in this simplified model we treat $U$ as a given velocity field and denote it by $u$ since it is prescribed; $A$ is thusly ignored altogether.
To this end, the governing equations in spherical coordinate are given by:
\begin{subequations}\label{eq:model_pde}
\begin{align}
\label{eq:model_pde_g}
&\pp{G}{t} + \frac{1}{r^2}\po{r}\left[r^2GV\right] = f\;,\\
\label{eq:model_pde_m}
&\pp{M}{t} + \frac{1}{r^2}\po{r}\left[r^2M(V+u)\right] = h\;,\\
\label{eq:model_pde_v}
&\frac{1}{r^2}\po{r}\left[r^2(V+uM)\right] = f + h\;,\quad V(0,t) = 0\;;
\end{align}
where $0\le r\le R(t)$ and $0\le t\le T$; the radius function $R: [0,\;T]\mapsto\mathbb{R}^+$ satisfies:
\begin{equation}\label{eq:model_pde_r}
R'(t) = V(R(t),t)\;.
\end{equation}
As before, the lower case letters in (\ref{eq:model_pde}) represent prescribed functions:
\begin{equation}\label{eq:model_pde_presc}
  u = u(r,t,G,M)\;,\quad
  f = f(r,t,G,M)\;,\quad
  g = g(r,t,G,M)\;.
\end{equation}
We further require that $u=0$ at $r=0$ for all $t\in[0,\; T]$.
\end{subequations}

Note that in (\ref{eq:model_pde_v}) there is no $\theta$, comparing to the previous model; indeed, we have supposed that $\theta=1$ and require the initial condition to satisfy:
\begin{equation}\label{eq:model_ic}
G(r,0) + N(r,0) = 1\;,\quad
\forall\; 0\le r \le R(0)\;.
\end{equation}

Similar as before, we can define the total number $\Theta(r,t)=G(r,t)+H(r,t)$; then the incompressibility assumption requires $\Theta\equiv1$.
If this holds, we actually have a very convenient way to estimate the growth of the tumor.
In particular, let $C(t)$ denote the total number of cells in the tumor; then on the one hand the assumption $\Theta\equiv1$ indicates:
\begin{equation}\label{eq:model_tot}
  C(t) = \int_0^{R(t)}4\pi r^2\Theta(r,t)dr = \int_0^{R(t)}4\pi r^2dr = \frac{4}{3}\pi R(t)^3\;,
\end{equation}
hence the rate of change in $C(t)$ is:
\begin{equation}\label{eq:model_growth}
  C'(t) = 4\pi R'(t)R(t)^2\;.
\end{equation}
On the other hand, the only mechanism such that the new cells can enter the tumor is through the boundary condition for $M$ at $r=R(t)$:
\begin{equation}\label{eq:model_growth_alt}
  C'(t) = -4\pi R(t)^2u(R(t),t)\check{M}(t)\;,
\end{equation}
where $u$ is the prescribed infiltration velocity and $\check{M}(t)$ is the flow out of/into the tumor:
\begin{equation}\label{eq:model_growth_flow}
\check{M}(t) = \left\{\begin{array}{lcl}
M(R(t),t)\;, & & u(R(t),t)\ge0\;; \\
M_{\bc}(t)\;, & & u(R(t),t) < 0\;,
\end{array}\right.
\end{equation}
where as before $M_{\bc}(t)$ is the prescribed ambient number of immune cells.
Equating (\ref{eq:model_growth}) and (\ref{eq:model_growth_alt}), we obtain an ODE for $R(t)$:
\begin{equation}\label{eq:model_ode}
  R'(t) = -u(R(t),t)\check{M}(t)\;,
\end{equation}
which will help us design numerical tests for which the exact tumor growth curve can be calculated.

\subsection{The totality conservation law}
\label{sec:model_tclr}
Adding (\ref{eq:model_pde_g}) and (\ref{eq:model_pde_m}) then equating the right hand side with that of (\ref{eq:model_pde_v}), we obtain an analogy of (\ref{eq:rev_case_inc}):
\begin{displaymath}
  \pp{(r^2\Theta)}{t} + \po{r}\left[r^2\Theta V + r^2Mu\right] = r^2(f + g) = \po{r}\left[r^2(V+uM)\right]\;,
\end{displaymath}
or equivalently:
\begin{equation}\label{eq:model_tclr}
  \pp{(r^2\Theta)}{t} + \po{r}\left[r^2(\Theta-1)V\right] = 0\;,
\end{equation}
which admits the solution $\Theta(r,t)\equiv 1$ regardless of the other variables if the initial condition~(\ref{eq:model_ic}) holds.

If we replace one of $G$ and $M$ by their sum $\Theta$, an equivalent PDE system is obtained by replacing either (\ref{eq:model_pde_g}) or (\ref{eq:model_pde_m}) by (\ref{eq:model_tclr}) without changing the solutions.
Hence we expect the incompressibility constraint $G(r,t)+M(r,t) = \Theta(r,t) \equiv 1$ in the solutions of the original system of equations.

Because (\ref{eq:model_tclr}) describes the conservation of the sum of the two species, we call it the {\it totality conservation law} or {\it TCL} in the context of current work and expect the numerical method satisfies a discrete version to be specified later.

\subsection{The model, TCL, and GCL in normalized coordinate system}
\label{sec:model_norm}
Similar as before, after the coordinate transformation $(r,t)\mapsto(\eta,\tau) = (r/R(t),t)$, we obtain the model in the normalized coordinates:
\begin{subequations}\label{eq:model_norm_pde}
\begin{align}
\label{eq:model_norm_pde_g}
&\pp{(\eta^2R^2G)}{\tau} + \po{\eta}\left[\left(\frac{V}{R}-\frac{\eta R'}{R}\right)\eta^2R^2G\right] = \eta^2R^2f-\eta^2R'RG\;,\\
\label{eq:model_norm_pde_m}
&\pp{(\eta^2R^2M)}{\tau} + \po{\eta}\left[\left(\frac{V}{R}-\frac{\eta R'}{R}+\frac{u}{R}\right)\eta^2R^2M\right] = \eta^2R^2g-\eta^2R'RM\;,\\
\label{eq:model_norm_pde_v}
&\frac{1}{\eta^2}\po{\eta}\left[\eta^2\left(\frac{V}{R}+\frac{u}{R}M\right)\right] = f + g\;,\quad V(0,t) = 0\;;
\end{align}
the computational domain is $(\eta,\tau)\in[0,\; 1]\times[0,\; T]$ for some positive $T>0$; and $R: [0,\; T]\mapsto \mathbb{R}^+$ denotes the radious of the spherical domain, which satisfies:
\begin{equation}\label{eq:model_norm_pde_r}
R'(\tau) = V(1,\tau)\;.
\end{equation}
The lower case letters in (\ref{eq:model_norm_pde}) represent prescribed source terms.
\end{subequations}

Correspondingly, the equation~(\ref{eq:model_tclr}) is converted to:
\begin{displaymath}
\pp{(\eta^2R^2\Theta)}{\tau} - \frac{\eta R'}{R}\po{\eta}\left[\eta^2R^2\Theta\right] + \frac{1}{R}\po{\eta}\left[\eta^2R^2(\Theta-1)V\right] = 0\;,
\end{displaymath}
or equivalently:
\begin{equation}\label{eq:model_gcl_tcl}
  \pp{(\eta^2R^2\Theta)}{\tau} + \po{\eta}\left[\eta^2R(\Theta-1)V\right] - \po{\eta}\left[\eta^3R'R\Theta\right] = - \eta^2R'R\Theta\;,
\end{equation}
which is the TCL in the normalized coordinates.
In (\ref{eq:model_tclr}), both terms vanishes if we set $\Theta=1$; whereas in (\ref{eq:model_gcl_tcl}) setting $\Theta=1$ yields the identity:
\begin{equation}\label{eq:model_gcl_eqn}
  \pp{(\eta^2R^2)}{\tau} + \po{\eta}\left[-\eta^3R'R\right] = -\eta^2R'R\;.
\end{equation}
This equation only involve geometric quantities and it is rooted in using a mesh coordinate (normalized coordinate system) that is different from the physical one (the radial coordinates).
Similar identities are studied in other contexts, especially the arbitrary Lagrangian-Eulerian (ALE) methods, see for example~\cite{CFarhat:2001a, ALOrtega:2011a}, where it is called the {\it geometric conservation law} or {\it GCL}.
In this work, we follow this convention and call (\ref{eq:model_gcl_eqn}) the GCL for the free-boundary problem in radial coordinates.
At the continuous level, (\ref{eq:model_gcl_eqn}) holds naturally; but we will see in a moment that it may not hold at the discrete level.
Existing literature has demonstrated that violating GCL at the discrete level lead to unstable solutions; in this work, we thusly require the proposed method to satisfy a discrete version of GCL, called the discrete geometric conservation law (DGCL), which will also be specified in the next section.

\section{An Enhanced Finite Volume Method}
\label{sec:fvm}
Neither TCL nor GCL is automatically satisfied by classical finite volume discretizations.
For example to see why GCL could be violated, let us consider a numerical discretization of (\ref{eq:model_gcl_eqn}), so that GCL is satisfied discretely, and look at what this discretization may look like.
Using a mesh with $N_{\eta}$ uniform intervals and nodal velocities $V_j^n$ where $0\le j\le N_{\eta}$, if the straightforward forward Euler time-integrator is used (see for example, Section~\ref{sec:rev_fvm}), we have the following formula to update the solutions from $t^n$ to $t^{n+1}$:
\begin{subequations}\label{eq:fvm_case}
\begin{align}
\label{eq:fvm_case_gcl}
&\frac{1}{\Delta\tau^n}\left[\eta_{j-1/2}^2(R^{n+1})^2-\eta_{j-1/2}^2(R^n)^2\right] + R'^nR^n\mathcal{D}_{j-1/2}\left[-\eta^3\right] = -\eta_{j-1/2}^2R'^nR^n\;,\\
\label{eq:fvm_case_rp}
&R'^n=V_{N_{\eta}}^n\;,\\
\label{eq:fvm_case_r}
  &R^{n+1} = R^n+\Delta\tau^nV_{N_{\eta}}^n\;,
\end{align}
\end{subequations}
where (\ref{eq:fvm_case_gcl}) collocates at the interval center $\eta_{j-1/2}$ and $\mathcal{D}_{j-1/2}$ is the spatial discretization for $\partial_{\eta}$ at $\eta_{j-1/2}$ as a result of the finite volume discretizations of (\ref{eq:model_pde}).
Rearranging (\ref{eq:fvm_case_gcl}) there is:
\begin{displaymath}
  \mathcal{D}_{j-1/2}\left[-\eta^3\right] = -\eta_{j-1/2}^2 - \frac{\eta_{j-1/2}^2}{\Delta\tau^n}\frac{(R^n+\Delta\tau^nV_{N_{\eta}}^n)^2-(R^n)^2}{V_{N_{\eta}}^nR^n}
  = -\eta_{j-1/2}^2 - \eta_{j-1/2}^2\left[2+\frac{\Delta\tau^nV_{N_{\eta}}^n}{R^n}\right]\;.
\end{displaymath}
This is a highly undesirable property, because it means that when we apply the chosen numerical discretization $\mathcal{D}_{j-1/2}$ to a purely geometric quantity $-\eta^3$, the result needs to depend on the solutions of both $V$ and $R$.

An easy way to fix the issue is to make sure that the radius update satisfies:
\begin{equation}\label{eq:fvm_t_gcl}
  R'^n = \frac{(R^{n+1})^2-(R^n)^2}{2\Delta\tau^nR^n}\;,
\end{equation}
then (\ref{eq:fvm_case_gcl}) reduces to:
\begin{equation}\label{eq:fvm_s_gcl}
  \mathcal{D}_{j-1/2}(-\eta^3) = -3\eta_{j-1/2}^2\;,
\end{equation}
which is independent of $V$ and $R$ as desired.

For example, if one wish to update the radius as $R^{n+1} = R^n + \Delta\tau^n R'^n$, c.f., (\ref{eq:fvm_case_r}), then it requires $R'^n$ to be computed as $R'^n = V_{N_{\eta}}^n(1+(1/2)\Delta\tau^n V_{N_{\eta}}^n/R^n)$ rather than (\ref{eq:fvm_case_rp}).
In this paper, however, we propose to compute $R'^n$ as:
\begin{equation}\label{eq:fvm_t_gcl_rp}
  R'^n = \left(1-\frac{1}{4}\Delta\eta^2\right)^{-1}V_{N_{\eta}}^n\;,
\end{equation}
and then compute $R^{n+1}$ according to (\ref{eq:fvm_t_gcl}).
The motivation is to make sure that our numerical method is compatible with the no-flux biological condition at the moving boundary, see the discussion after the proof of (\ref{thm:fvm_gtcl}).

\medskip

The preceding case study indicates that we must design the time-integrator carefully; furthermore, the spatial discretization $\mathcal{D}_{j-1/2}$ needs to compute the derivative of third-degree polynomials exactly, as required by (\ref{eq:fvm_s_gcl}).

\medskip

The rest of this section focuses on constructing finite volume methods that satisfy both the GCL and TCL in a discrete sense, which is yet to be made precise. 
To this end, we follow the same notations as before and denote discrete cell numbers by $G_{j-1/2}^n$ and $M_{j-1/2}^n$, where $1\le j\le N_{\eta}$, and they represent:
\begin{align}
\label{eq:fvm_cell_g}
&G_{j-1/2}^n\approx\frac{1}{\Delta\eta\eta_{j-1/2}^2(R^n)^2}\int_{\eta_{j-1}}^{\eta_j}\eta^2(R^n)^2G(\eta,\tau^n)d\eta\;,\\
\label{eq:fvm_cell_m}
&M_{j-1/2}^n\approx\frac{1}{\Delta\eta\eta_{j-1/2}^2(R^n)^2}\int_{\eta_{j-1}}^{\eta_j}\eta^2(R^n)^2M(\eta,\tau^n)d\eta\;,\quad 1\le j\le N_{\eta}\;;
\end{align}
the discrete velocities are given by:
\begin{align}
\label{eq:fvm_node_v}
&V_j^n\approx V(\eta_j,\tau^n)\;,\quad 0\le j\le N_{\eta}\;,\\
\label{eq:fvm_node_u}
&u_j^n = u(\eta_j,\tau^n)\;,\quad 0\le j\le N_{\eta}\;,
\end{align}
where no special approximation is needed for $u$ since it can be evaluated explicitly, c.f.~(\ref{eq:model_pde_presc}).

The remainder of this section is organized as follows. 
A general finite volume formulation is provided in Section~\ref{sec:fvm_gen} and our main result is given in Section~\ref{sec:fvm_thm}, where both DGCL and TGCL are defined and sufficient conditions for numerical methods to satisfy these conditions are provided.
The subsequent sections then focus on various numerical fluxes that obey these conditions.

\subsection{A general finite volume formulation}
\label{sec:fvm_gen}
The explicit first-order time-accurate finite volume formulation of (\ref{eq:model_norm_pde_g}) and (\ref{eq:model_norm_pde_m}) is obtained by integrating these equations over each interval $[\eta_{j-1},\; \eta_j]$ and then discretizing the time-derivative by forward-Euler method:
\begin{align}
\label{eq:fvm_gen_g}
&\ \frac{\eta_{j-1/2}^2[(R^{n+1})^2G_{j-1/2}^{n+1}-(R^n)^2G_{j-1/2}^n]}{\Delta\tau^n} + \frac{1}{\Delta\eta}\left[F_j^{G,n}-F_{j-1}^{G,n}\right] \\
\notag
=&\ \eta_{j-1/2}^2(R^n)^2f_{j-1/2}^n-\eta_{j-1/2}^2R'^nR^nG_{j-1/2}^n\;, \\
\label{eq:fvm_gen_m}
&\ \frac{\eta_{j-1/2}^2[(R^{n+1})^2M_{j-1/2}^{n+1}-(R^n)^2M_{j-1/2}^n]}{\Delta\tau^n} + \frac{1}{\Delta\eta}\left[F_j^{M,n}-F_{j-1}^{M,n}\right] \\
\notag
=&\ \eta_{j-1/2}^2(R^n)^2g_{j-1/2}^n-\eta_{j-1/2}^2R'^nR^nM_{j-1/2}^n\;,\quad 1\le i\le N_{\eta}\;.
\end{align}
$F^{G,n}_j$ and $F^{M,n}_j$ are numerical fluxes for $G$ and $M$ at $\eta_j$, respectively:
\begin{align}
  \label{eq:fvm_gen_flux_g}
  &F^{G,n}_j \approx \left(\frac{V}{R}-\frac{\eta R'}{R}\right)\eta^2R^2G\Big|_{\eta=\eta_j,\;\tau=\tau^n}\;, \\
  \label{eq:fvm_gen_flux_m}
  &F^{M,n}_j \approx \left(\frac{V}{R}-\frac{\eta R'}{R}+\frac{u}{R}\right)\eta^2R^2M\Big|_{\eta=\eta_j,\;\tau=\tau^n}\;.
\end{align}
In Section~\ref{sec:rev_fvm}, the velocities $V/R-\eta R'/R$ and $V/R-\eta R'/R+u/R$ are used to compute the two fluxes $F^G_j$ and $F^M_j$, respectively.
For our problems, however, it is advantageous to consider each component of the velocity separately; namely, we segregate the numerical fluxes as:
\begin{align}
\label{eq:fvm_gen_flux_f_seg}
  F_j^{G,n} &= F_{V,j}^{G,n} + F_{R',j}^{G,n}\;, \\
\notag
  &F_{V,j}^{G,n}\approx V\eta^2RG\;,\ F_{R',j}^{G,n}\approx -\eta^3R'RG\;;\\
\label{eq:fvm_gen_flux_g_seg}
  F_j^{M,n} &= F_{V,j}^{M,n} + F_{R',j}^{M,n} + F_{u,j}^{M,n}\;, \\
\notag
  &F_{V,j}^{M,n}\approx V\eta^2RM\;,\ F_{R',j}^{M,n}\approx -\eta^3R'RM\;,\ F_{u,j}^{M,n}\approx u\eta^2RM\;.
\end{align}

The velocity equation is obtained similarly as before, but we keep the approximation to $u\eta^2R^2M$ as unspecified: 
\begin{equation}\label{eq:fvm_gen_v}
  \eta_j^2R^nV_j^n+\mathscr{F}_{u,j}^{M,n} = \sum_{k=1}^j\Delta\eta\left(\eta_{k-1/2}^2(R^n)^2f_{k-1/2}^n+\eta_{k-1/2}^2(R^n)^2g_{k-1/2}^n\right)\;.
\end{equation}
Here $\mathscr{F}_{u,j}^{M,n}$ approximates $u\eta^2R^2M$ at $(\eta_j,\;\tau^n)$; and the source terms on the right hand side are computed the same way as those in (\ref{eq:fvm_gen_g}) and (\ref{eq:fvm_gen_m}).
In Section~\ref{sec:rev_fvm}, $\mathscr{F}_{u,j}^{M,n}$ is approximated by averaging $M_{j-1/2}^n$ and $M_{j+1/2}^n$; as we will see soon, this is a good choice for our problem.

\subsection{Sufficient conditions for DTCL and DGCL}
\label{sec:fvm_thm}
It is fair to assume that we use the same flux function to compute the numerical fluxes associated with the same velocity, such as $F_{V,j}^{G,n}$ and $F_{V,j}^{M,n}$; to this end we suppose:
\begin{align}
  \label{eq:fvm_thm_flux_v}
  &F_{V,j}^{X,n} = \mathcal{F}_j^n(\{X_{j-1/2+k}^n\;:\ -l\le k\le r\},\;\mathscr{P})\;, \\
  \label{eq:fvm_thm_flux_r}
  &F_{R',j}^{X,n} = \hat{\mathcal{F}}_j^n(\{X_{j-1/2+k}^n\;:\ -l\le k\le r\},\;\hat{\mathscr{P}})\;, 
\end{align}
where $l\ge0$ and $r\ge1$ are fixed numbers denoting the stencil of the flux function, $X$ represents either species, and the parameter sets $\mathscr{P}$ and $\hat{\mathscr{P}}$ are placeholders for high-resolution fluxes that are described later.

We distinguish the flux functions $\mathcal{F}_j^n$ and $\hat{\mathcal{F}}_j^n$ because the former approximates the fluxes due to a spatially varying velocity $V$ whereas the latter can be interpreted as fluxes due to a spatially constant velocity $R'$;
furthermore, we maintain the subscript $j$ and the superscript $n$ in these generic functions to indicate their dependence on the spatial coordinates $\eta$, domain size $R^n$, as well as $R'^n$, which are determined independently from the finite volume discretizations.

For our next purpose, we note that both flux functions are in the form $\mathcal{F}(\{X_{j-1/2+k}^n\;:\ -l\le k\le r\},\;\cdots)$, where the omitted quantities represent the parameters that are the same when the flux function is applied to compute fluxes for different species, such as $G$ and $M$, respectively.

\begin{definition}\label{def:fvm_addflux}
  The flux function $\mathcal{F}(\{X_{j-1/2+k}^n\;:\ -l\le k\le r\},\;\cdots)$ is called {\it additive} if for all $X$, $Y$ and $Z=X+Y$:
  \begin{align}
    \notag
    &\mathcal{F}(\{X_{j-1/2+k}^n\;:\ -l\le k\le r\},\;\cdots) + 
    \mathcal{F}(\{Y_{j-1/2+k}^n\;:\ -l\le k\le r\},\;\cdots) \\
    \label{eq:fvm_addflux}
    =\ &\mathcal{F}(\;\{Z_{j-1/2+k}^n\;:\ -l\le k\le r\},\;\cdots)\;.
  \end{align}
  where the omitted inputs are kept the same in all the three function evaluations.
\end{definition}

Furthermore, we define the $V$-consistency for the flux function $\mathcal{F}_j^n$ of (\ref{eq:fvm_thm_flux_v}) and cubic-preserving for the flux function $\hat{\mathcal{F}}_j^n$ of (\ref{eq:fvm_thm_flux_r}) as follows.

\begin{definition}\label{def:fvm_vc}
  The numerical flux function $\mathcal{F}_j^n$ of (\ref{eq:fvm_thm_flux_v}) is {\it $V$-consistent} if for all $V_j^n$:
  \begin{equation}\label{eq:fvm_vc}
    F_{V,j}^{1,n} = \eta_j^2R^nV_j^n\;,
  \end{equation}
  that is, setting $X_{j-1/2+k}^n=1,\;\forall k$ in the right hand side of (\ref{eq:fvm_thm_flux_v}) yields $\eta_j^2R^nV_j^n$.
\end{definition}

\begin{definition}\label{def:fvm_cubic}
  The numerical flux function $\hat{\mathcal{F}}_j^n$ of (\ref{eq:fvm_thm_flux_r}) is {\it cubic-preserving} if 
  \begin{equation}\label{eq:fvm_cubic}
    \frac{1}{\Delta\eta}\left(F_{R',j}^{1,n} - F_{R',j-1}^{1,n}\right) = -3\eta_{j-1/2}^2R'^nR^n\;.
  \end{equation}
\end{definition}
Note that $\mathcal{F}_{R',j}^{X,n}$ can be treated as the flux for an advection equation with the spatially constant velocity $-R'^nR^n$ and convected variable $\eta^3X$, this will be our basis to construct a cubic-preserving flux function, see the further discussions in Section~\ref{sec:fvm_rflux}. 

\medskip

The purpose of this section is to derive sufficient conditions such that our method satisfies GCL and TCL discretely.
To this end, we have the following definitions:
\begin{definition}\label{def:fvm_gcl}
  The method given by (\ref{eq:fvm_gen_g}), (\ref{eq:fvm_gen_m}) and (\ref{eq:fvm_gen_v}) satisfies the DGCL provided that:
  Suppose $M_{j-1/2}^m+G_{j-1/2}^m=1$ for all $j$ and $m=n,n+1$, then we can derive (\ref{eq:fvm_gen_v}) from (\ref{eq:fvm_gen_g}) and (\ref{eq:fvm_gen_m}).
\end{definition}

\begin{definition}\label{def:fvm_tcl}
  The method given by (\ref{eq:fvm_gen_g}), (\ref{eq:fvm_gen_m}) and (\ref{eq:fvm_gen_v}) satisfies the DTCL if they lead to a conservative discretization of (\ref{eq:model_tclr}).
\end{definition}

\medskip

Now we state the main theorem that will eventually guide us in the construction of the enhanced numerical methods.
\begin{theorem}\label{thm:fvm_gtcl}
  The numerical method given by (\ref{eq:fvm_gen_g}), (\ref{eq:fvm_gen_m}) and (\ref{eq:fvm_gen_v}) satisfies both DGCL and DTCL if: (1) $\mathcal{F}_j^n$ is additive and $V$-consistent, (2) $\hat{\mathcal{F}}_j^n$ is additive and cubic-preserving, (3) $\mathscr{F}_{u,j}^{M,n}=F_{u,j}^{M,n}$, and (4) $R'^n$ equals the right hand side of (\ref{eq:fvm_t_gcl}).
\end{theorem}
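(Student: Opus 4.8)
The plan is to funnel everything through the auxiliary quantity $\Theta_{j-1/2}^n=G_{j-1/2}^n+M_{j-1/2}^n$, and to read DTCL and DGCL off of one and the same computation, namely the sum of the update formulas (\ref{eq:fvm_gen_g}) and (\ref{eq:fvm_gen_m}). First I would use the additivity of $\mathcal{F}_j^n$ and $\hat{\mathcal{F}}_j^n$ (Definition~\ref{def:fvm_addflux}) to collapse $F_{V,j}^{G,n}+F_{V,j}^{M,n}$ into $F_{V,j}^{\Theta,n}$ and $F_{R',j}^{G,n}+F_{R',j}^{M,n}$ into $F_{R',j}^{\Theta,n}$, so that adding (\ref{eq:fvm_gen_g}) and (\ref{eq:fvm_gen_m}) yields a single update for $\Theta_{j-1/2}^n$ whose flux difference contains only the three groups $F_{V,j}^{\Theta,n}$, $F_{R',j}^{\Theta,n}$, $F_{u,j}^{M,n}$, and whose right-hand side is $\eta_{j-1/2}^2(R^n)^2(f_{j-1/2}^n+g_{j-1/2}^n)-\eta_{j-1/2}^2R'^nR^n\Theta_{j-1/2}^n$.

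For DTCL I would eliminate the source sum by differencing the velocity relation (\ref{eq:fvm_gen_v}) between indices $j$ and $j-1$, which expresses $\Delta\eta\,\eta_{j-1/2}^2(R^n)^2(f_{j-1/2}^n+g_{j-1/2}^n)$ as $(\eta_j^2R^nV_j^n+\mathscr{F}_{u,j}^{M,n})-(\eta_{j-1}^2R^nV_{j-1}^n+\mathscr{F}_{u,j-1}^{M,n})$. Substituting this and invoking condition~(3), $\mathscr{F}_{u,j}^{M,n}=F_{u,j}^{M,n}$, the $u$-fluxes drop out entirely and the $\Theta$-update is left in the conservative form
\[
\frac{\eta_{j-1/2}^2[(R^{n+1})^2\Theta_{j-1/2}^{n+1}-(R^n)^2\Theta_{j-1/2}^n]}{\Delta\tau^n}+\frac{\mathcal{G}_j^{\Theta,n}-\mathcal{G}_{j-1}^{\Theta,n}}{\Delta\eta}=-\eta_{j-1/2}^2R'^nR^n\Theta_{j-1/2}^n,
\]
with $\mathcal{G}_j^{\Theta,n}=(F_{V,j}^{\Theta,n}-\eta_j^2R^nV_j^n)+F_{R',j}^{\Theta,n}$. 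Because $F_{V,j}^{\Theta,n}$ approximates $V\eta^2R\Theta$ while $\eta_j^2R^nV_j^n=F_{V,j}^{1,n}$ approximates $V\eta^2R$ by $V$-consistency (Definition~\ref{def:fvm_vc}), and $F_{R',j}^{\Theta,n}$ is consistent with $-\eta^3R'R\Theta$ by cubic-preservation (Definition~\ref{def:fvm_cubic}), the interface quantity $\mathcal{G}_j^{\Theta,n}$ is a consistent numerical flux for $\eta^2R(\Theta-1)V-\eta^3R'R\Theta$; hence the displayed scheme constitutes a conservative discretization of the normalized totality conservation law (\ref{eq:model_gcl_tcl}), which is exactly DTCL (Definition~\ref{def:fvm_tcl}).

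For DGCL I would instead impose $\Theta_{j-1/2}^n=\Theta_{j-1/2}^{n+1}=1$ for all $j$ and, crucially, not use (\ref{eq:fvm_gen_v}) — the point is to recover it. In the combined equation, additivity turns the fluxes into $F_{V,j}^{1,n}$, $F_{R',j}^{1,n}$, $F_{u,j}^{M,n}$; then $V$-consistency replaces $F_{V,j}^{1,n}$ by $\eta_j^2R^nV_j^n$, cubic-preservation gives $\Delta\eta^{-1}(F_{R',j}^{1,n}-F_{R',j-1}^{1,n})=-3\eta_{j-1/2}^2R'^nR^n$, and condition~(4) gives $(R^{n+1})^2-(R^n)^2=2\Delta\tau^nR'^nR^n$. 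Plugging these in, the purely geometric contributions cancel with coefficients $2-3+1=0$ — the discrete image of the GCL identity (\ref{eq:model_gcl_eqn}) — leaving
\[
\big(\eta_j^2R^nV_j^n+F_{u,j}^{M,n}\big)-\big(\eta_{j-1}^2R^nV_{j-1}^n+F_{u,j-1}^{M,n}\big)=\Delta\eta\,\eta_{j-1/2}^2(R^n)^2(f_{j-1/2}^n+g_{j-1/2}^n).
\]
Telescoping from $k=1$ to $j$, using $V_0^n=0$ and $F_{u,0}^{M,n}=0$ (because $\eta_0=0$ and $u(0,\cdot)=0$), and then condition~(3) once more, reproduces (\ref{eq:fvm_gen_v}) verbatim, which is DGCL (Definition~\ref{def:fvm_gcl}).

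The individual manipulations are elementary, so I expect the only genuine subtlety to be bookkeeping: tracking which hypothesis cancels which term, and in particular recognizing that the three geometric coefficients — $+2$ from differentiating $\eta^2R^2$ in time through the radius update, $-3$ from the cubic-preserving flux of $-\eta^3R'R$, and $+1$ from the geometric source $-\eta^2R'R$ — must sum to zero, which is precisely what singles out the choice of $R'^n$ in condition~(4) and (\ref{eq:fvm_t_gcl}). I would also want to verify carefully that the left-boundary flux $F_{u,0}^{M,n}$ vanishes, so that the telescoped velocity identity is (\ref{eq:fvm_gen_v}) itself rather than (\ref{eq:fvm_gen_v}) offset by a nonzero constant; this is where the standing requirement $u(0,\cdot)=0$ from (\ref{eq:model_pde}) is used.
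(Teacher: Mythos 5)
Your proposal is correct and follows essentially the same route as the paper: sum the two species updates, use additivity to collapse the fluxes onto $\Theta$, cancel the $u$-fluxes via condition (3) and the differenced velocity relation to get the conservative $\Theta$-scheme (DTCL), then set $\Theta\equiv1$ and invoke $V$-consistency, cubic-preservation, and the radius-update condition to handle DGCL. The only difference is presentational: for DGCL you deliberately avoid invoking (\ref{eq:fvm_gen_v}) and instead recover it by telescoping with $V_0^n=0$ and $F_{u,0}^{M,n}=0$, which realizes Definition~\ref{def:fvm_gcl} slightly more directly than the paper's reduction to a residual identity, but the substance is identical.
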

\begin{proof}
Adding (\ref{eq:fvm_gen_g}) and (\ref{eq:fvm_gen_m}) then incorporating (\ref{eq:fvm_gen_v}), we have:
\begin{align}
  \notag
  &\ \frac{\eta_{j-1/2}^2}{\Delta\tau^n}\left[(R^{n+1})^2(G_{j-1/2}^{n+1}+M_{j-1/2}^{n+1})-(R^n)^2(G_{j-1/2}^n+M_{j-1/2}^n)\right] + \frac{1}{\Delta\eta}\left(F_{u,j}^{M,n}-F_{u,j-1}^{M,n}\right) \\
  \notag 
  &\ \frac{1}{\Delta\eta}\left(F_{V,j}^{G,n}+F_{V,j}^{M,n}-F_{V,j-1}^{G,n}-F_{V,j-1}^{M,n}\right) + \frac{1}{\Delta\eta}\left(F_{R',j}^{G,n}+F_{R',j}^{M,n}-F_{R',j-1}^{G,n}-F_{R',j-1}^{M,n}\right) \\
  \label{eq:fvm_thm_gtcl_1}
  &=\ \eta_{j-1/2}^2(R^n)^2f_{j-1/2}^n + \eta_{j-1/2}^2(R^n)^2g_{j-1/2}^n - \eta_{j-1/2}^2R'^nR^n(G_{j-1/2}^n+M_{j-1/2}^n) \\
  \notag
  &=\ \frac{R^n(\eta_j^2V_j^n-\eta_{j-1}^2V_{j-1}^n)}{\Delta\eta} + \frac{1}{\Delta\eta}\left(\mathscr{F}_{u,j}^{M,n}-\mathscr{F}_{u,j-1}^{M,n}\right)-\eta_{j-1/2}^2R'^nR^n(G_{j-1/2}^n+M_{j-1/2}^n)\;.
\end{align}
Define $\Theta_{j-1/2}^n = G_{j-1/2}^n + M_{j-1/2}^n$ and $\Theta_{j-1/2}^{n+1} = G_{j-1/2}^{n+1} + M_{j-1/2}^{n+1}$ as before; following the additivity of the fluxes $\mathcal{F}_j^n$ and $\hat{\mathcal{F}}_j^n$ we obtain:
\begin{displaymath}
  F_{V,j}^{G,n} + F_{V,j}^{M,n} = F_{V,j}^{\Theta,n}\quad\textrm{ and }\quad
  F_{R',j}^{G,n} + F_{R',j}^{M,n} = F_{R',j}^{\Theta,n}\;.
\end{displaymath}
Invoking in addition the assumption that $\mathscr{F}_{u,j}^{M,n}=F_{u,j}^{M,n}$, we obtain from (\ref{eq:fvm_thm_gtcl_1}):
\begin{align}
  \label{eq:fvm_thm_gtcl_2}
  &\ \frac{\eta_{j-1/2}^2}{\Delta\tau^n}\left[(R^{n+1})^2\Theta_{j-1/2}^{n+1}-(R^n)^2\Theta_{j-1/2}^n\right] + \frac{1}{\Delta\eta}\left(F_{V,j}^{\Theta,n}-F_{V,j-1}^{\Theta,n}\right) +\frac{1}{\Delta\eta}\left(F_{R',j}^{\Theta,n}-F_{R',j-1}^{\Theta,n}\right) \\
  \notag
  =&\ \frac{R^n(\eta_j^2V_j^n-\eta_{j-1}^2V_{j-1}^n)}{\Delta\eta} - \eta_{j-1/2}^2R'^nR^n\Theta_{j-1/2}^n\;.
\end{align}
Clearly (\ref{eq:fvm_thm_gtcl_2}) represents a conservative finite volume discretization of the continuous totality conservation law~(\ref{eq:model_tclr}) using the same flux functions $\mathcal{F}_j^n$ and $\hat{\mathcal{F}}_j^n$; hence the method satisfies DTCL.

\smallskip

Now we move on to show DGCL and to this end assume $\Theta_j^n\equiv1$ and $\Theta_j^{n+1}\equiv1$, then (\ref{eq:fvm_thm_gtcl_2}) reduce to:
\begin{align}
  \label{eq:fvm_thm_gtcl_3}
  &\ \frac{\eta_{j-1/2}^2}{\Delta\tau^n}\left[(R^{n+1})^2-(R^n)^2\right] + \frac{1}{\Delta\eta}\left(F_{V,j}^{1,n}-F_{V,j-1}^{1,n}\right) +\frac{1}{\Delta\eta}\left(F_{R',j}^{1,n}-F_{R',j-1}^{1,n}\right) \\
  \notag
  =&\ \frac{R^n(\eta_j^2V_j^n-\eta_{j-1}^2V_{j-1}^n)}{\Delta\eta} - \eta_{j-1/2}^2R'^nR^n\;.
\end{align}
Since $R'^n=((R^{n+1})^2-(R^n)^2)/(2\Delta\tau^nR^n)$, (\ref{eq:fvm_thm_gtcl_3}) is equivalent to:
\begin{equation}
  \frac{1}{\Delta\eta}\left(F_{V,j}^{1,n}-F_{V,j-1}^{1,n}\right) +\frac{1}{\Delta\eta}\left(F_{R',j}^{1,n}-F_{R',j-1}^{1,n}\right) 
  = \frac{R^n(\eta_j^2V_j^n-\eta_{j-1}^2V_{j-1}^n)}{\Delta\eta} - 3\eta_{j-1/2}^2R'^nR^n\;.
\end{equation}
This equality is trivial to prove following the $V$-consistency of $\mathcal{F}_j^n$ and the cubic-preserving of $\hat{\mathcal{F}}_j^n$.
Hence we conclude that given all the assumptions as stated, and that $M_j^m+G_j^m=\Theta_j^m=1,\;\forall j$ and $m=n,n+1$, (\ref{eq:fvm_gen_g}) and (\ref{eq:fvm_gen_m}) gives rise to (\ref{eq:fvm_gen_v}).
Thus the method satisfies DGCL.
\end{proof}
In the theorem and its proof, we only considered the radius update condition~(\ref{eq:fvm_t_gcl}).
On the one hand, the theorem only requires $R^n$, $R'^n$, and $R^{n+1}$ to be related by (\ref{eq:fvm_t_gcl}); and it does not pose any restriction on how $R'^n$ is to be computed.
On the other hand, biologically people do not expect any $G$ to flow across the moving boundary, which translates to:
\begin{equation}\label{eq:fvm_thm_noflux_end}
F_{V,N_{\eta}}^{1,n} + F_{R',N_{\eta}}^{1,n} = 0\;,
\end{equation}
and no geometrical flux at $\eta=0$:
\begin{equation}\label{eq:fvm_thm_noflux_orig}
F_{R',0}^{1,n} = 0\;.
\end{equation}
However, the $V$-consistency condition requires that:
\begin{displaymath}
  F_{V,N_{\eta}}^{1,n} = \eta_{N_{\eta}}^2 R^n V_{N_{\eta}}^n = R^n V_{N_{\eta}}^n\;,
\end{displaymath}
and incorporating~(\ref{eq:fvm_thm_noflux_orig}), the cubic-preserving condition requires:
\begin{displaymath}
  F_{R',N_{\eta}}^{1,n} = F_{R',0}^{1,n} + \sum_{j=1}^{N_{\eta}}(F_{R',j}^{1,n}-F_{R',j-1}^{1,n}) = -\Delta\eta\sum_{j=1}^{N_{\eta}}3\eta_{j-1/2}^2R'^nR^n = \left(1-\frac{1}{4}\Delta\eta^2\right)R'^nR^n\;.
\end{displaymath}
Hence the no-flux condition~(\ref{eq:fvm_thm_noflux_end}) indicates $V_{N_{\eta}}^n = \left(1-\frac{1}{4}\Delta\eta^2\right)R'^nR^n$, or equivalently~(\ref{eq:fvm_t_gcl_rp}) as proposed before.

\subsection{A review of the conventional flux functions}
\label{sec:fvm_conv}
We briefly review the conventional finite volume method in the context of (\ref{eq:model_norm_pde}); particularly we consider the spherically symmetric conservation law for a generic species $X$ in spherical coordinates and radial advective velocity $W$:
\begin{equation}\label{eq:fvm_conv_eqn}
\pp{(\eta^2R^2X)}{\tau} + \po{\eta}\left[W(\eta^2R^2X)\right] = 0\;,
\end{equation}
where we omitted any source terms on the right hand side since their approximation is generally independent of the finite volume discretizations.

The conservative variable of (\ref{eq:fvm_conv_eqn}) is $\tilde{X}\eqdef\eta^2R^2X$ rather than $X$, particularly the variable for the interval $[\eta_{j-1},\;\eta_j]$ is $\tilde{X}_{j-1/2} = \eta_{j-1/2}^2R^2X_{j-1/2}$. 
Hence (\ref{eq:fvm_conv_eqn}) is simply the conservative advection equation for $\tilde{X}$ by the velocity $W$:
\begin{equation}\label{eq:fvm_conv_prob}
\pp{\tilde{X}}{\tau} + \po{\eta}\left(W\tilde{X}\right) = 0\;,
\end{equation}
whose finite volume discretization (at the semi-discretized level) reads:
\begin{displaymath}
\frac{d\tilde{X}_{j-1/2}}{d\tau} + \frac{1}{\Delta\eta}\left(F_j-F_{j-1}\right) = 0\;,
\end{displaymath}
where $F_j\approx W\tilde{X}\big|_{\eta=\eta_j}$ and $F_{j-1}\approx W\tilde{W}\big|_{\eta=\eta_{j-1}}$.

If the conventional first-order upwind flux is used (see Section~\ref{sec:rev_fvm}), there is:
\begin{equation}\label{eq:fvm_conv_1st}
  F_j = \mathcal{F}^{\upw}(W_j;\;\tilde{X}_{j-1/2},\;\tilde{X}_{j+1/2})\;,
\end{equation}
where $W_j$ is the nodal velocity at $\eta_j$ and $\mathcal{F}^{\upw}$ is given by (\ref{eq:rev_fvm_upw}).

Extension to higher accuracy is achieved by the limited polynomial reconstruction.
One of the most widely used second-order extension is given by the high-resolution MUSCL method~\cite{BvanLeer:1979a}:
\begin{equation}\label{eq:fvm_conv_2nd}
F_j = \mathcal{F}^{\muscl}(W_j;\;\tilde{X}_{j-3/2},\;\tilde{X}_{j-1/2},\;\tilde{X}_{j+1/2},\;\tilde{X}_{j+3/2},\;\phi_{j-1/2},\;\phi_{j+1/2})\;,
\end{equation}
where $\phi_{j-1/2}$ and $\phi_{j+1/2}$ are slope limiters and the MUSCL flux function is:
\begin{align}
\label{eq:fvm_conv_muscl}
&\ \mathcal{F}^{\muscl}(W_j;\;Z_{j-3/2},\;Z_{j-1/2},\;Z_{j+1/2},\;Z_{j+3/2},\;\phi_{j-1/2},\;\phi_{j+1/2}) \\
\notag
\eqdef&\ \mathcal{F}^{\upw}\left(W_j;\;Z_{j-1/2}+\frac{1}{2}\phi_{j-1/2}\Delta Z_j,\;Z_{j+1/2}-\frac{1}{2}\phi_{j+1/2}\Delta Z_{j+1}\right)\;,
\end{align}
where $\Delta Z_k = Z_{k+1/2}-Z_{k-1/2}$, $k=j, j+1$ and $Z$ is a generic variable that equals $\tilde{X}$ in the case of (\ref{eq:fvm_conv_2nd}).
The slope limiter $\phi_{j-1/2}\in[0,\; 1]$ usually depends on the solutions, but only weakly in the following sense.
Slope limiters are introduced to reduce the magnitude of the slope such that the reconstruction will not create any new local extremum -- a property called monotone preserving.
Hence if setting $\phi_{j-1/2} = c$ satisfies the monotone preserving property for some particular value $c$, so is all slope limiters $\phi_{j-1/2}\in[0,\;c]$.
For this reason the slope limiters are introduced as free (or more precisely semi-free) parameters.

The bounds for slope limiters are nonlinear functions of the discrete solutions, for example, the minmod limiter computes:
\begin{equation}\label{eq:fvm_conv_minmod}
  \phi_{j-1/2} = \varphi^{\minmod}(\Delta Z_{j-1},\;\Delta Z_j) = \left\{\begin{array}{lcl}
      0\;, & & \Delta Z_{j-1}\Delta Z_j\le 0\;, \\ \vspace{-.1in} \\
      \min\left(\frac{\Delta Z_{j-1}}{\Delta Z_j},\; 1\right)\;, & & \Delta Z_{j-1}\Delta Z_j > 0\;.
    \end{array}\right.
\end{equation}
Other widely used limiter functions can be found in~\cite{KWMorton:1987a, RLeVeque:2002a, XZeng:2016a}.

\smallskip

Meanwhile, we show that these conventional flux functions are neither $V$-consistent nor cubic-preserving.
The latter is easy to verify; indeed, the upwind flux is only first-order accurate and the MUSCL flux is at most second-order; whereas cubic-preserving requires a third-order flux for advection equations.

Let us focus on the $V$-consistency and consider, for example, the upwind flux $\mathcal{F}^{\upw}$.
Then $V$-consistency requires that $\mathcal{F}^{\upw}(V_j/R;\;\eta_{j-1/2}^2R^2,\;\eta_{j+1/2}^2R^2) = \eta_j^2RV_j$; however, this equality does not hold either when $V_j\ge0$, in which case according to (\ref{eq:rev_fvm_upw}):
\begin{displaymath}
  \mathcal{F}^{\upw}\left(\frac{V_j}{R};\;\eta_{j-1/2}^2R^2,\;\eta_{j+1/2}^2R^2\right) = \eta_{j-1/2}^2RV_j \ne \eta_j^2RV_j\;;
\end{displaymath}
or when $V_j>0$, in which case: 
\begin{displaymath}
  \mathcal{F}^{\upw}\left(\frac{V_j}{R};\;\eta_{j-1/2}^2R^2,\;\eta_{j+1/2}^2R^2\right) = \eta_{j+1/2}^2RV_j \ne \eta_j^2RV_j\;.
\end{displaymath}

In the next sub-sections, we focus on designing numerical methods such that they lead to a method that satisfies both DGCL and DTCL, following the results of Section~\ref{sec:fvm_thm}.

\subsection{Modified fluxes: Part I}
\label{sec:fvm_vflux}
In this section, we construct $V$-consistent fluxes $\mathcal{F}_j^n$ by modifying the conventional upwind or MUSCL fluxes; in the latter case a synchronized limiter is introduced to ensure the additivity property as required by Theorem~\ref{thm:fvm_gtcl}.
The fluxes $F_{V,j}^{X,n}$ and $F_{u,j}^{X,n}$ will subsequently be constructed accordingly.

\smallskip

To construct a $V$-consistent flux $\mathcal{F}_j^n$, instead of applying the conventional flux functions to the conservative variables $\tilde{X}$, we consider the primitive ones $X$.
Particularly, a first-order upwind method for (\ref{eq:fvm_thm_flux_v}) can be constructed by setting $l=0$, $r=1$, and $\mathcal{P}=\emptyset$:
\begin{equation}\label{eq:fvm_vflux_v_upw}
  \mathcal{F}_j^n(\{X_{j-1/2}^n,\;X_{j+1/2}^n\}) = \eta_j^2(R^n)^2\mathcal{F}^{\upw}\left(\frac{V_j^n}{R^n};\;X_{j-1/2}^n,\;X_{j+1/2}^n\right)\;.
\end{equation}
Because $\mathcal{F}^{\upw}(V_j^n/R^n;\;1,\;1) \equiv V_j^n/R^n$, the flux (\ref{eq:fvm_vflux_v_upw}) is $V$-consistent.

Similarly, extension to higher-order accuracy can make use of the MUSCL flux~(\ref{eq:fvm_conv_muscl}):
\begin{align}
  \label{eq:fvm_vflux_v_muscl}
  &\ \mathcal{F}_j^n(\{X_{j-3/2}^n,\;X_{j-1/2}^n,\;X_{j+1/2}^n,\;X_{j+3/2}^n\},\{\phi_{j-1/2}^{X,n},\;\phi_{j+1/2}^{X,n}\}) \\
  \notag
  =&\ \eta_j^2(R^n)^2\mathcal{F}^{\muscl}\left(\frac{V_j^n}{R^n};\;X_{j-3/2}^n,\;X_{j-1/2}^n,\;X_{j+1/2}^n,\;X_{j+3/2}^n\;,\phi_{j-1/2}^{X,n},\;\phi_{j+1/2}^{X,n}\right)\;,\\
  \label{eq:fvm_vflux_muscl_lim}
  &\ \phi_{k-1/2}^{X,n} = \varphi^{\minmod}(\Delta X_{k-1}^n,\;\Delta X_k^n)\;,\quad k=j,j+1\;.
\end{align}
Here $\mathcal{P} = \{\phi_{j-1/2}^{X,n},\;\phi_{j+1/2}^{X,n}\}$ and the minmod limiter can be replaced by any other limiter of choice.
It is not difficult to verify that if $X_{k-1/2}^n\equiv1$, the MUSCL flux $\mathcal{F}^{\muscl}$ gives rise to $V_j^n/R^n$ regardless of the values of the limiters; hence the flux function~(\ref{eq:fvm_vflux_v_muscl}) is $V$-consistent, no matter what limiter we will choose.

\medskip

Next the additivity of these fluxes is considered, which is essentially requiring that the fluxes are linear in the inputs $X_{j-1/2}$.
Hence the upwind fluxes are by nature additive; for example let us consider $\mathcal{F}_j^n$ given by (\ref{eq:fvm_vflux_v_upw}) and suppose $V_j^n\ge0$, then:
\begin{align*}
  &\ F_{V,j}^{G,n} + F_{V,j}^{M,n} = \mathcal{F}_j^n(\{G_{j-1/2}^n,\;G_{j+1/2}^n\}) + \mathcal{F}_j^n(\{M_{j-1/2}^n,\;M_{j+1/2}^n\}) \\
  =&\ \eta_j^2(R^n)^2\mathcal{F}^{\upw}\left(\frac{V_j^n}{R^n},\;G_{j-1/2}^n,\;G_{j+1/2}^n\right) + 
    \eta_j^2(R^n)^2\mathcal{F}^{\upw}\left(\frac{V_j^n}{R^n},\;M_{j-1/2}^n,\;M_{j+1/2}^n\right) \\
    =&\ \eta_j^2(R^n)^2\cdot\frac{V_j^n}{R^n}G_{j-1/2}^n + \eta_j^2(R^n)^2\cdot\frac{V_j^n}{R^n}M_{j-1/2}^n = \eta_j^2(R^n)^2\cdot\frac{V_j^n}{R^n}\Theta_{j-1/2}^n \\
    =&\ \eta_j^2(R^n)^2\mathcal{F}^{\upw}\left(\frac{V_j^n}{R^n},\;\Theta_{j-1/2}^n,\;\Theta_{j+1/2}^n\right) = F_{V,j}^{\Theta,n}\;.
\end{align*}
The argument for the case $V_j^n<0$ is similar; hence $\mathcal{F}_j^n$ is additive.

Extension to the MUSCL-based fluxes~(\ref{eq:fvm_vflux_v_muscl}) is not straightforward, as the limiter function is generally nonlinear.
Following the discussion below Equation~(\ref{eq:fvm_conv_muscl}), we can circumvent this difficulty by synchronizing the limiters for $G$ and $M$, that is:
\begin{align}
  \label{eq:fvm_vflux_vg_muscl_add}
  F_{V,j}^{G,n} &= \eta_j^2(R^n)^2\mathcal{F}^{\muscl}\left(\frac{V_j^n}{R^n},\;G_{j-3/2}^n,\;G_{j-1/2}^n,\;G_{j+1/2}^n,\;G_{j+3/2}^n,\;\phi_{j-1/2}^n,\;\phi_{j+1/2}^n\right) \\
  \label{eq:fvm_vflux_vn_muscl_add}
  F_{V,j}^{M,n} &= \eta_j^2(R^n)^2\mathcal{F}^{\muscl}\left(\frac{V_j^n}{R^n},\;M_{j-3/2}^n,\;M_{j-1/2}^n,\;M_{j+1/2}^n,\;M_{j+3/2}^n,\;\phi_{j-1/2}^n,\;\phi_{j+1/2}^n\right) \\
  \label{eq:fvm_vflux_v_muscl_lim}
  &\textrm{ where }\  
  \phi_{k-1/2}^n = \min\left(\phi_{k-1/2}^{G,n},\;\phi_{k-1/2}^{M,n}\right),\quad k=j,j+1\;,
\end{align}
here $\phi_{k-1/2}^{G,n}$ and $\phi_{k-1/2}^{M,n}$ are obtained by applying (\ref{eq:fvm_vflux_muscl_lim}) to $X=G$ and $X=M$, respectively.
Note that the same limiters are used to compute the two fluxes.
To show the additivity, we assume again without loss of generality that $V_j^n\ge0$, then:
\begin{align*}
  &F_{V,j}^{G,n} = \eta_j^2(R^n)^2\cdot\frac{V_j^n}{R^n}\left(G_{j-1/2}^n+\frac{1}{2}\phi_{j-1/2}^n(G_{j+1/2}^n-G_{j-1/2}^n)\right),\; \\
  &F_{V,j}^{M,n} = \eta_j^2(R^n)^2\cdot\frac{V_j^n}{R^n}\left(M_{j-1/2}^n+\frac{1}{2}\phi_{j-1/2}^n(M_{j+1/2}^n-M_{j-1/2}^n)\right) \\
  \Rightarrow\ &F_{V,j}^{G,n} + F_{V,j}^{M,n} = \eta_j^2(R^n)^2\cdot\frac{V_j^n}{R^n}\left(\Theta_{j-1/2}^n+\frac{1}{2}\phi_{j-1/2}^n(\Theta_{j+1/2}^n-\Theta_{j-1/2}^n)\right) \\
  &\qquad = \eta_j^2(R^n)^2\mathcal{F}^{\muscl}\left(\frac{V_j^n}{R^n};\;\Theta_{j-3/2}^n,\;\Theta_{j-1/2}^n,\;\Theta_{j+1/2}^n,\;\Theta_{j+3/2}^n,\;\phi_{j-1/2}^n,\;\phi_{j+1/2}^n\right)\;.
\end{align*}
The latest flux is in general not a monotone flux for $\Theta$, since the selected limiter may be too large.
Nevertheless, this is not an issue since $\Theta$ is not our numerical solution; and as long as $G$ and $M$ are computed using monotone fluxes, we won't run into stability issues.
Nevertheless, if one wishes to ensure monotone flux for $\Theta$ as well, all that needs to be done is to compute the limiter $\phi_{k-1/2}^{\Theta,n}$ by applying (\ref{eq:fvm_vflux_muscl_lim}) to $X=G+N$, and include this $\phi_{k-1/2}^{\Theta,n}$ in the minimum of the right hand side of (\ref{eq:fvm_vflux_v_muscl_lim}).

\medskip

Finally, we compute the $u$-fluxes for $M$ by:
\begin{equation}\label{eq:fvm_vflux_u_upw}
  F_{u,j}^{M,n} = \eta_j^2(R^n)^2\mathcal{F}^{\upw}\left(\frac{u_j^n}{R^n};\;M_{j-1/2}^n,\;M_{j+1/2}^n\right)
\end{equation}
for first-order accuracy or:
\begin{equation}\label{eq:fvm_vflux_u_muscl}
  F_{u,j}^{M,n} = \eta_j^2(R^n)^2\mathcal{F}^{\muscl}\left(\frac{u_j^n}{R^n};\;M_{j-3/2}^n,\;M_{j-1/2}^n,\;M_{j+1/2}^n,\;M_{j+3/2}^n,\;\phi_{j-1/2}^n,\;\phi_{j+1/2}^n\right)
\end{equation}
for second-order accuracy, where the limiters are the same ones computed by (\ref{eq:fvm_vflux_v_muscl_lim}).

\subsection{Modified fluxes: Part II}
\label{sec:fvm_rflux}
To construct a cubic-preserving flux $F_{R',j}^{X,n}$, however, we cannot follow the same strategy as in the previous section. 
Indeed, if this flux is defined as:
\begin{displaymath}
  F_{R',j}^{X,n} = \eta_j^2(R^n)^2\mathcal{F}^{\upw}\left(-\frac{\eta_jR'^n}{R^n},\;X_{j-1/2}^n,\;X_{j+1/2}^n\right)\;,
\end{displaymath}
supposing $R'^n\ge0$ and setting $X_{k-1/2}^n\equiv1$ we have:
\begin{align*}
  &\ F_{R',j-1}^{1,n} = -\eta_{j-1}^3R'^nR^n\;,\quad
  F_{R',j}^{1,n} = -\eta_j^3R'^nR^n \\
  \Rightarrow&\ \frac{1}{\Delta\eta}\left(F_{R',j}^{1,n}-F_{R',j-1}^{1,n}\right) = - \left(3\eta_{j-1/2}^2+\frac{1}{4}\Delta\eta^2\right)R'^nR^n\;,
\end{align*}
which is different from $-3\eta_{j-1/2}^2R'^nR^n$, as required by the cubic-preserving property.

To proceed, we recognize that a higher-order and nonlinearly stable flux can be obtained by a polynomial reconstruction of the solutions on each interval such that the total variation does not increase, and apply the upwind flux to the two reconstructed values on both sides of the interval face.
In the MUSCL scheme, the reconstruction is achieved by limiting the slope of a linear function that preserves the interval average; in this section, we adopt the average-preserving and monotone cubic reconstruction of the Piecewise Parabolic Method (PPM)~\cite{PColella:1984a}, but construct the flux differently.

Let $X$ be a generic variable as before, the PPM reconstruction in normalized coordinates $\xi=(\eta-\eta_{j-1})/\Delta\eta$ on the interval $[\eta_{j-1},\;\eta_j]$ reads:
\begin{align}
\label{eq:fvm_rflux_ppm_rec}
  X(\xi) &= X_{j-1/2,-} + \xi\left(X_{j-1/2,+}-X_{j-1/2,-}+X_{6,j-1/2}(1-\xi)\right)\;,\\
\label{eq:fvm_rflux_ppm_x6}
  &X_{6,j-1/2} \eqdef 6X_{j-1/2}-3(X_{j-1/2,-}+X_{j-1/2,+})\;.
\end{align}
Here $X_{j-1/2,-}$ and $X_{j-1/2,+}$ are the two end values that are defined as:
\begin{align}
  \label{eq:fvm_rflux_rec_l}
  &X_{j-1/2,-} = X_{j-1/2}+\phi_{j-1/2,-}^X(X_{j-1}-X_{j-1/2})\;, \\ 
  \label{eq:fvm_rflux_rec_r}
  &X_{j-1/2,+} = X_{j-1/2}+\phi_{j-1/2,+}^X(X_{j+1}-X_{j-1/2})\;, \\ 
  \label{eq:fvm_rflux_rec_f} 
  &\quad\textrm{ where }\ X_k \eqdef \frac{7}{12}(X_{k-1/2}+X_{k+1/2})-\frac{1}{12}(X_{k-3/2}+X_{k+3/2})\;,\ k=j-1,j\;.
\end{align}
The value $X_k,\;k=j-1,j$ are third-order reconstructions of the face values when the data is smooth; and the two limiters $\phi_{j-1/2,\pm}^X$ are decides as follows:
\begin{enumerate}[(a)]
  \item If $(X_j-X_{j-1/2})(X_{j-1}-X_{j-1/2})\ge0$, we have a local extrema and set:
    \begin{equation}\label{eq:fvm_rflux_lim_loc}
      \phi_{j-1/2,-}^X=\phi_{j-1/2,+}^X=0\;.
    \end{equation}
  \item If (a) is not true, and if $\abs{X_j-X_{j-1/2}}>2\abs{X_{j-1}-X_{j-1/2}}$ or $\abs{X_{j-1}-X_{j-1/2}}>2\abs{X_j-X_{j-1/2}}$, the corresponding reconstructed profile is not monotone on the interval $[\eta_{j-1},\;\eta_j]$ and we compute:
    \begin{equation}\label{eq:fvm_rflux_lim_mono_p}
      \phi_{j-1/2,+}^X = -\frac{2(X_{j-1}-X_{j-1/2})}{X_j-X_{j-1/2}}
    \end{equation}
    in the former case, and
    \begin{equation}\label{eq:fvm_rflux_lim_mono_m}
      \phi_{j-1/2,-}^X = -\frac{2(X_j-X_{j-1/2})}{X_{j-1}-X_{j-1/2}}
    \end{equation}
    in the latter case.
  \item Otherwise, set the remaining limiter, which is $\phi_{j,-}^X$, or $\phi_{j,+}^X$, or both, to one.
\end{enumerate}
Finally, denoting $\overline{X}_{j-1/2}^n=\eta_{j-1/2}^3X_{j-1/2}^n$ we compute the flux $\hat{\mathcal{F}}_j^n$ of (\ref{eq:fvm_thm_flux_r}) as:
\begin{align}
  \label{eq:fvm_rflux_gen}
  &\ \hat{\mathcal{F}}_j^n\left(\{X_{j-5/2}^n,\;X_{j-3/2}^n,\;X_{j-1/2}^n,\;X_{j+1/2}^n,\;X_{j+3/2}^n,\;X_{j+5/2}^n\},\;\{\phi_{j-1/2,+}^{X,n},\;\phi_{j+1/2,-}^{X,n}\}\right) \\
  \notag
  \eqdef&\ \mathcal{F}^{\upw}\left(-R'^nR^n;\;\overline{X}_{j-1/2,+}^n,\;\overline{X}_{j+1/2,-}^n\right)\;.
\end{align}
Here $\overline{X}_{k-1/2,\pm}^n$ are computed according to (\ref{eq:fvm_rflux_rec_l}) and (\ref{eq:fvm_rflux_rec_r}) with data $\overline{X}_{k-1/2}^n$; and the limiters $\phi_{j-1/2,\pm}^{X,n}$ are computed as $\phi_{j-1/2,\pm}^{\overline{X}}$ according to (a-c) given previously.

\begin{theorem}\label{thm:fvm_rflux_cub}
  The flux $F_{R',j}^{X,n}=\hat{\mathcal{F}}^n_j$, which is given by (\ref{eq:fvm_rflux_gen}), satisfies (\ref{eq:fvm_cubic}), i.e.,:
  \begin{displaymath}
    \frac{1}{\Delta\eta}\left(F_{R',j}^{1,n}-F_{R',j-1}^{1,n}\right)=-3\eta_{j-1/2}^2R'^nR^n
  \end{displaymath} 
  for $j\ge 4$.
\end{theorem}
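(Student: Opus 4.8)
The plan is to compute the left-hand side of (\ref{eq:fvm_cubic}) directly by following the PPM machinery on the constant data $X\equiv 1$. Substituting $X_{k-1/2}^n=1$ into (\ref{eq:fvm_rflux_gen}) gives $\overline{X}_{k-1/2}^n=\eta_{k-1/2}^3$, and the first observation is that these numbers are exactly the cell averages over $[\eta_{k-1},\eta_k]$ of the \emph{cubic} polynomial $q(\eta)\eqdef\eta^3-\tfrac14\Delta\eta^2\,\eta$. Indeed $\tfrac1{\Delta\eta}\int_{\eta_{k-1}}^{\eta_k}\eta\,d\eta=\eta_{k-1/2}$ and $\tfrac1{\Delta\eta}\int_{\eta_{k-1}}^{\eta_k}\eta^3\,d\eta=\eta_{k-1/2}^3+\tfrac14\Delta\eta^2\,\eta_{k-1/2}$, so the $-\tfrac14\Delta\eta^2\eta$ term is precisely the correction that makes the average of $q$ equal to $\eta_{k-1/2}^3$.

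Next I would invoke the standard fact that the four-point edge interpolation (\ref{eq:fvm_rflux_rec_f}) is fourth-order, hence reproduces the nodal value of any polynomial of degree $\le 3$ from its cell averages exactly. Consequently the interpolated node values are $\overline{X}_k=q(\eta_k)$ for every node $\eta_k$ entering the stencils of $\hat{\mathcal F}_{j-1}^n$ and $\hat{\mathcal F}_j^n$; this is the place where the hypothesis $j\ge4$ is used, since it guarantees that all the cell averages involved (down to index $j-7/2$) lie inside $[0,1]$, so that (\ref{eq:fvm_rflux_rec_f}) may be applied without boundary modification.

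The step I expect to require the most care is checking that the monotonicity limiters (a)--(c) never activate on the three intervals that actually matter, namely those centered at $\eta_{j-3/2},\eta_{j-1/2},\eta_{j+1/2}$, so that all $\phi^{\overline X}_{m-1/2,\pm}=1$. On such an interval one has $\overline X_m-\overline X_{m-1/2}=q(\eta_m)-\eta_{m-1/2}^3=\tfrac32\eta_{m-1/2}^2\Delta\eta+\tfrac12\eta_{m-1/2}\Delta\eta^2>0$ and, symmetrically, $\overline X_{m-1}-\overline X_{m-1/2}=-\tfrac32\eta_{m-1/2}^2\Delta\eta+\tfrac12\eta_{m-1/2}\Delta\eta^2$, which is negative as soon as $\eta_{m-1/2}>\tfrac13\Delta\eta$ — true here since $\eta_{m-1/2}\ge\tfrac52\Delta\eta$ when $j\ge4$. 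Hence the product in (\ref{eq:fvm_rflux_lim_loc}) is strictly negative and case (a) is skipped; and the two comparisons in case (b) reduce, respectively, to $\Delta\eta>\eta_{m-1/2}$ and to an inequality whose left-hand side is negative, neither of which holds on these intervals. Therefore case (c) applies, both limiters equal one, the reconstruction (\ref{eq:fvm_rflux_ppm_rec}) is continuous across the nodes, and the two one-sided traces that (\ref{eq:fvm_rflux_gen}) feeds into the upwind flux at $\eta_j$ both equal $\overline X_j=q(\eta_j)$ (and likewise at $\eta_{j-1}$).

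Finally, since the two traces at each node coincide, the upwind selection is immaterial and $F_{R',j}^{1,n}=-R'^nR^n\,q(\eta_j)$ for either sign of $R'^n$. It then remains to evaluate $q(\eta_j)-q(\eta_{j-1})=(\eta_j^3-\eta_{j-1}^3)-\tfrac14\Delta\eta^2(\eta_j-\eta_{j-1})=\big(3\eta_{j-1/2}^2\Delta\eta+\tfrac14\Delta\eta^3\big)-\tfrac14\Delta\eta^3=3\eta_{j-1/2}^2\Delta\eta$, so that $\tfrac1{\Delta\eta}\big(F_{R',j}^{1,n}-F_{R',j-1}^{1,n}\big)=-3\eta_{j-1/2}^2R'^nR^n$, which is (\ref{eq:fvm_cubic}). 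Apart from the limiter bookkeeping of the third step, everything is elementary algebra with the explicit cubic $q$.
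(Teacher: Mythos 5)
Your proof is correct and follows essentially the same route as the paper's: evaluate the PPM face values on constant data, check that the limiter cases (a)--(c) all resolve to $\phi^{\overline{X}}_{\cdot,\pm}=1$ on the relevant interior intervals, note that the two one-sided traces at each node coincide so the upwind selection is immaterial, and telescope the resulting cubic. The only difference is cosmetic: you obtain $\overline{X}_k=\eta_k^3-\frac{1}{4}\eta_k\Delta\eta^2$ by recognizing the data $\eta_{k-1/2}^3$ as cell averages of $q(\eta)=\eta^3-\frac{1}{4}\Delta\eta^2\,\eta$ and invoking the cubic-exactness of the edge formula~(\ref{eq:fvm_rflux_rec_f}), whereas the paper computes the same value by direct algebra in~(\ref{eq:fvm_rflux_cub_rec}).
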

\begin{proof}
  We suppress the superscript $n$ for simplicity and let $X_{k-1/2}\equiv1,\;\forall k$, then the reconstructed values $\overline{X}_k$, $k\ge2$ are given by:
  \begin{equation}\label{eq:fvm_rflux_cub_rec}
    \overline{X}_k = \frac{7}{12}\left(\eta_{k-1/2}^3+\eta_{k+1/2}^3\right) - \frac{1}{12}\left(\eta_{k-3/2}^3+\eta_{k+3/2}^3\right)
    = \eta_k^3-\frac{1}{4}\eta_k\Delta\eta^2\;.
  \end{equation}
  Now we compute the limiters $\phi^{X}_{k-1/2,\pm}$ for $k\ge3$.
  First of all:
  \begin{align*}
    \overline{X}_k-\overline{X}_{k-1/2} &= \eta_k^3-\frac{1}{4}\eta_k\Delta\eta^2-\eta_{k-1/2}^3 = \frac{3}{2}\eta_{k-1/2}^2\Delta\eta + \frac{1}{2}\eta_{k-1/2}\Delta\eta^2 > 0\;. \\
    \overline{X}_{k-1}-\overline{X}_{k-1/2} &= \eta_{k-1}^3-\frac{1}{4}\eta_{k-1}\Delta\eta^2-\eta_{k-1/2}^3 = -\frac{3}{2}\eta_{k-1/2}^2\Delta\eta + \frac{1}{2}\eta_{k-1/2}\Delta\eta^2 < 0 \;,
  \end{align*}
  thus the condition in (a) does not hold.
  Continuing to check the conditions in (b), we have:
  \begin{align*}
    2\abs{\overline{X}_k-\overline{X}_{k-1/2}} - \abs{\overline{X}_{k-1}-\overline{X}_{k-1/2}}
    = \frac{3}{2}\eta_{k-1/2}^2\Delta\eta + \frac{3}{2}\eta_{k-1/2}\Delta\eta^2 > 0 \;, \\
    2\abs{\overline{X}_{k-1}-\overline{X}_{k-1/2}} - \abs{\overline{X}_k-\overline{X}_{k-1/2}}
    = \frac{3}{2}\eta_{k-1/2}^2\Delta\eta - \frac{3}{2}\eta_{k-1/2}\Delta\eta^2 > 0 \;, 
  \end{align*}
  hence neither condition in (b) is true.
  Thus we conclude that $\phi_{k-1/2,\pm}^{X}=1$ for all $k\ge3$; consequently $\overline{X}_{k-1/2,-} = \overline{X}_{k-1}$ and $\overline{X}_{k-1/2,+} = \overline{X}_k$.

  Finally we can calculate the flux $F_{R',j}^{1,n}$ for all $j\ge3$.
  Because $\overline{X}_{j+1/2,-}=\overline{X}_j=\overline{X}_{j-1/2,+}$, regardless of the sign of $R'^n$, there is:
  \begin{equation}\label{eq:fvm_rflux_cub_flux}
    F_{R',j}^{1,n} = -R'^nR^n\overline{X}_j = -R'^nR^n\left(\eta_j^3-\frac{1}{4}\eta_j\Delta\eta^2\right)\;,
  \end{equation}
  thus for $j,\;j-1\ge3$:
  \begin{align*}
    &\ \frac{1}{\Delta\eta}\left(F_{R',j}^{1,n}-F_{R',j-1}^{1,n}\right) = -\frac{R'^nR^n}{\Delta\eta}\left[
      \left(\eta_j^3-\frac{1}{4}\eta_j\Delta\eta^2\right) - \left(\eta_{j-1}^3-\frac{1}{4}\eta_{j-1}\Delta\eta^2\right)
    \right] \\
    = &\ -\frac{R'^nR^n}{\Delta\eta}\left[\left((\eta_{j-1/2}+\frac{1}{2}\Delta\eta)^3-(\eta_{j-1/2}-\frac{1}{2}\Delta\eta)^3\right) - \frac{1}{4}(\eta_j-\eta_{j-1})\Delta\eta^2\right] \\
    = &\ -\frac{R'^nR^n}{\Delta\eta}\left(3\eta_{j-1/2}^2\Delta\eta + \frac{1}{4}\Delta\eta^3-\frac{1}{4}\Delta\eta^3\right) = -3\eta_{j-1/2}^2R'^nR^n\;.
  \end{align*}
  This concludes that the constructed flux is cubic-preserving.
\end{proof}

Theorem~\ref{thm:fvm_rflux_cub} addresses the cubic-preserving for intervals far away from the boundaries; next we consider boundary intervals and focus on those near the origin first.
More specifically, we need to consider the cubic-preserving on the first three intervals $[0,\; \Delta\eta]$, $[\Delta\eta,\; 2\Delta\eta]$, and $[2\Delta\eta,\; 3\Delta\eta]$.
Following a similar procedure in the preceding proof, cubic-preserving on these three intervals amounts to:
\begin{subequations}\label{eq:fvm_rflux_bc}
\begin{align}
  \label{eq:fvm_rflux_bc_2}
  F_{R',2}^{1,n} &= -R'^nR^n\left(\eta_2^3-\frac{1}{4}\eta_2\Delta\eta^2\right)\;,\\
  \label{eq:fvm_rflux_bc_1}
  F_{R',1}^{1,n} &= -R'^nR^n\left(\eta_1^3-\frac{1}{4}\eta_1\Delta\eta^2\right)\;,\\
  \label{eq:fvm_rflux_bc_0}
  F_{R',0}^{1,n} &= 0\;.
\end{align}
\end{subequations}
Note that (\ref{eq:fvm_rflux_bc_0}) is enforced automatically as the boundary condition at the origin; hence we focus on the first two, which are guaranteed if:
\begin{displaymath}
  \overline{X}_2 = \frac{15}{2}\Delta\eta^3\;,\quad
  \overline{X}_1 = \frac{3}{4}\Delta\eta^3\;,
\end{displaymath}
and that $\phi_{3/2,\pm}^{X,n}=\phi_{1/2,\pm}^{X,n}=1$ when all $X$'s are $1$.
The required $\overline{X}_2$ is precisely given by the formula in (\ref{eq:fvm_rflux_rec_l}) or (\ref{eq:fvm_rflux_rec_l}); thus we just need to look at $\overline{X}_1$ and the limiters.
To this end, utilizing the knowledge that when $\overline{X}$ is defined as $\eta^3X$, we expect $\overline{X}=0$ at $\eta=0$ and define $\overline{X}_1$ by the modified formula:
\begin{equation}\label{eq:fvm_rflux_rec_1}
  \overline{X}_1 = \frac{7}{12}(\overline{X}_{1/2}+\overline{X}_{3/2}) - \frac{1}{12}(\overline{X}_{5/2}-\overline{X}_{1/2})\;.
\end{equation}
According to this definition, $\phi_{3/2,\pm}^{X,n}=1$ when $X\equiv1$ as desired; but on the first interval we have $\phi_{1/2,-}^{X,n}=1$ and $\phi_{1/2,+}^{X,n}=2/5$ following the criterion before, particularly (\ref{eq:fvm_rflux_lim_mono_p}).
Hence we need to relax it.
A simple fix could be derived by the following reconstruction on the interval $[0,\; \Delta\eta]$ utilizing the knowledge that $\overline{X}\sim\eta^3$ near $\eta=0$:
\begin{align*}
  \overline{X}(\xi) &= \xi^3(\overline{X}_{1/2,+}+\overline{X}_{6,1/2}(1-\xi^2))\;,\quad \overline{X}_{6,1/2}\eqdef 12 \overline{X}_{1/2}-3\overline{X}_{1/2,+}\;,\\
  \overline{X}_{1/2,+} &= \overline{X}_{1/2} + \phi_{1/2,+}^{\overline{X}}(\overline{X}_1-\overline{X}_{1/2})\;.
\end{align*}
As before $\overline{X}_{6,1/2}$ is defined such that the mean of $\overline{X}(\xi)$ is $\overline{X}_{1/2}$ for any right end value $\overline{X}_{1/2,+}$.
Because $\overline{X}_{1/2,-}\equiv \overline{X}_0\equiv0$, there is no way to design the limiter such that $\overline{X}(\xi)$ is monotone if $\overline{X}_{1/2}\ne0$ and $\overline{X}_1$ is too close to $\overline{X}_{1/2}$.
To this end, we design $\phi_{1/2,+}^{\overline{X}}$ as follows instead of the generic construction for other intervals:
\begin{enumerate}[(a')]
  \item If $\overline{X}_{1/2}\overline{X}_1\le0$ or if $3\abs{\overline{X}_1}\le 8\abs{\overline{X}_{1/2}}$, we set $\phi_{1/2,+}^{\overline{X}}=0$.
  \item If (a') is not true, and if $\abs{\overline{X}_1}>6\abs{\overline{X}_{1/2}}$, we define
  \begin{equation}\label{eq:fvm_rflux_lim_mono_1}
    \phi_{1/2,+}^{\overline{X}} =
    \frac{5}{\overline{X}_1/\overline{X}_{1/2}-1}\;. 
  \end{equation}
\item Otherwise, $\phi_{1/2,+}^{\overline{X}}=1$.
\end{enumerate}
Following this modified definition, when $\overline{X}_{1/2} = \eta_{1/2}^3 = \frac{1}{8}\Delta\eta^3$ and $\overline{X}_1=\frac{3}{4}\Delta\eta^3$, the limiter is precisely $\phi_{1/2,+}^{\overline{X},n}=1$.

\medskip

Finally we ensure the additivity of the fluxes $F_{R',j}^{G,n}$ and $F_{R',j}^{M,n}$ by constructing the limiter $\phi_{j,\pm}^n$ for both species as follows.
For $j\ge2$:
\begin{enumerate}[(A)]
  \item If $(\overline{M}_j-\overline{M}_{j-1/2})(\overline{M}_{j-1}-\overline{M}_{j-1/2})\ge0$ or $(\overline{G}_j-\overline{G}_{j-1/2})(\overline{G}_{j-1}-\overline{G}_{j-1/2})\ge0$, we set:
\begin{equation}\label{eq:fvm_rflux_slim_loc}
  \phi_{j-1/2,-}^n=\phi_{j-1/2,+}^n = 0\;.
\end{equation}
\item Otherwise, we compute:
\begin{displaymath}
\alpha_1 = \min\left(
  \frac{2\abs{\overline{M}_{j-1}-\overline{M}_{j-1/2}}}{\abs{\overline{M}_j-\overline{M}_{j-1/2}}},\;
  \frac{2\abs{\overline{G}_{j-1}-\overline{G}_{j-1/2}}}{\abs{\overline{G}_j-\overline{G}_{j-1/2}}}
    \right)\;,
\end{displaymath}
and
\begin{displaymath}
\alpha_2 = \max\left(
  \frac{\abs{\overline{M}_{j-1}-\overline{M}_{j-1/2}}}{2\abs{\overline{M}_j-\overline{M}_{j-1/2}}},\;
  \frac{\abs{\overline{G}_{j-1}-\overline{G}_{j-1/2}}}{2\abs{\overline{G}_j-\overline{G}_{j-1/2}}}
    \right)\;.
\end{displaymath}
  \begin{enumerate}[(B1)]
  \item If $\alpha_2>\alpha_1$, use (\ref{eq:fvm_rflux_slim_loc}).
  \item Otherwise if $\alpha_1<1$, set:
  \begin{equation}\label{eq:fvm_rflux_slim_mono_p}
    \phi_{j-1/2,-}^n = 1\;,\quad\phi_{j-1/2,+}^n=\alpha_1\;;
  \end{equation}
  and if $\alpha_2>1$, set:
  \begin{equation}\label{eq:fvm_rflux_slim_mono_m}
    \phi_{j-1/2,-}^n = \alpha_2^{-1}\;,\quad\phi_{j-1/2,+}^n=1\;.
  \end{equation}
  \end{enumerate}
\end{enumerate}
Similarly for the first interval, we have:
\begin{enumerate}[(A')]
  \item If $\overline{X}_{1/2}\overline{X}_1\le0$ or $3\abs{\overline{X}_1}\le8\abs{\overline{X}_{1/2}}$ is true for either $X=G$ or $X=M$, we set $\phi_{1/2,+}^n=0$.
\item Otherwise set:
\begin{equation}\label{eq:fvm_rflux_slim_1}
  \phi_{1/2,+}^n = \min\left(1,\;\frac{5}{\overline{G}_1/\overline{G}_{1/2}-1},\;\frac{5}{\overline{M}_1/\overline{M}_{1/2}-1}\right)\;.
\end{equation}
\end{enumerate}

\subsection{Modified fluxes: Part III}
\label{sec:fvm_bflux}
In the last part of the modified fluxes, we consider the intervals near the right boundary $\eta=1$.
Particularly, these are the intervals whose flux calculation requires solutions beyond the computational domain.

\medskip

At $\eta_{N_{\eta}}$, we notice that the velocity $V/R-\eta R'/R\equiv0$ due to (\ref{eq:model_norm_pde_r}). 
Hence the numerical fluxes need to satisfy the following identity:
\begin{equation}\label{eq:fvm_bflux_m0}
  F_{V,N_{\eta}}^{X,n}+F_{R',N_{\eta}}^{X,n} = 0\;,
\end{equation}
where $X=G$ or $X=M$.
In fact, practically we set both fluxes $F_{V,N_{\eta}}^{X,n}$ and $F_{R',N_{\eta}}^{X,n}$ to zero for convenience.

The remaining flux $F_{u,N_\eta}^{M,n}$ is computed as:
\begin{equation}\label{eq:fvm_bflux_u_m0}
  F_{u,N_\eta}^{M,n} = \eta_{N_\eta}^2(R^n)^2\mathcal{F}^{\upw}\left(\frac{u_{N_\eta}^n}{R^n};\;M_{N_\eta-1/2}^n,\;M_{\bc}(t^n)\right)\;,
\end{equation}
no matter which flux function is used for interior nodes.

\medskip

At $\eta_{N_{\eta}-1}$, calculating $F_{V,N_{\eta}-1}^{X,n}$ using the upwind flux~(\ref{eq:fvm_vflux_v_upw}) does not require any intervals beyond $\eta=1$ hence no modification is needed.
The MUSCL flux~(\ref{eq:fvm_vflux_v_muscl}), however, requires the phantom variable $X_{N_{\eta}+1/2}$.
To this end, we avoid the linear reconstruction on the last interval and modify the numerical flux as:
\begin{equation}\label{eq:fvm_bflux_v_m1_muscl_add}
  F_{V,N_{\eta}-1}^{X,n} = \eta_{N_{\eta}-1}^2(R^n)^2\mathcal{F}^{\upw}\left(\frac{V_{N_{\eta}-1}^n}{R^n};\;X_{N_\eta-3/2}^n+\frac{1}{2}\phi_{N_\eta-3/2}^n\Delta X_{N_\eta-1}^n,\;X_{N_\eta-1/2}^n\right)\;,
\end{equation}
where $X=G$ or $X=M$ and $\phi_{N_\eta-3/2}^n$ is computed according to (\ref{eq:fvm_vflux_v_muscl_lim}).
Similarly, the flux $F_{u,N_\eta-1}^{M,n}$ is given by:
\begin{equation}\label{eq:fvm_bflux_u_m1_muscl_add}
  F_{u,N_{\eta}-1}^{M,n} = \eta_{N_{\eta}-1}^2(R^n)^2\mathcal{F}^{\upw}\left(\frac{u_{N_{\eta}-1}^n}{R^n};\;M_{N_\eta-3/2}^n+\frac{1}{2}\phi_{N_\eta-3/2}^n\Delta M_{N_\eta-1}^n,\;M_{N_\eta-1/2}^n\right)\;,
\end{equation}
where $\phi_{N_\eta-3/2}^n$ is the same limiter used in (\ref{eq:fvm_bflux_v_m1_muscl_add}).

In order to compute $F_{R',N_\eta-1}^{X,n}$ using the modified PPM method in Section~\ref{sec:fvm_rflux}, we need to use a biased stencil to interpolate $\overline{X}$ to obtain:
\begin{equation}\label{eq:fvm_bflux_ppm_rec_m1}
  \overline{X}_{N_{\eta}-1} = \frac{1}{12}(3\overline{X}_{N_{\eta}-1/2}+13\overline{X}_{N_{\eta}-3/2}-5\overline{X}_{N_{\eta}-5/2}+\overline{X}_{N_{\eta}-7/2})\;.
\end{equation}
Using this definition, when $X\equiv1$ we have $\overline{X}_{N_\eta-1}=\eta_{N_\eta-1}^3-\eta_{N_\eta-1}\Delta\eta^2/4$, c.f.~(\ref{eq:fvm_rflux_cub_rec}).
Following the same procedure in the proof of Theorem~\ref{thm:fvm_rflux_cub}, we have $\phi_{N_\eta-3/2,\pm}^{X}=1$ and thusly the cubic-preserving property holds for the interval $[\eta_{N_\eta-3},\;\eta_{N_\eta-2}]$.

Finally, to ensure cubic-preserving on the next interval $[\eta_{N_\eta-2},\;\eta_{N_\eta-1}]$, all that needs to be done is to design $\overline{X}_{N_{\eta}}$ properly such that the limiter $\phi_{N_\eta-1/2,-}^X$ takes the value $1$ when $X\equiv1$.
This can be achieved by the extrapolating formula:
\begin{equation}\label{eq:fvm_bflux_ppm_rec_r}
  \overline{X}_{N_{\eta}} = \frac{1}{12}(25\overline{X}_{N_{\eta}-1/2}-23\overline{X}_{N_{\eta}-3/2}+13\overline{X}_{N_{\eta}-5/2}-3\overline{X}_{N_{\eta}-7/2})\;.
\end{equation}
Note that $\overline{X}_{N_\eta}$ is only used for computing the limiter $\phi_{N_\eta-1/2,-}^X$.

\medskip

\subsection{Higher-order accuracy in time}
\label{sec:fvm_time}
The preceding sections fully specify the discretization with first-order and second-order accuracy in space, and first-order accuracy in time.
Here the temporal integration is achieved by the forward Euler (FE) method; hence extension to higher-order accuracy in time can be easily achieved by using Total Variation Diminishing (TVD) Runge-Kutta methods~\cite{CWShu:1988a, SGottlieb:1998a}.
In particular, we consider the second-order TVD Runge-Kutta, denoted by TVD-RK2 in the rest of the paper, to match the spatial order of accuracy when the MUSCL fluxes are used. 
For this purpose, we denote the solution as $\mathcal{S} = \{G,\;M,\;V,\;R\}$ and let the method proposed before with FE integrator be summarized as:
\begin{equation}\label{eq:fvm_time_fe}
\mathcal{S}^{n+1} = \mathcal{M}_{\Delta\tau^n}(\mathcal{S}^n)\;,
\end{equation}
here the subscript $\Delta\tau^n$ denotes the time-step size; then the method using TVD RK2 reads:
\begin{align}
\notag
&\mathcal{S}^{(1)} = \mathcal{M}_{\Delta\tau^n}(\mathcal{S}^n)\;, \\
\label{eq:fvm_time_rk2}
&\mathcal{S}^{(2)} = \mathcal{M}_{\Delta\tau^n}(\mathcal{S}^{(1)})\;, \\
\notag
&\mathcal{S}^{n+1} = \frac{1}{2}\left(\mathcal{S}^n+\mathcal{S}^{(2)}\right)\;.
\end{align}
Here the average is defined in the natural way, for example, $G_j^{n+1} = (G_j^n+G_j^{(2)})/2$ and $R^{n+1}=(R^n+R^{(2)})/2$, etc.

\medskip

Before concluding this section, we have three remarks.

{\bf Remark 1}.
The computation of the time step size $\Delta\tau^n$ is according to the classical Courant condition for linear stability.
However, the original formula needs to be adjusted since segregated advection velocities are used in the enhanced methods.
In the case of the upwind flux combining with first-order explicit time-integrator, the method remains conditionally stable and the analysis as well as the formula for the corresponding Courant condition are provided in Appendix~\ref{app:split}.

{\bf Remark 2}.
The methodology extends naturally to the original tumor growth problem.
In particular, the fluxes for $G$, $N$, $M$ are segregated similarly; and in extension to the MUSCL flux or PPM flux, the limiter synchronization needs to take into account all species.

{\bf Remark 3}.
When the tumor growth model~(\ref{eq:rev_eqn}) is considered, one typically requires an implicit time-integrator for updating the chemoattractant concentration $A$ to avoid tiny time step sizes.
It is then very natural to ask how the enhancement can be applied with implicit solvers.
We briefly address this issue in Appendix~\ref{app:imp} in the case of the backward-Euler method and the class of Diagonally Implicit Runge-Kutta (DIRK) methods, and provide brief numerical results for comparison.


\section{Numerical Assessment}
\label{sec:num}
We assess the performance of the numerical methods of Section~\ref{sec:fvm} using various benchmark tests.
First, we consider the model problem~(\ref{eq:model_norm_pde}) and verify the DTCL and DGCL properties of the proposed method.

\subsection{The model problem}
\label{sec:num_mod}
To assess the numerical performance, we consider a series of tests that are characterized by spatially constant solutions, ``prescribed'' growth, and non-monotonic radius change, respectively.
For all the tests, we set the initial radius as $R(0)=1.0$.
The purpose of these tests is to assess the ability of the enhanced methods to satisfy the totality conservation law and geometric conservation law discretely.
Hence for each test below, we compare the numerical results that are obtained by using four different methods:
\begin{itemize}
\item The conventional finite volume method with upwind fluxes as described in Section~\ref{sec:rev_fvm}.
\item The conventional finite volume method with MUSCL fluxes with the TVD-RK2 time-integrator as described in Section~\ref{sec:fvm_time}. 
\item The enhanced finite volume method with upwind $V$ and $u$ fluxes and cubic-preserving $R'$ fluxes, as required by Theorem~\ref{thm:fvm_gtcl}.
\item The enhanced finite volume method with MUSCL $V$ and $u$ fluxes and cubic-preserving $R'$ fluxes, as required by Theorem~\ref{thm:fvm_gtcl}, and TVD-RK2 time-integrator as described in Section~\ref{sec:fvm_time}.
\end{itemize}
The first two methods are denoted by ``Conv. Upwind'' and ``Conv. MUSCL'', respectively; and the two enhanced ones are denoted by ``Enhc. Upwind'' and ``Enhc. MUSCL'', respectively, in the subsequent tests.
For all the methods, the fixed Courant number $\alpha_{\cfl}=0.8$ is used.

\subsubsection{Test 1: Spatially constant solutions -- single species}
\label{sec:num_mod_1}
In the first two tests, the velocity field $u$ is manufactured such that (\ref{eq:model_pde}) allows solutions that are independent of the spatial coordinate.
Indeed, assuming $f\equiv h\equiv 0$ and $G(r,t) = G(t)$, (\ref{eq:model_pde_g}) indicates:
\begin{displaymath}
G'(t) + \frac{1}{r^2}\po{r}(r^2V(r,t))G(t) = 0\;.
\end{displaymath}
Thus $V$ has to vary linearly in $r$ and with abuse of notation we write $V(r,t)=rV(t)$, where $V(t)$ satisfies:
\begin{displaymath}
G'(t) + 3V(t)G(t) = 0\;\Rightarrow\;
G(t) = e^{-3\int_0^tV(s)ds}G(0)\;.
\end{displaymath}
Similarly, supposing further $M(r,t)=M(t)$, (\ref{eq:model_pde_m}) indicates $u(r,t)$ also varies linearly in $r$ and $u(r,t)=ru(t)$; hence we have:
\begin{displaymath}
M'(t) + 3(V(t)+u(t))M(t) = 0\;\Rightarrow\;
M(t) = e^{-3\int_0^t(V(s)+u(s))ds}M(0)\;.
\end{displaymath}
Lastly, (\ref{eq:model_pde_v}) is satisfied if and only if $V(t)+u(t)M(t)\equiv0$.
To this end, we let $V(t)=V_0>0$, which indicates:
\begin{equation}\label{eq:num_mod_1_sol}
G(t) = e^{-3V_0t}G(0)\;,\quad
M(t) = 1 - e^{-3V_0t}G(0)\;,\quad
u(t) = - \frac{V_0}{M(t)} = -\frac{V_0}{1-e^{-3V_0t}G(0)}\;.
\end{equation}
It is easy to check that (\ref{eq:num_mod_1_sol}) indeed solves the model problem, providing the boundary data:
\begin{equation}\label{eq:num_mod_1_bc}
M_{\bc}(t) = 1 - e^{-3V_0t}G(0)\;.
\end{equation}
In this case, the radius growth is exponential:
\begin{equation}\label{eq:num_mod_1_r}
R' = RV_0\ \Rightarrow\ 
R(t) = e^{V_0t}\;.
\end{equation}

\smallskip

In the first test, we set $V_0=0.5$ and consider the initial condition:
\begin{equation}\label{eq:num_mod_1_ic}
G(r,0) = 0.0\;,\quad
M(r,0) = 1.0\;.
\end{equation}
It is easy to tell that when the initial data $G(r,0)$ is zero, so is $G(r,t)$ for all $t>0$.
This is indeed satisfied by all our numerical solutions, whether using the conventional methods or the enhanced ones; hence we will not plot $G$ in the next results.
Solving the problem on a grid of $50$ uniform intervals until $T=2.0$, the histories of the incompressibility constraint violation index~(\ref{eq:rev_case_l1}) are plotted in Figure~\ref{fg:num_mod_1_inc}.
\begin{figure}\centering
  \includegraphics[width=.384\textwidth]{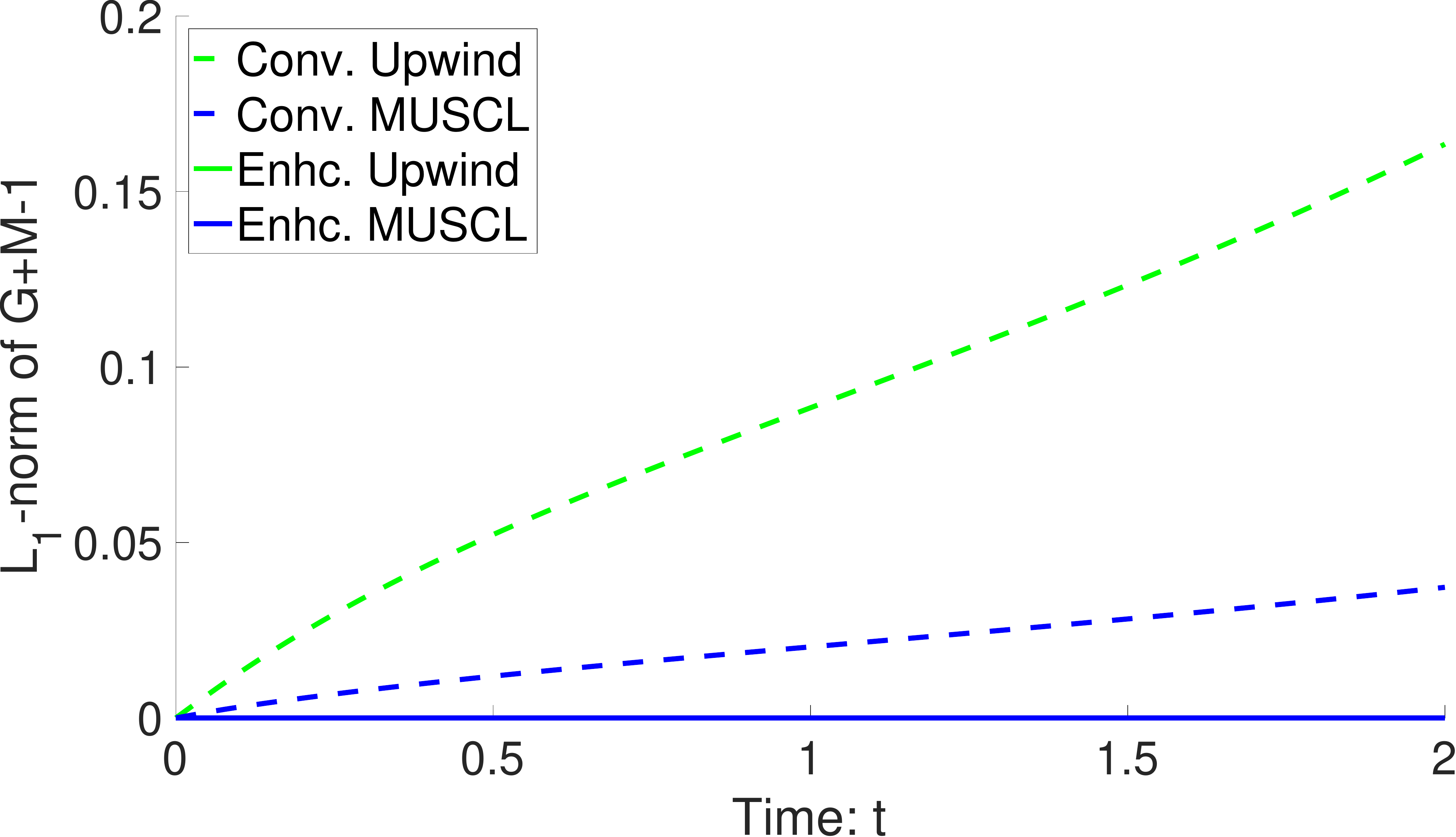}
  \vspace*{-.1in}
  \caption{Histories of $d_\theta$ on a $50$-interval grid.}
  \label{fg:num_mod_1_inc}
\end{figure}
We clearly observe that the incompressibility constraint is satisfied by both enhanced methods, whereas both conventional methods lead to increasing violation of this constraint as $t$ grows.

In Figure~\ref{fg:num_mod_1_sol}, we plot the radius histories and the profile of $M(r,T)$ in the left panel and the right panel, respectively.
\begin{figure}\centering
  \begin{subfigure}[b]{.48\textwidth}\centering
    \includegraphics[width=.8\textwidth]{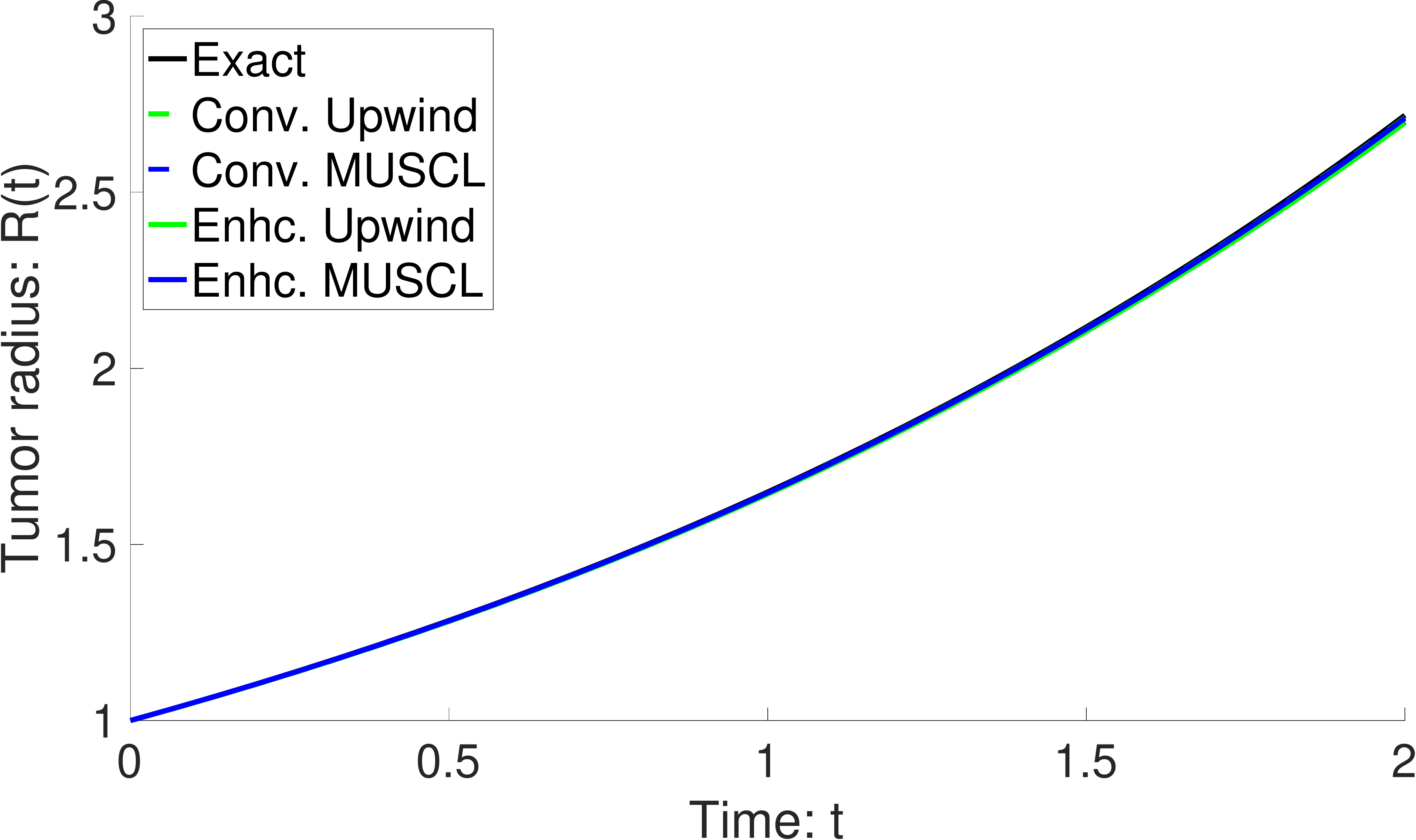}
    \caption{Radius growth history.}
    \label{fg:num_mod_1_sol_rad}
  \end{subfigure}
  \begin{subfigure}[b]{.48\textwidth}\centering
    \includegraphics[width=.8\textwidth]{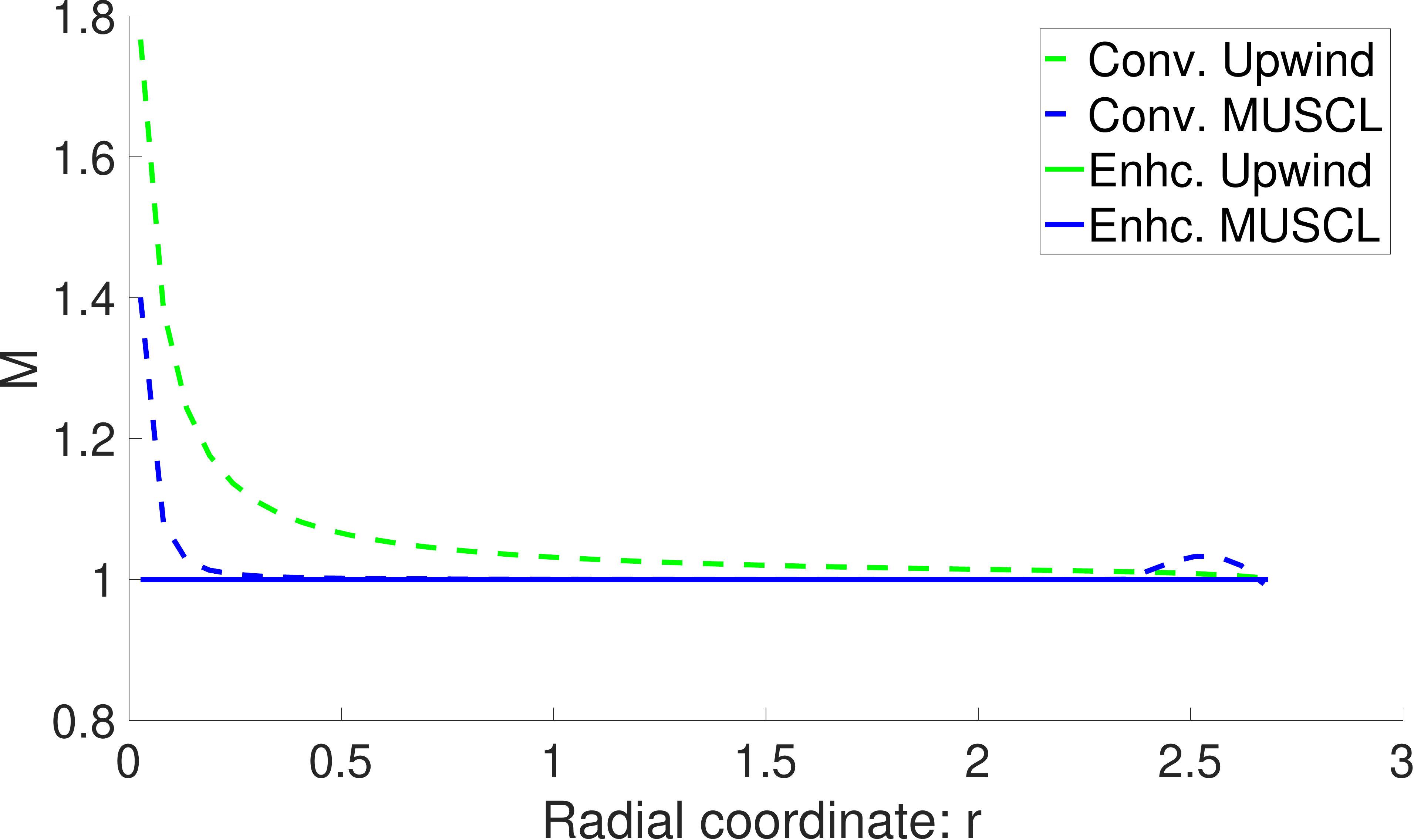}
    \caption{Cell numbers for $M$ at $T=2.0$.}
    \label{fg:num_mod_1_sol_cell}
  \end{subfigure}
  \vspace*{-.1in}
  \caption{Solutions to test 1 on a $50$-interval grid.}
  \label{fg:num_mod_1_sol}
\end{figure}
Here in Figure~\ref{fg:num_mod_1_sol_rad}, the reference radius growth curve~(\ref{eq:num_mod_1_r}) is plotted against the numerical ones; and we can see that all numerical solutions are close to the reference one, with the MUSCL fluxes provide slightly more accurate results than the upwind ones.
Figure~\ref{fg:num_mod_1_sol_cell} show that conventional methods fail to preserve constant solutions for a single species, indicating the violation of the geometrical conservation law; whereas both enhanced methods satisfy DGCL.

The same tests are performed on finer grids, with $100$, $200$, and $400$ cells, respectively; and we have very similar plots as before.
In Figure~\ref{fg:num_mod_1_conv_rad}, the final radius is plotted for each method on the sequence of four grids; and they're compared to the exact value. 
\begin{figure}\centering
  \includegraphics[width=.384\textwidth]{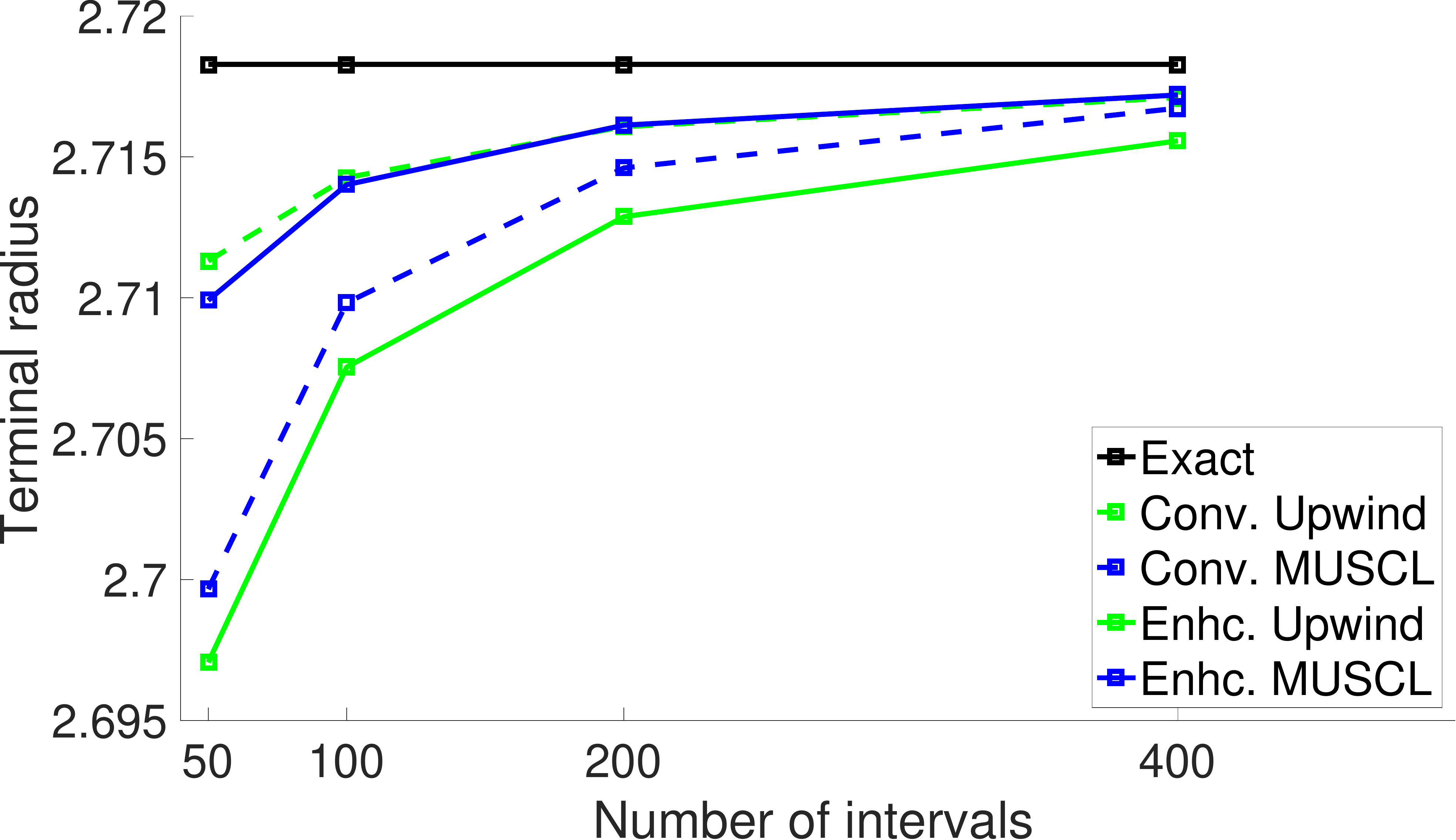}
  \vspace*{-.1in}
  \caption{Final radius (test 1) by various methods on a sequence of four grids.}
  \label{fg:num_mod_1_conv_rad}
\end{figure}
In addition, quantitative comparison is provided in Table~\ref{tb:num_mod_1_raderr} and Table~\ref{tb:num_mod_1_m_err}, which summarizes the errors in the final radius and the numerical solutions in $M$, respectively.
In order to evaluate the errors in $M$, we consider the $L_1$-error in the normalized coordinate that is defined as:
\begin{displaymath}
\sum_{i=1}^{N_{\eta}}\Delta\eta\abs{M_{k-1/2}^{N_\tau}-M^\ast(\eta_{k-1/2},T)}\;,
\end{displaymath}
where $N_\tau$ is the time step at $T$ and $M^\ast$ is the exact solution given by~(\ref{eq:num_mod_1_sol}); for this particular problem, we have $M^\ast\equiv1$.
\begin{table}\centering
\caption{Numerical errors in radius of test 1 at $T=2.0$.}
\label{tb:num_mod_1_raderr}
\vspace*{-.1in}
\begin{tabular}{|c|cc|cc|cc|cc|}
\hline
\multirow{2}{*}{$N_{\eta}$} & \multicolumn{2}{c|}{Conv. Upwind} 
                            & \multicolumn{2}{c|}{Conv. MUSCL} 
                            & \multicolumn{2}{c|}{Enhc. Upwind} 
                            & \multicolumn{2}{c|}{Enhc. MUSCL} \\ \cline{2-9}
                            & \multicolumn{1}{c|}{Error} & \multicolumn{1}{c|}{Rate} 
                            & \multicolumn{1}{c|}{Error} & \multicolumn{1}{c|}{Rate} 
                            & \multicolumn{1}{c|}{Error} & \multicolumn{1}{c|}{Rate} 
                            & \multicolumn{1}{c|}{Error} & \multicolumn{1}{c|}{Rate} \\ \hline
$50$  & -6.97e-3 &      & -1.86e-2 &      & -2.12e-2 &      & -8.37e-3 &      \\  
$100$ & -4.01e-3 & 0.80 & -8.46e-3 & 1.14 & -1.07e-2 & 0.98 & -4.27e-3 & 0.97 \\ 
$200$ & -2.21e-3 & 0.86 & -3.67e-3 & 1.20 & -5.40e-3 & 0.99 & -2.15e-3 & 0.99 \\ 
$400$ & -1.18e-3 & 0.91 & -1.55e-3 & 1.24 & -2.71e-3 & 1.00 & -1.08e-3 & 0.99 \\ \hline
\end{tabular}
\end{table}
\begin{table}\centering
\caption{$L_1$-errors in $M(\eta,T)$ of test 1 at $T=2.0$.}
\label{tb:num_mod_1_m_err}
\vspace*{-.1in}
\begin{tabular}{|c|cc|cc|cc|cc|}
\hline
\multirow{2}{*}{$N_{\eta}$} & \multicolumn{2}{c|}{Conv. Upwind} 
                            & \multicolumn{2}{c|}{Conv. MUSCL} 
                            & \multicolumn{2}{c|}{Enhc. Upwind} 
                            & \multicolumn{2}{c|}{Enhc. MUSCL} \\ \cline{2-9}
                            & \multicolumn{1}{c|}{Error} & \multicolumn{1}{c|}{Rate} 
                            & \multicolumn{1}{c|}{Error} & \multicolumn{1}{c|}{Rate} 
                            & \multicolumn{1}{c|}{Error} & \multicolumn{1}{c|}{Rate} 
                            & \multicolumn{1}{c|}{Error} & \multicolumn{1}{c|}{Rate} \\ \hline
$50$  & 6.03e-2 &      & 1.38e-2 &      & 3.89e-16 &     & 8.62e-16 &     \\  
$100$ & 3.46e-2 & 0.80 & 6.67e-3 & 1.05 & 8.18e-16 & N/A & 5.94e-16 & N/A \\ 
$200$ & 1.95e-2 & 0.83 & 3.24e-3 & 1.04 & 3.71e-15 & N/A & 7.43e-16 & N/A \\ 
$200$ & 1.09e-2 & 0.85 & 1.58e-3 & 1.04 & 9.80e-16 & N/A & 1.34e-16 & N/A \\ \hline
\end{tabular}
\end{table}
From Table~\ref{tb:num_mod_1_m_err}, we see that the numerical error in $M$ by the enhanced methods is at the scale of the machine accuracy, which indicates that they satisfy the discrete geometric conservation law; in comparison, the conventional finite volume method gives much larger errors.

\subsubsection{Test 2: Spatially constant solutions -- two species}
\label{sec:num_mod_2}
In the second test, we set again $V_0=0.5$ as in the previous problem, but consider the initial condition:
\begin{equation}\label{eq:num_mod_2_ic}
  G(r,0) = M(r,0) = 0.5\;,
\end{equation}
and modify the boundary condition accordingly, so that the exact solution is given by~(\ref{eq:num_mod_1_sol}).
On the coarsest grid with $50$ uniform cells, the numerical solutions at $T=2.0$ by all four methods are plotted in Figure~\ref{fg:num_mod_2_sol}.
\begin{figure}\centering
  \begin{subfigure}[b]{.48\textwidth}\centering
    \includegraphics[width=.8\textwidth]{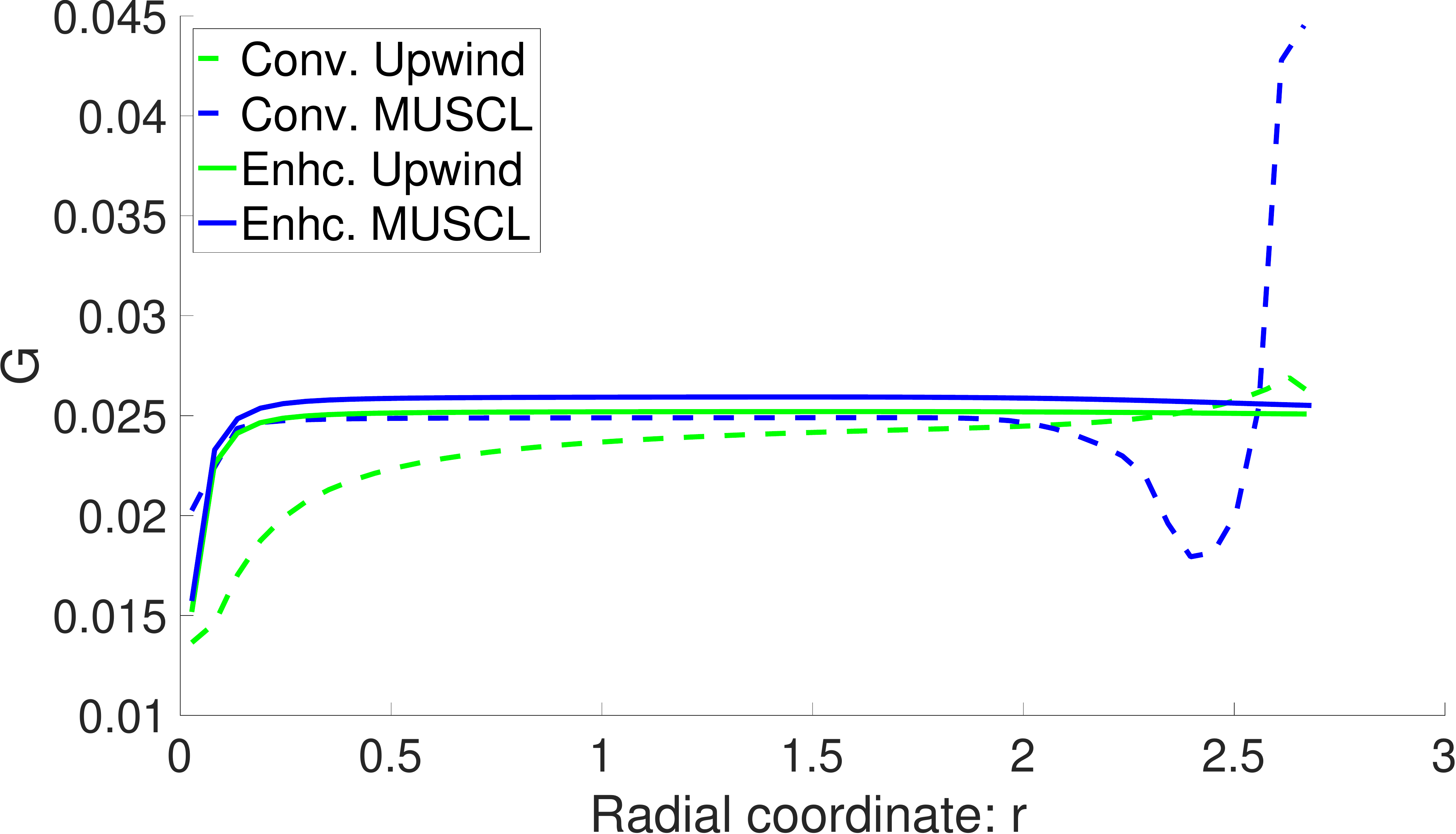}
    \caption{Cell numbers for $G$ at $T=2.0$.}
    \label{fg:num_mod_2_sol_g}
  \end{subfigure}
  \begin{subfigure}[b]{.48\textwidth}\centering
    \includegraphics[width=.8\textwidth]{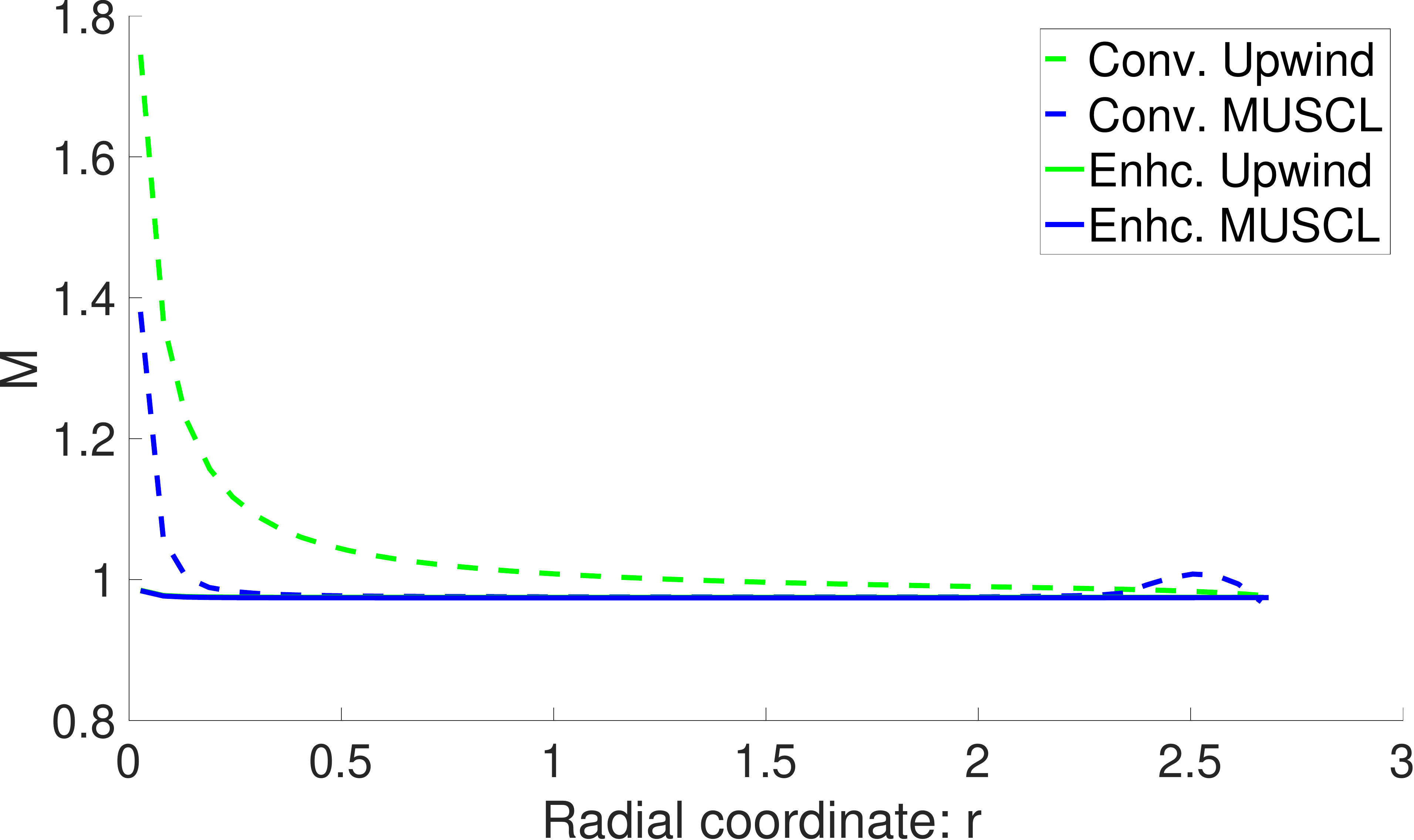}
    \caption{Cell numbers for $M$ at $T=2.0$.}
    \label{fg:num_mod_2_sol_m}
  \end{subfigure} \\
  \begin{subfigure}[b]{.48\textwidth}\centering
    \includegraphics[width=.8\textwidth]{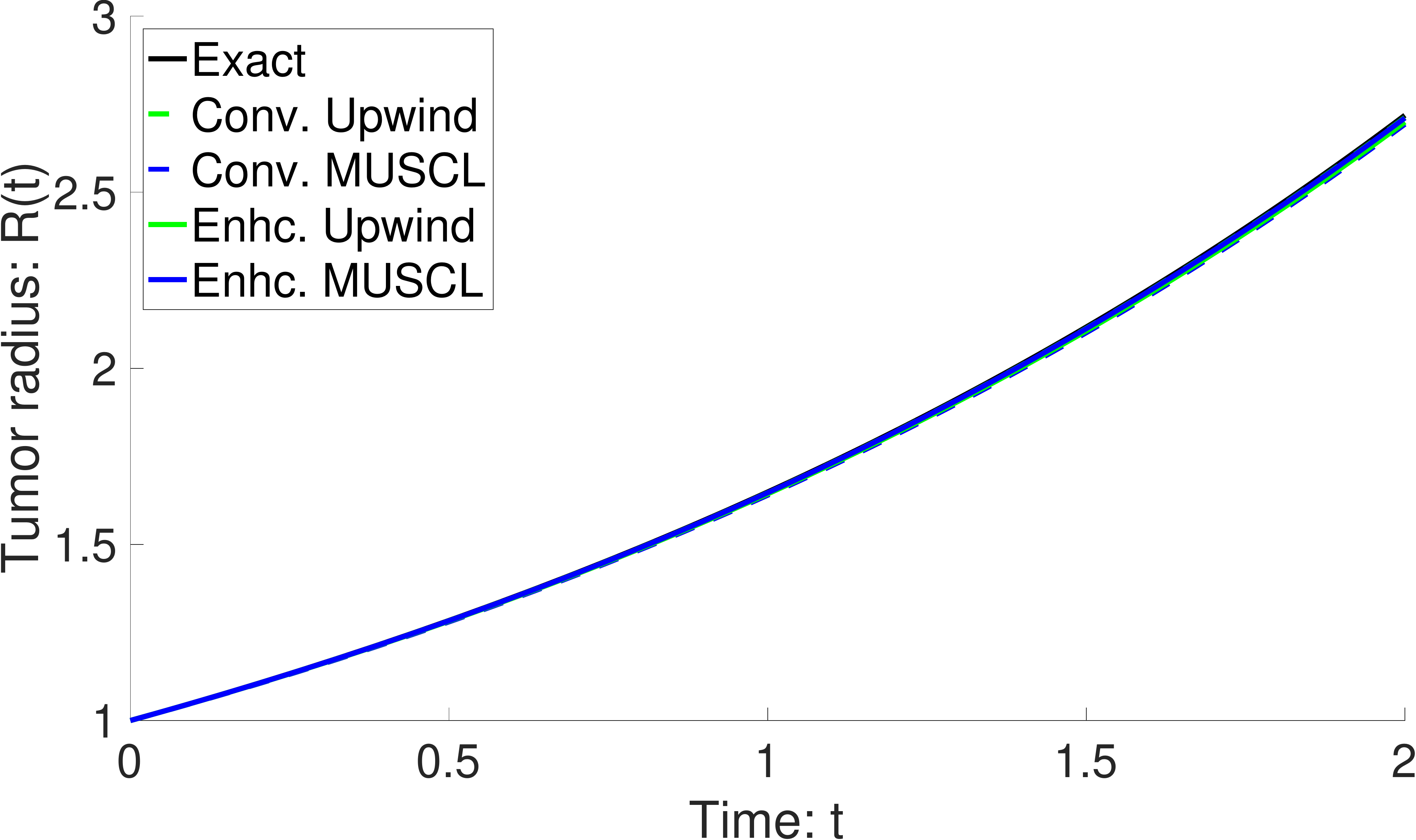}
    \caption{Radius growth history.}
    \label{fg:num_mod_2_sol_rad}
  \end{subfigure}
  \begin{subfigure}[b]{.48\textwidth}\centering
    \includegraphics[width=.8\textwidth]{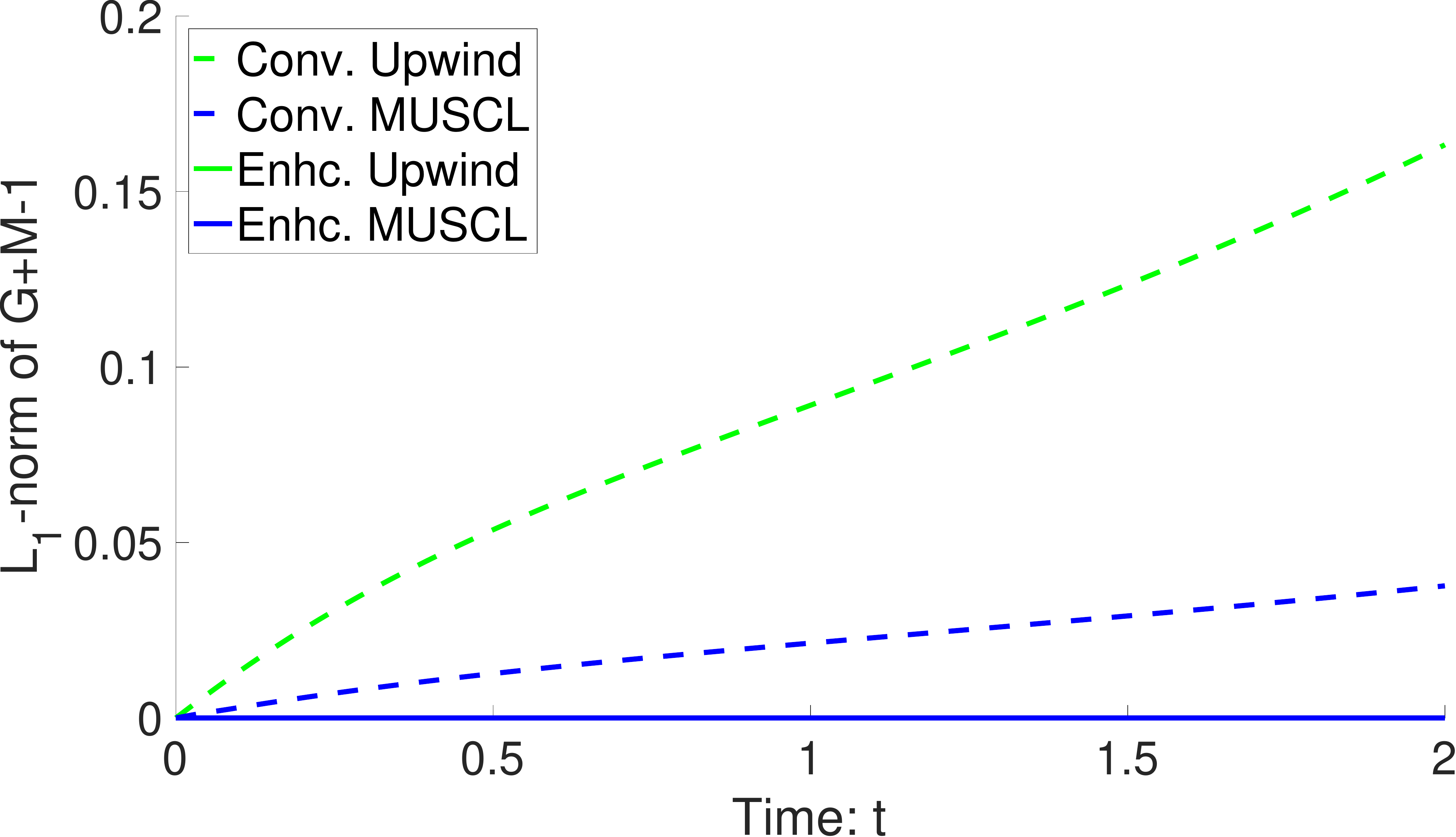}
    \caption{Histories of $d_\theta$.}
    \label{fg:num_mod_2_sol_inc}
  \end{subfigure}
  \vspace*{-.1in}
  \caption{Solutions to test 2 on a $50$-interval grid.}
  \label{fg:num_mod_2_sol}
\end{figure}
Again, the numerical radius growth agrees well with the exact one for all methods.
All four methods fail to compute spatially constant solutions in $G$ and $M$, see Figures~\ref{fg:num_mod_2_sol_g} and \ref{fg:num_mod_2_sol_m}; comparing the conventional and enhanced methods, however, we see clearly that the enhanced ones produce solutions with much less overshoots or undershoots.
In Figure~\ref{fg:num_mod_2_sol_inc}, once more we observe the satisfaction of DTCL by the enhanced methods, as the incompressibility constraint is well preserved. 

Similar as the previous test, the final radii computed by all four methods on a sequence of four meshes are plotted in Figure~\ref{fg:num_mod_2_conv_rad};
and the numerical errors are summarized in Table~\ref{tb:num_mod_2_raderr}--\ref{tb:num_mod_2_m_err} for quantitative comparison.
\begin{figure}\centering
  \includegraphics[width=.384\textwidth]{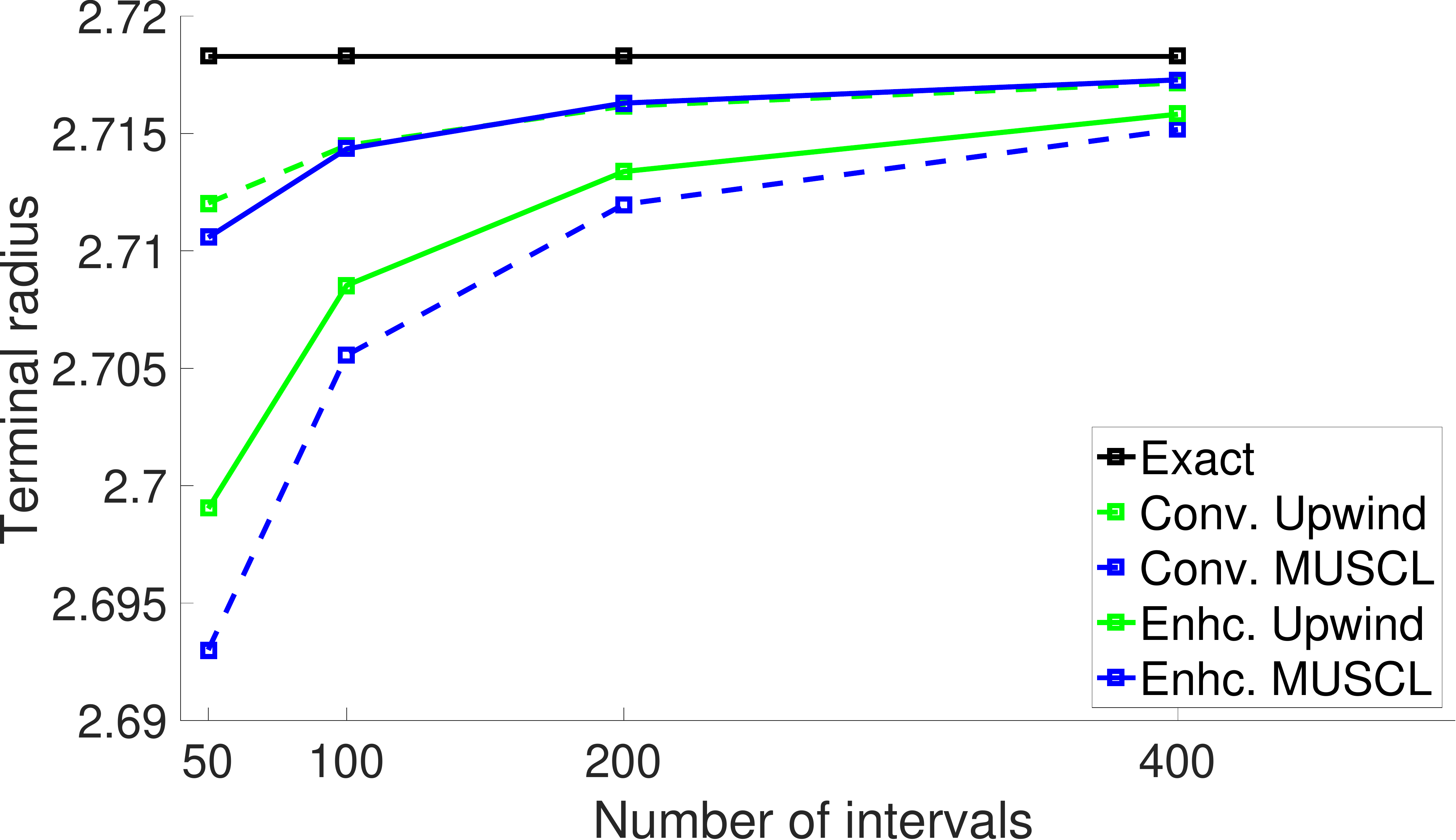}
  \vspace*{-.1in}
  \caption{Final radius (test 2) by various methods on a sequence of four grids.}
  \label{fg:num_mod_2_conv_rad}
\end{figure}
\begin{table}\centering
\caption{Numerical errors in radius of test 2 at $T=2.0$.}
\label{tb:num_mod_2_raderr}
\vspace*{-.1in}
\begin{tabular}{|c|cc|cc|cc|cc|}
\hline
\multirow{2}{*}{$N_{\eta}$} & \multicolumn{2}{c|}{Conv. Upwind} 
                            & \multicolumn{2}{c|}{Conv. MUSCL} 
                            & \multicolumn{2}{c|}{Enhc. Upwind} 
                            & \multicolumn{2}{c|}{Enhc. MUSCL} \\ \cline{2-9}
                            & \multicolumn{1}{c|}{Error} & \multicolumn{1}{c|}{Rate} 
                            & \multicolumn{1}{c|}{Error} & \multicolumn{1}{c|}{Rate} 
                            & \multicolumn{1}{c|}{Error} & \multicolumn{1}{c|}{Rate} 
                            & \multicolumn{1}{c|}{Error} & \multicolumn{1}{c|}{Rate} \\ \hline
$50$  & -6.25e-3 &      & -2.53e-2 &      & -1.93e-2 &      & -7.71e-3 &      \\  
$100$ & -3.79e-3 & 0.72 & -1.27e-2 & 0.99 & -9.75e-3 & 0.98 & -3.94e-3 & 0.97 \\ 
$200$ & -2.12e-3 & 0.84 & -6.30e-3 & 1.01 & -4.91e-3 & 0.99 & -1.99e-3 & 0.98 \\ 
$400$ & -1.13e-3 & 0.91 & -3.11e-3 & 1.02 & -2.46e-3 & 1.00 & -1.00e-3 & 0.99 \\ \hline
\end{tabular}
\end{table}
\begin{table}\centering
\caption{$L_1$-errors in $G(\eta,T)$ of test 2 at $T=2.0$.}
\label{tb:num_mod_2_g_err}
\vspace*{-.1in}
\begin{tabular}{|c|cc|cc|cc|cc|}
\hline
\multirow{2}{*}{$N_{\eta}$} & \multicolumn{2}{c|}{Conv. Upwind} 
                            & \multicolumn{2}{c|}{Conv. MUSCL} 
                            & \multicolumn{2}{c|}{Enhc. Upwind} 
                            & \multicolumn{2}{c|}{Enhc. MUSCL} \\ \cline{2-9}
                            & \multicolumn{1}{c|}{Error} & \multicolumn{1}{c|}{Rate} 
                            & \multicolumn{1}{c|}{Error} & \multicolumn{1}{c|}{Rate} 
                            & \multicolumn{1}{c|}{Error} & \multicolumn{1}{c|}{Rate} 
                            & \multicolumn{1}{c|}{Error} & \multicolumn{1}{c|}{Rate} \\ \hline
$50$  & 1.86e-3 &      & 1.57e-3 &      & 5.03e-4 &      & 1.09e-3 &      \\  
$100$ & 1.13e-3 & 0.72 & 9.79e-4 & 0.68 & 2.67e-4 & 0.91 & 5.76e-4 & 0.93 \\ 
$200$ & 6.63e-4 & 0.77 & 5.83e-4 & 0.75 & 1.40e-4 & 0.94 & 2.98e-4 & 0.95 \\
$400$ & 3.81e-4 & 0.80 & 3.39e-4 & 0.78 & 7.18e-5 & 0.96 & 1.53e-4 & 0.97 \\ \hline
\end{tabular}
\end{table}
\begin{table}\centering
\caption{$L_1$-errors in $M(\eta,T)$ of test 2 at $T=2.0$.}
\label{tb:num_mod_2_m_err}
\vspace*{-.1in}
\begin{tabular}{|c|cc|cc|cc|cc|}
\hline
\multirow{2}{*}{$N_{\eta}$} & \multicolumn{2}{c|}{Conv. Upwind} 
                            & \multicolumn{2}{c|}{Conv. MUSCL} 
                            & \multicolumn{2}{c|}{Enhc. Upwind} 
                            & \multicolumn{2}{c|}{Enhc. MUSCL} \\ \cline{2-9}
                            & \multicolumn{1}{c|}{Error} & \multicolumn{1}{c|}{Rate} 
                            & \multicolumn{1}{c|}{Error} & \multicolumn{1}{c|}{Rate} 
                            & \multicolumn{1}{c|}{Error} & \multicolumn{1}{c|}{Rate} 
                            & \multicolumn{1}{c|}{Error} & \multicolumn{1}{c|}{Rate} \\ \hline
$50$  & 6.18e-2 &      & 1.43e-2 &      & 5.03e-4 &      & 1.09e-3 &      \\  
$100$ & 3.55e-2 & 0.80 & 6.99e-3 & 1.03 & 2.67e-4 & 0.91 & 5.76e-4 & 0.93 \\ 
$200$ & 2.01e-2 & 0.82 & 3.44e-3 & 1.02 & 1.40e-4 & 0.94 & 2.98e-4 & 0.95 \\
$400$ & 1.12e-2 & 0.84 & 1.70e-3 & 1.02 & 7.18e-5 & 0.96 & 1.53e-4 & 0.97 \\ \hline
\end{tabular}
\end{table}
For this test, the interaction between the two cell numbers causes the $L_1$-errors in Table~\ref{tb:num_mod_2_g_err}--\ref{tb:num_mod_2_m_err} to be much larger than the previous case; however, the enhanced methods still produce much more accurate solutions than the conventional ones.
In addition, it is no coincidence that for both enhanced methods, the $L_1$-errors in $M$ are the same as those in $G$ on the same grids; 
this is because when DTCL and DGCL are satisfied, the two variables sum up to a constant value, whose numerical error is on the scale of machine precision, as shown in the previous test.

\subsubsection{Test 3: Monotone growth with constant boundary condition}
\label{sec:num_mod_3}
In the view of (\ref{eq:model_ode}), we can setup the velocity $u$ and boundary condition $M_{\bc}$ accordingly to obtain almost any desired monotonic growth pattern.
To be more specific, suppose a growth curve $\hat{R}(t)$ with $\hat{R}'>0$ is desired; all that we need to do is to make sure:
\begin{displaymath}
u(\hat{R}(t),t) < 0\;,\quad
u(\hat{R}(t),t)M_{\bc}(t) = -\hat{R}'(t)\;.
\end{displaymath}
Indeed for the simplified model~(\ref{eq:model_pde}), if $u(R(t),t)<0$ for all $t$, then the growth of $R(t)$ is completely determined by the boundary velocity and the boundary condition.
In this test, we consider a constant boundary condition $M_{\bc}(t) = 0.5$, and set up $u$ such that $R$ grows linearly as $R(t) = R(0) + V_0t$, where $V_0=0.5$:
\begin{equation}\label{eq:num_mod_3_u}
u(r,t) = -2V_0\;\sin\left(\frac{\pi\;r}{2(R(0)+V_0t)}\right)\;. 
\end{equation}
Here $u(r,t)$ is nonlinear in space, c.f. the previous test; and we do not expect the solutions to stay constant across the domain.

In Figure~\ref{fg:num_mod_3_sol}, we plot the numerical solutions for $G$ and $M$ at $T=2.0$ by all four methods as well as the histories of the radii and $d_{\theta}$.
\begin{figure}\centering
  \begin{subfigure}[b]{.48\textwidth}\centering
    \includegraphics[width=.8\textwidth]{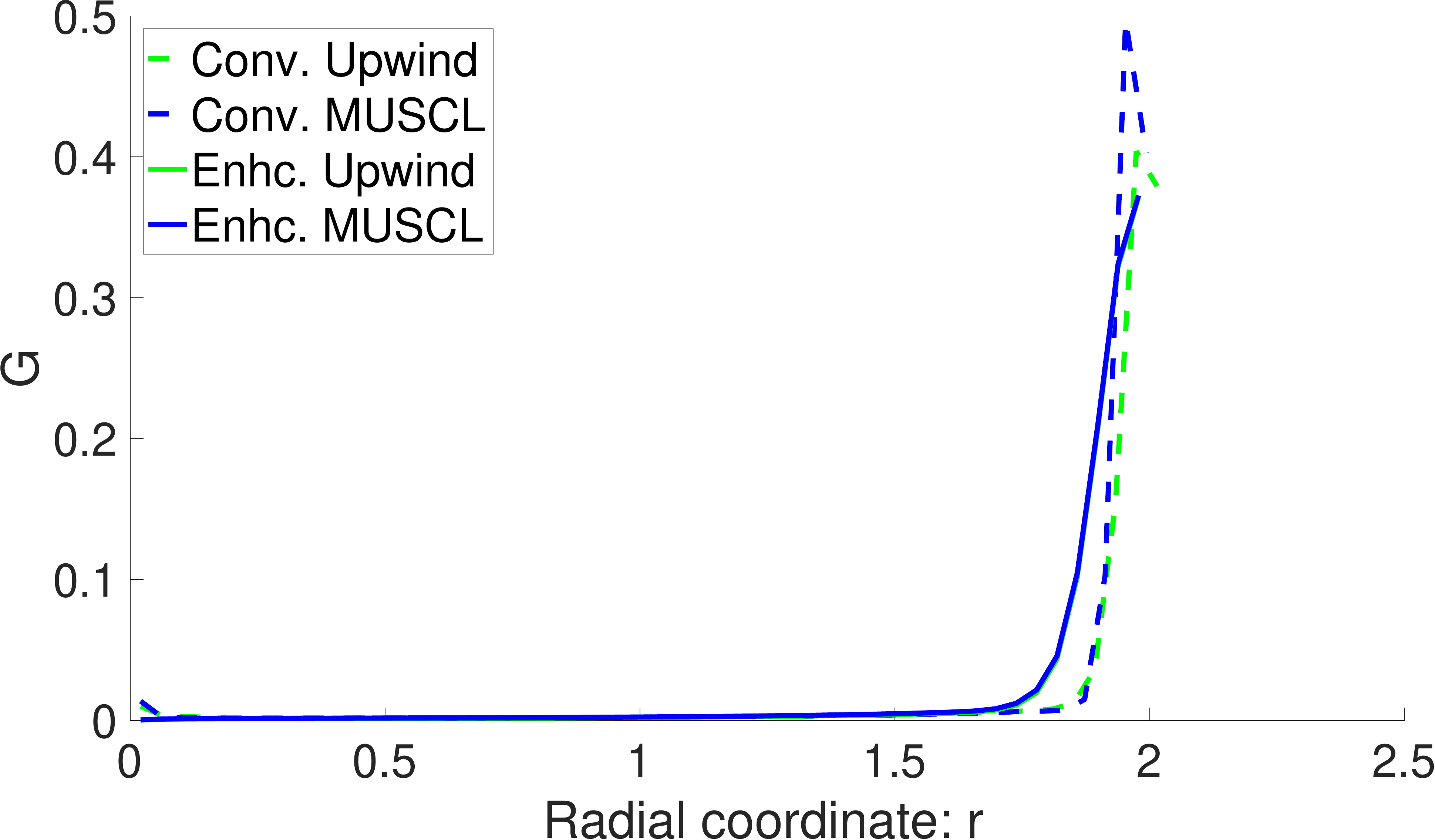}
    \caption{Cell numbers for $G$ at $T=2.0$.}
    \label{fg:num_mod_3_sol_g}
  \end{subfigure}
  \begin{subfigure}[b]{.48\textwidth}\centering
    \includegraphics[width=.8\textwidth]{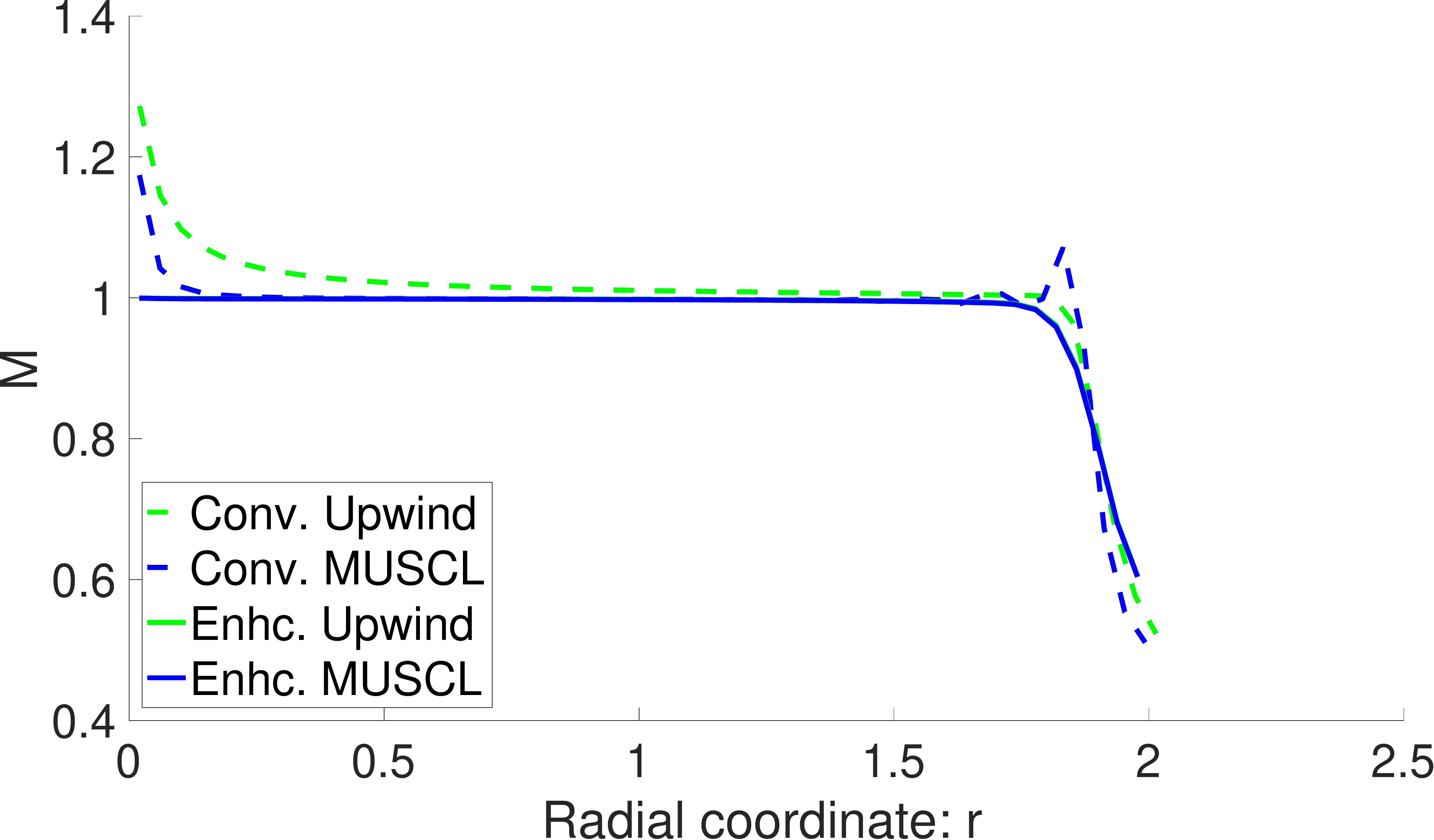}
    \caption{Cell numbers for $M$ at $T=2.0$.}
    \label{fg:num_mod_3_sol_m}
  \end{subfigure} \\
  \begin{subfigure}[b]{.48\textwidth}\centering
    \includegraphics[width=.8\textwidth]{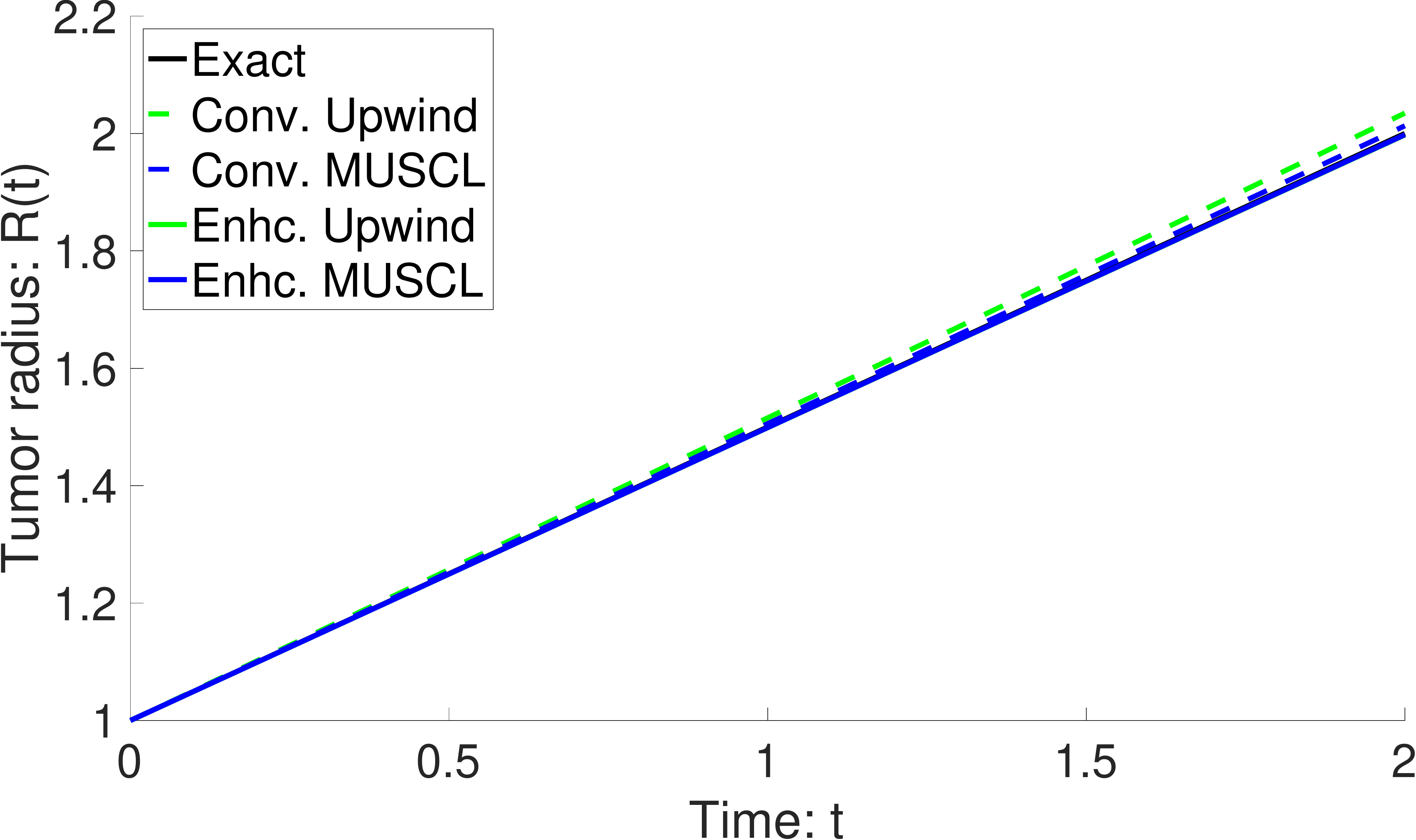}
    \caption{Radius growth history.}
    \label{fg:num_mod_3_sol_rad}
  \end{subfigure}
  \begin{subfigure}[b]{.48\textwidth}\centering
    \includegraphics[width=.8\textwidth]{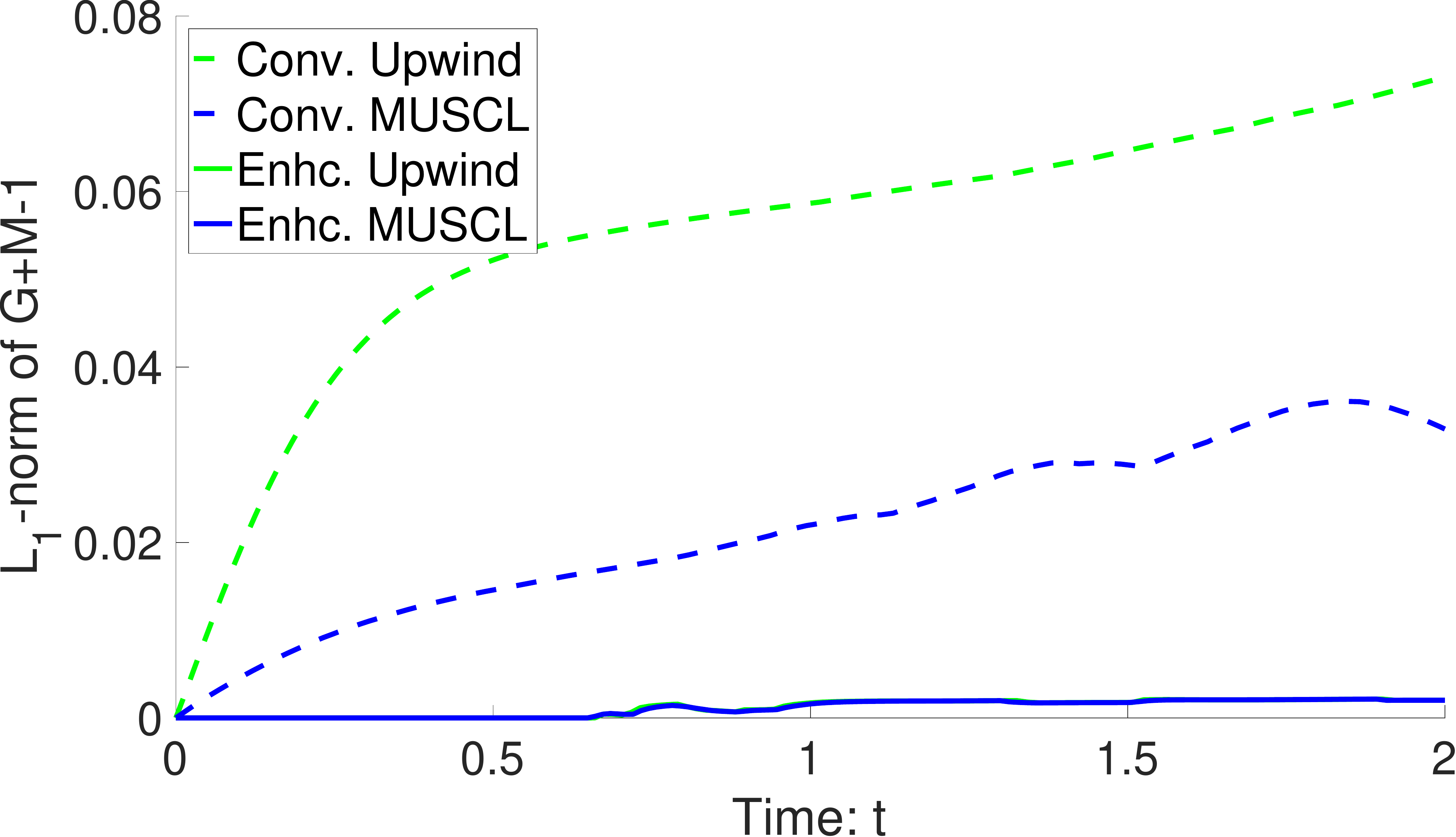}
    \caption{Histories of $d_\theta$.}
    \label{fg:num_mod_3_sol_inc}
  \end{subfigure}
  \vspace*{-.1in}
  \caption{Solutions to test 3 on a $50$-interval grid.}
  \label{fg:num_mod_3_sol}
\end{figure}
All methods predict well the linear growth of the radius.
Comparing the conventional methods and the enhanced ones, when the former are used, clear overshoots near the origin and spurious oscillations near the right boundary in both $G$ and $M$ are observed; however, both enhanced methods seem to lead to smooth solutions.
Similar patterns are observed on finer grids: the enhanced methods produce smooth solutions whereas the conventional ones lead to oscillations whose magnitudes increase as the grid is refined.
In Figure~\ref{fg:num_mod_3_sol_inc}, we see that the incompressibility constraint is much better preserved by the enhanced methods.
In comparison to the previous two tests, $d_\theta$ in this case is not at the machine precision level for the reason that the proposed methods are DGCL and DTCL for the interior nodes, whereas our theory does not address whether it is possible to satisfy these properties with arbitrary incoming data $M_{\bc}$.
This is exactly what happened here -- because of the jump in the boundary condition and the numerical solution at the last interval, small incompressibility violation is created and propagated towards the origin of the domain.
Nevertheless, the enhanced methods show significant improvement over their conventional counterparts. 

In Figure~\ref{fg:num_mod_3_conv_rad} and Table~\ref{tb:num_mod_3_raderr} we provide the convergence of the final radii by all methods on the same sequence of grids as well as quantitative comparisons.
Clearly, the enhanced methods provide much more accurate results than the conventional ones.
\begin{figure}\centering
  \includegraphics[width=.384\textwidth]{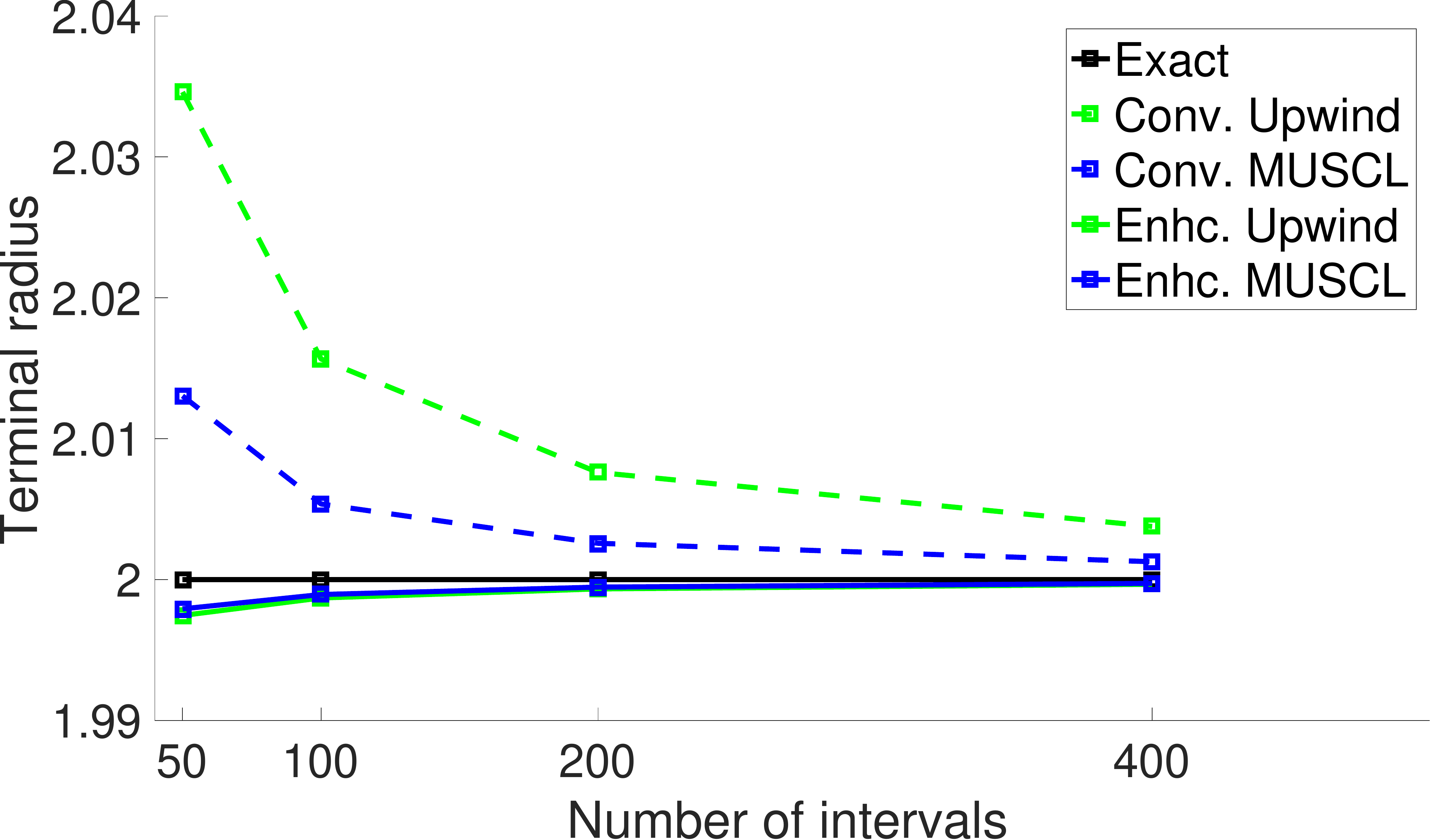}
  \vspace*{-.1in}
  \caption{Final radius (test 3) by various methods on a sequence of four grids.}
  \label{fg:num_mod_3_conv_rad}
\end{figure}
\begin{table}\centering
\caption{Numerical errors in radius of test 3 at $T=2.0$.}
\label{tb:num_mod_3_raderr}
\vspace*{-.1in}
\begin{tabular}{|c|cc|cc|cc|cc|}
\hline
\multirow{2}{*}{$N_{\eta}$} & \multicolumn{2}{c|}{Conv. Upwind} 
                            & \multicolumn{2}{c|}{Conv. MUSCL} 
                            & \multicolumn{2}{c|}{Enhc. Upwind} 
                            & \multicolumn{2}{c|}{Enhc. MUSCL} \\ \cline{2-9}
                            & \multicolumn{1}{c|}{Error} & \multicolumn{1}{c|}{Rate} 
                            & \multicolumn{1}{c|}{Error} & \multicolumn{1}{c|}{Rate} 
                            & \multicolumn{1}{c|}{Error} & \multicolumn{1}{c|}{Rate} 
                            & \multicolumn{1}{c|}{Error} & \multicolumn{1}{c|}{Rate} \\ \hline
$50$  & 3.46e-2 &      & 1.30e-2 &      & -2.54e-3 &      & -2.08e-3 &      \\  
$100$ & 1.57e-2 & 1.14 & 5.37e-3 & 1.28 & -1.30e-3 & 0.97 & -1.06e-3 & 0.97 \\ 
$200$ & 7.60e-3 & 1.04 & 2.58e-3 & 1.06 & -6.59e-4 & 0.98 & -5.33e-4 & 0.99 \\ 
$400$ & 3.77e-3 & 1.01 & 1.27e-3 & 1.02 & -3.31e-4 & 0.99 & -2.67e-4 & 1.00 \\ \hline
\end{tabular}
\end{table}

\subsubsection{Test 4: A prediction problem with non-monotone radius change}
\label{sec:num_mod_4}
Finally, we consider a test whose radius change cannot be predicted, by considering the velocity:
\begin{equation}\label{eq:num_mod_4_u}
u(r,t) = V_0\sin(r(1+t))\;,
\end{equation}
where $V_0=0.5$ is a constant that is small enough to prevent the domain from vanishing.

Numerical solutions on a grid of $50$ uniform interval are plotted in Figure~\ref{fg:num_mod_4_sol}.
\begin{figure}\centering
  \begin{subfigure}[b]{.48\textwidth}\centering
    \includegraphics[width=.8\textwidth]{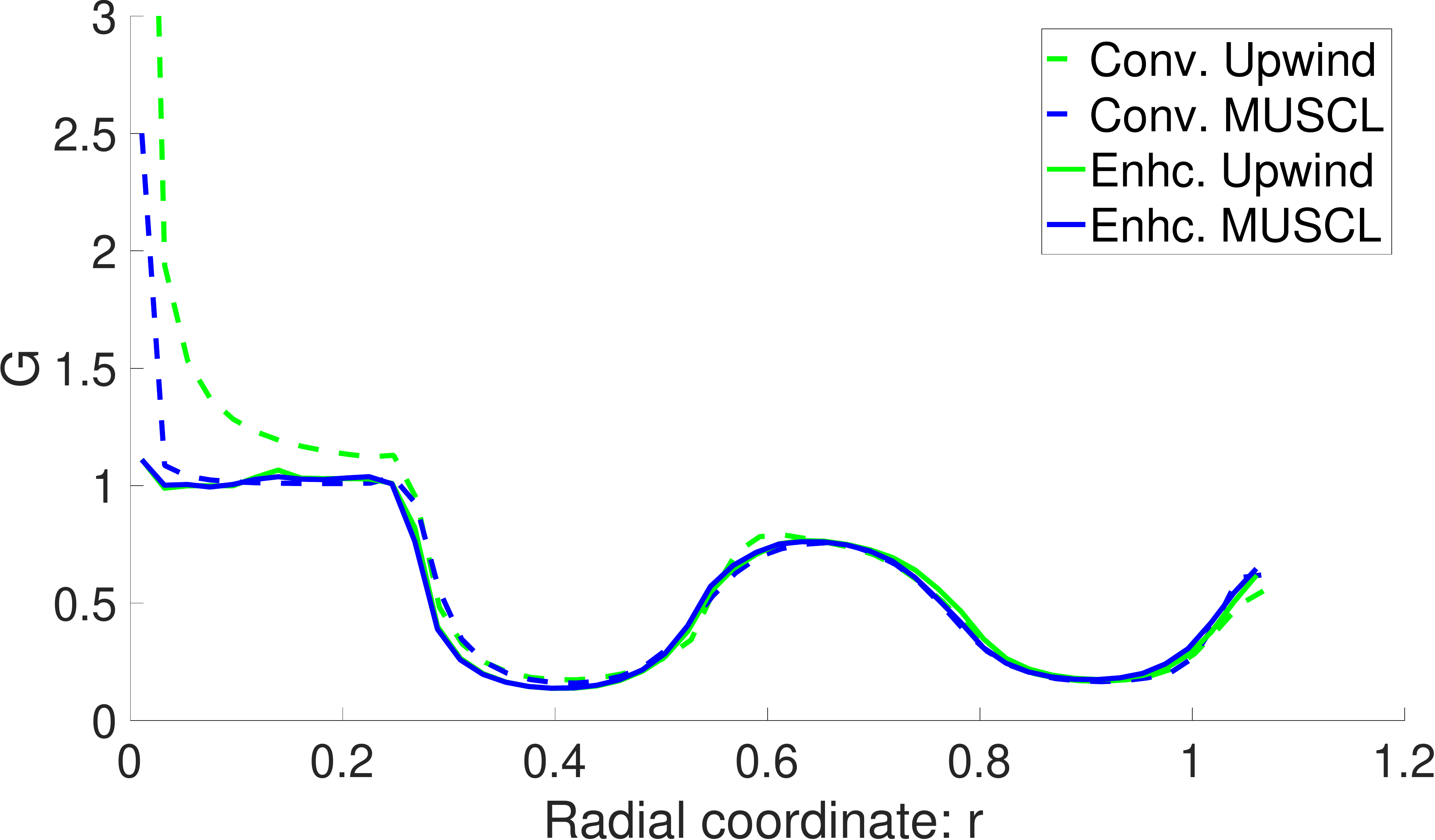}
    \caption{Cell numbers for $G$ at $T=2.0$.}
    \label{fg:num_mod_4_sol_g}
  \end{subfigure}
  \begin{subfigure}[b]{.48\textwidth}\centering
    \includegraphics[width=.8\textwidth]{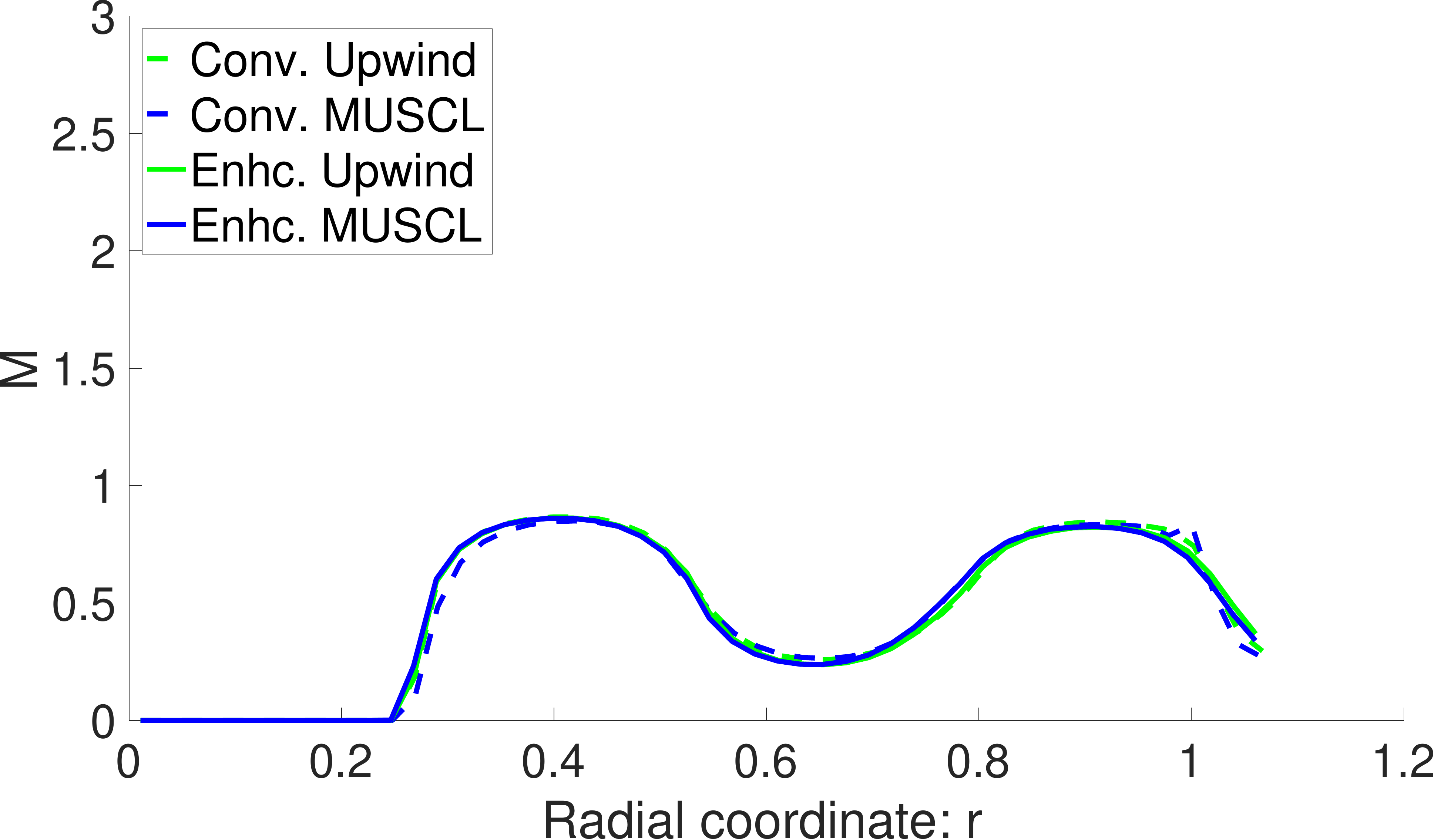}
    \caption{Cell numbers for $M$ at $T=2.0$.}
    \label{fg:num_mod_4_sol_m}
  \end{subfigure} \\
  \begin{subfigure}[b]{.48\textwidth}\centering
    \includegraphics[width=.8\textwidth]{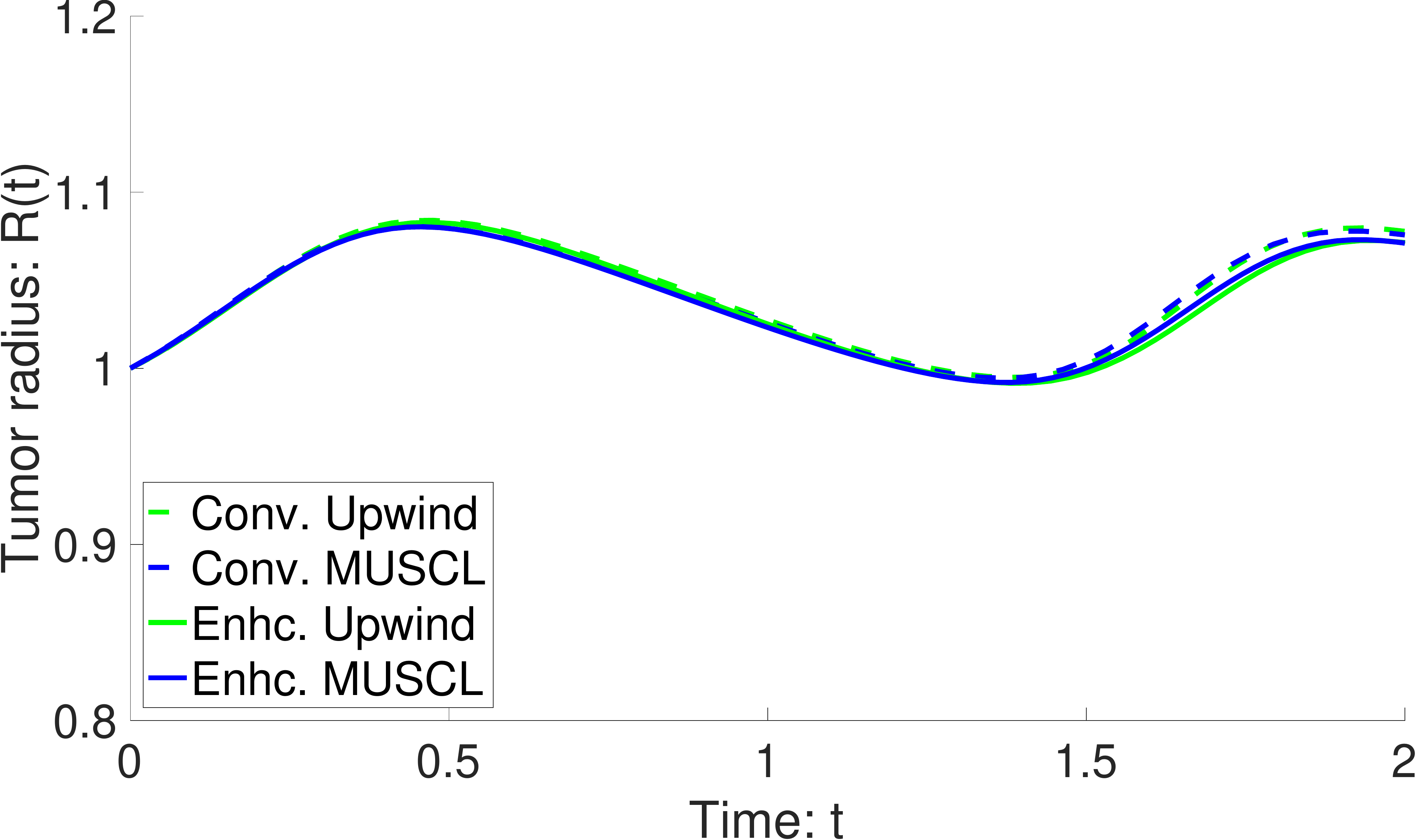}
    \caption{Radius growth history.}
    \label{fg:num_mod_4_sol_rad}
  \end{subfigure}
  \begin{subfigure}[b]{.48\textwidth}\centering
    \includegraphics[width=.8\textwidth]{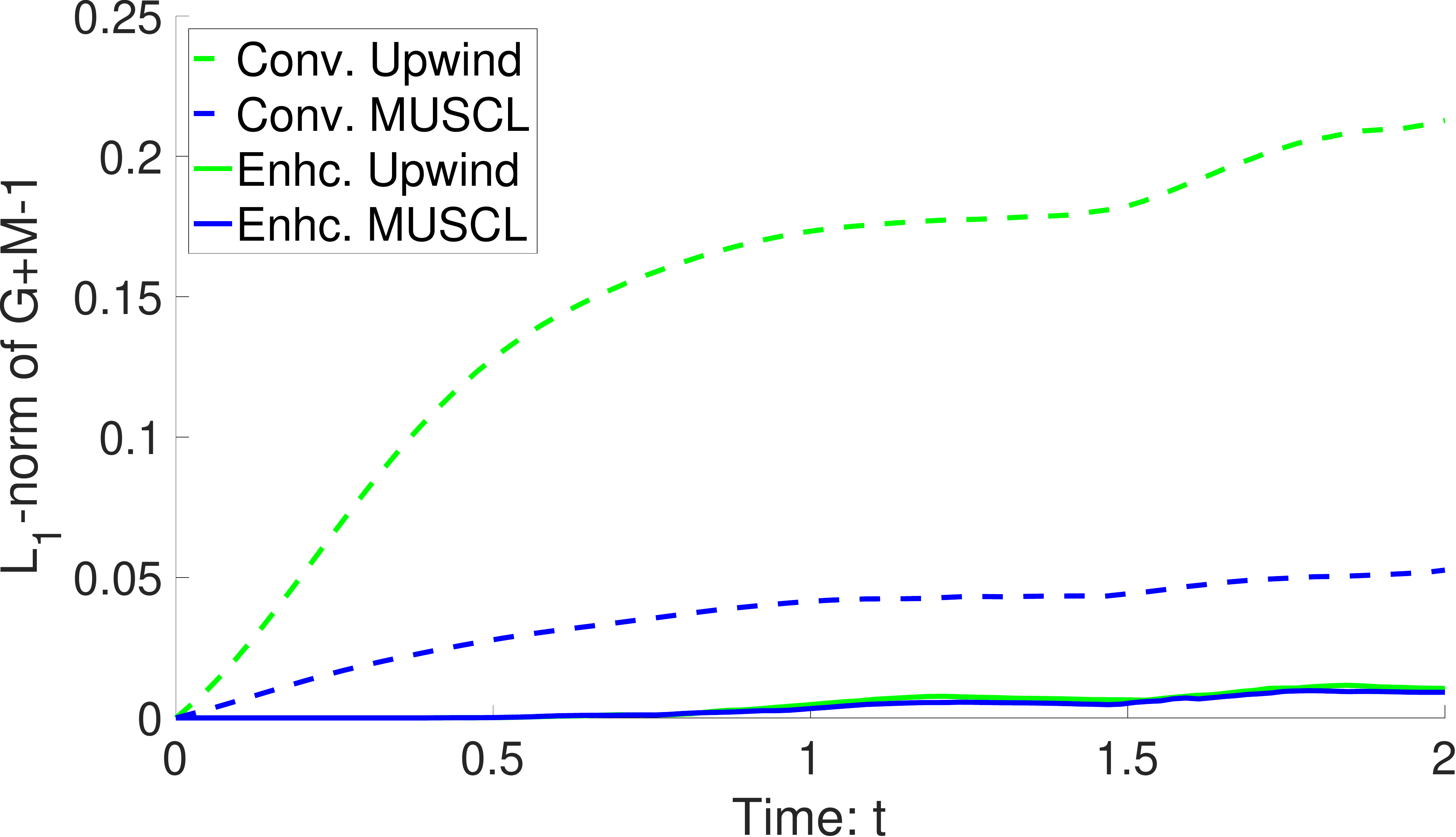}
    \caption{Histories of $d_\theta$.}
    \label{fg:num_mod_4_sol_inc}
  \end{subfigure}
  \vspace*{-.1in}
  \caption{Solutions to test 4 on a $50$-interval grid.}
  \label{fg:num_mod_4_sol}
\end{figure}
Similar as before, the enhanced methods produce much smoother solutions than the conventional ones, and the overshoots near the origin is in much smaller magnitudes.
Figure~\ref{fg:num_mod_4_sol_inc} reveals that the incompressibility constraint is much better preserved by the enhanced methods, whose small violation is due to the boundary conditions.

For this test, the overshoot in $G$ near the origin by the conventional methods increases rapidly as the mesh is refined, and eventually kill the computations when $400$ uniform intervals are used to discretize the domain.
In Figure~\ref{fg:num_mod_4_inc_fine} we plot the $d_\theta$ histories on a $100$-interval grid and a $200$-interval grid in the left panel and right panel, respectively.
Table~\ref{tb:num_mod_4_rad} summarizes the final radii by all methods on the sequence of the four grids; note that we do not compute the numerical error as before since the exact value is unknown.
\begin{figure}\centering
  \begin{subfigure}[b]{.48\textwidth}\centering
    \includegraphics[width=.8\textwidth]{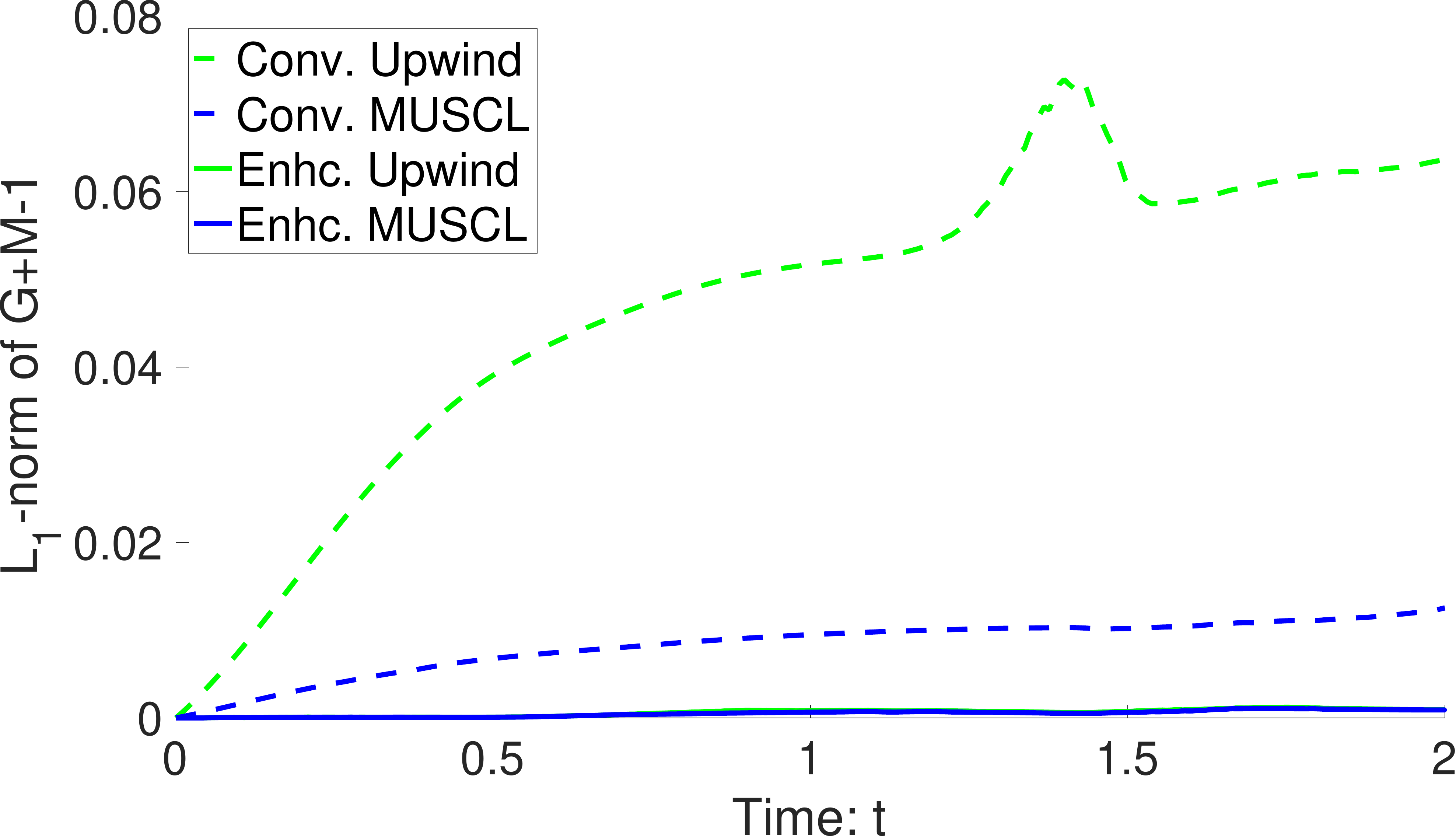}
    \caption{$d_\theta$ histories on a $200$-interval grid.}
    \label{fg:num_mod_4_inc_fine_200}
  \end{subfigure}
  \begin{subfigure}[b]{.48\textwidth}\centering
    \includegraphics[width=.8\textwidth]{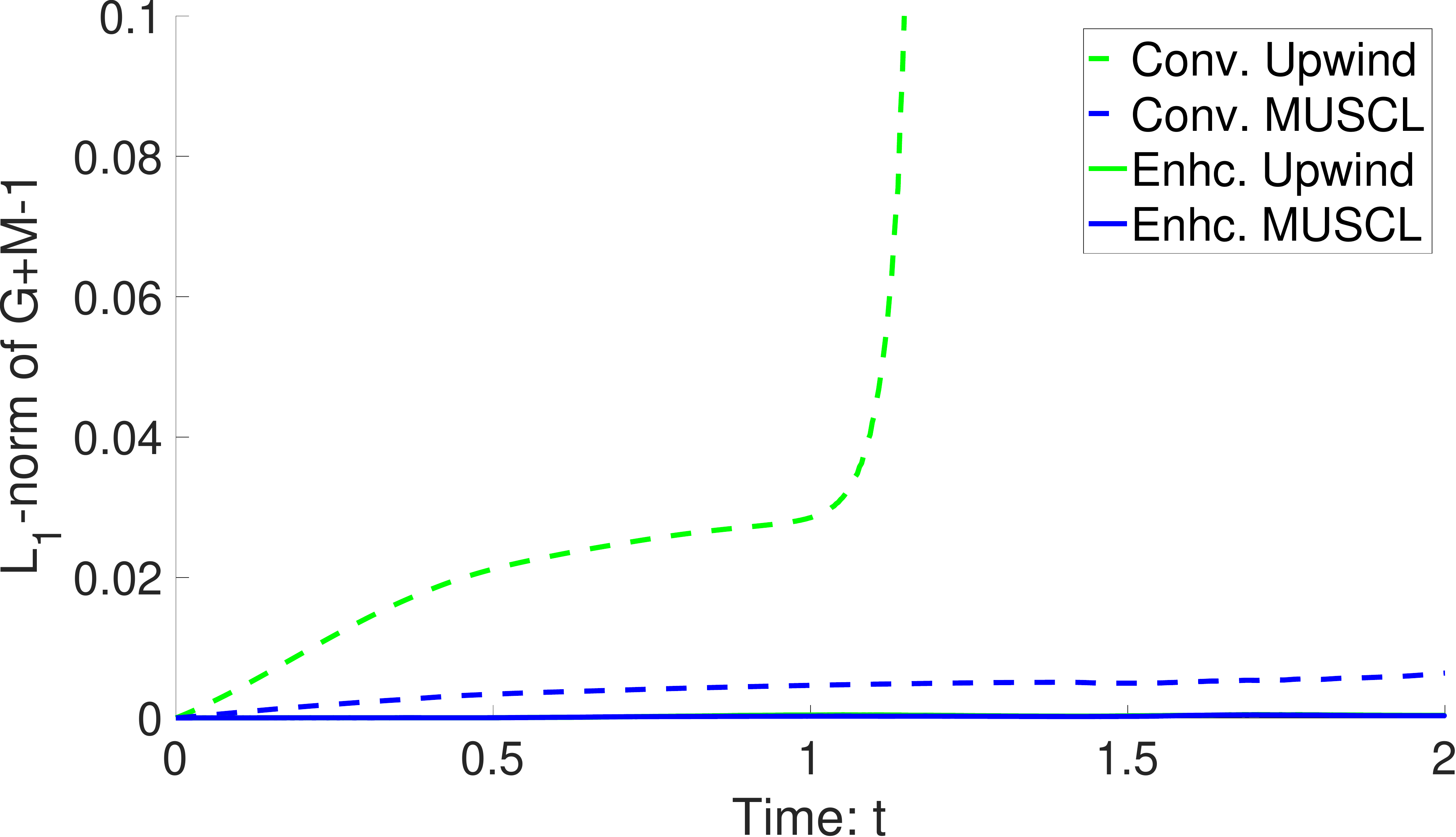}
    \caption{$d_\theta$ histories on a $400$-interval grid.}
    \label{fg:num_mod_4_inc_fine_400}
  \end{subfigure} 
  \vspace*{-.1in}
  \caption{$d_\theta$ histories of test 4 on two finer grids.}
  \label{fg:num_mod_4_inc_fine}
\end{figure}
\begin{table}\centering
\caption{Numerical solutions of the final radius of test 3 at $T=2.0$.}
\label{tb:num_mod_4_rad}
\vspace*{-.1in}
\begin{tabular}{|c|c|c|c|c|}
\hline
$N_{\eta}$ & Conv. Upwind 
           & Conv. MUSCL
           & Enhc. Upwind 
           & Enhc. MUSCL \\ \hline
$50$  & 1.0777       & 1.0757 & 1.0713 & 1.0711 \\  
$100$ & 1.0763       & 1.0747 & 1.0735 & 1.0734 \\ 
$200$ & 1.0754       & 1.0745 & 1.0745 & 1.0745 \\ 
$400$ & \textrm{nan} & 1.0747 & 1.0749 & 1.0749 \\ \hline
\end{tabular}
\end{table}
In the table, no data is reported for the conventional upwind method since the computation breaks up around $t=1.2$, c.f. Figure~\ref{fg:num_mod_4_inc_fine_400};
in addition, the conventional MUSCL method seems to produce non-monotone ``convergence'' pattern, whereas both enhanced methods seem to provide monotonic and convergent solutions.

\subsection{The tumor growth model}
\label{sec:num_tum}
Next we consider the tumor growth model~(\ref{eq:rev_eqn}) and our first test revisits the case study in Section~\ref{sec:rev_case}.
Then, a set of parameters are chosen according to the study of~\cite{BNiu:2018a} to assess the impact of using the enhanced methods in practical predictions.
The PDE system~(\ref{eq:rev_eqn}) involves one more equation for the velocity field $U$; for all the four methods including the enhanced ones, we use the same discretization method as described in Section~\ref{sec:rev_fvm} to update $A$.

\subsubsection{The case study revisited}
\label{sec:num_tum_case}
Using the same initial and boundary conditions as in Section~\ref{sec:rev_case}, the tumor growth model~(\ref{eq:rev_eqn}) is solved by the four methods until $T=1.0$.
The sample solutions on a grid of $50$ uniform intervals are plotted in Figure~\ref{fg:num_tum_case_sol}.
\begin{figure}\centering
  \begin{subfigure}[b]{.48\textwidth}\centering
    \includegraphics[width=.8\textwidth]{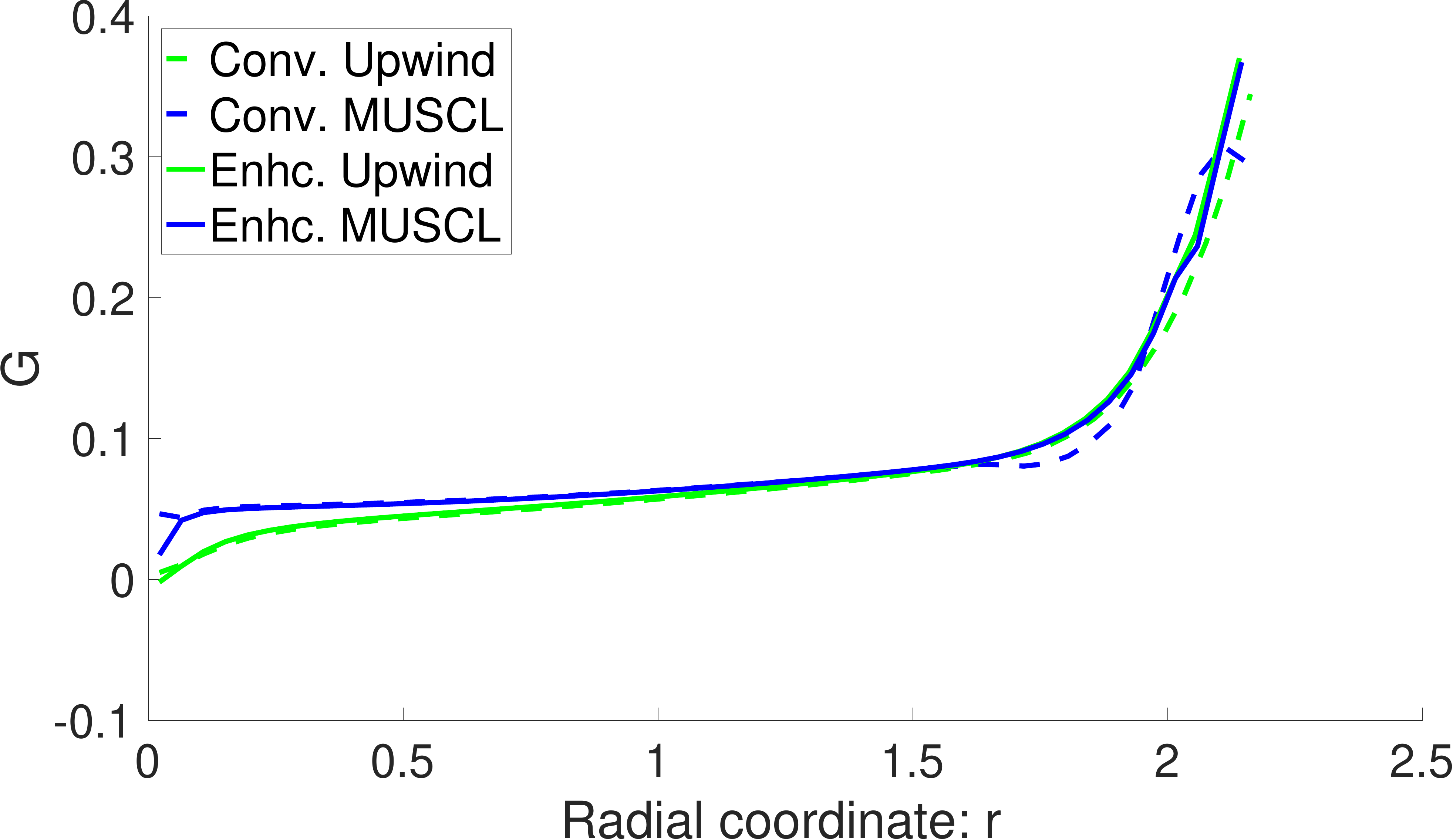}
    \caption{Cell numbers for $G$ at $T=2.0$.}
    \label{fg:num_tum_case_sol_g}
  \end{subfigure}
  \begin{subfigure}[b]{.48\textwidth}\centering
    \includegraphics[width=.8\textwidth]{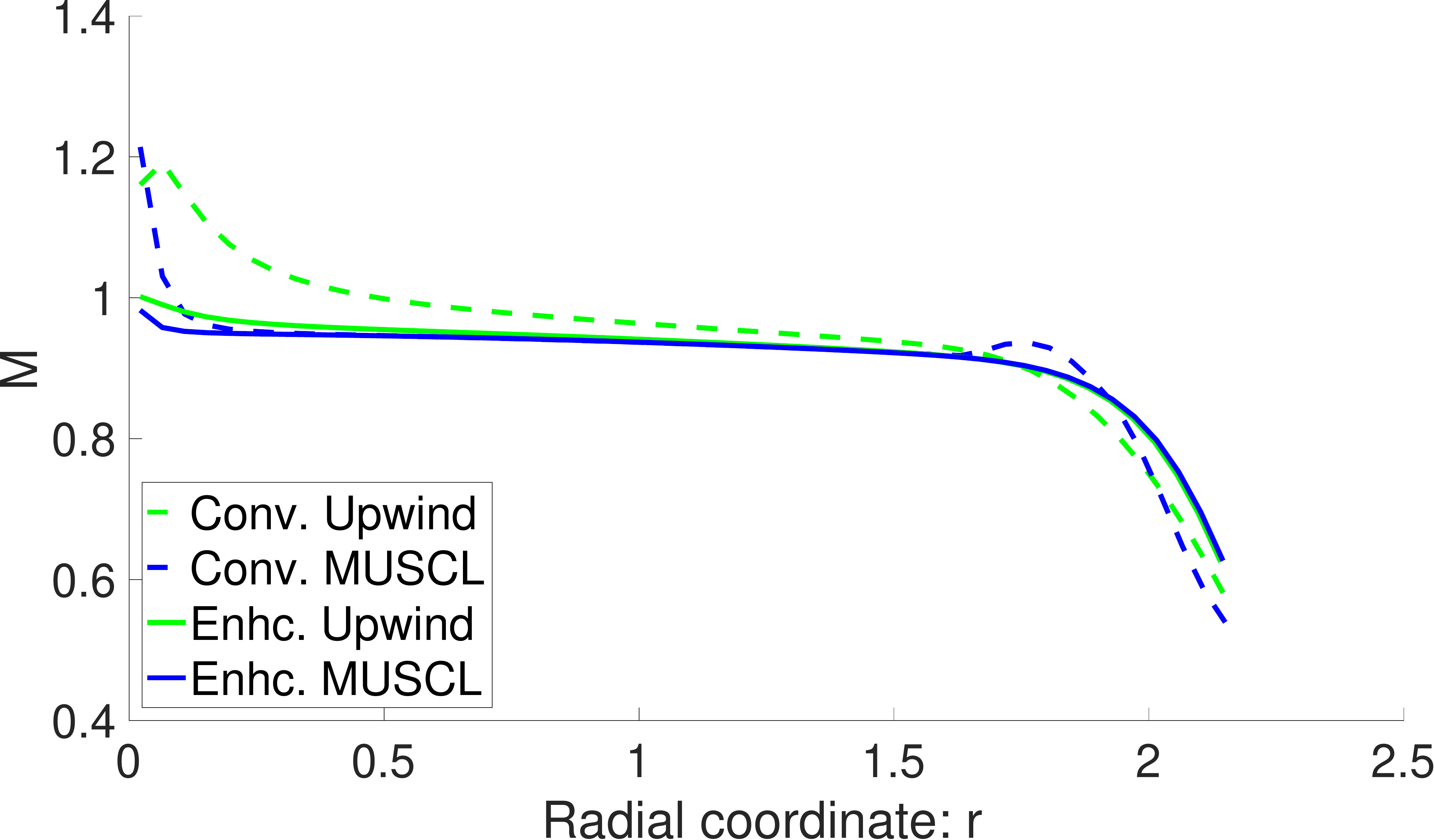}
    \caption{Cell numbers for $M$ at $T=2.0$.}
    \label{fg:num_tum_case_sol_m}
  \end{subfigure} \\
  \begin{subfigure}[b]{.48\textwidth}\centering
    \includegraphics[width=.8\textwidth]{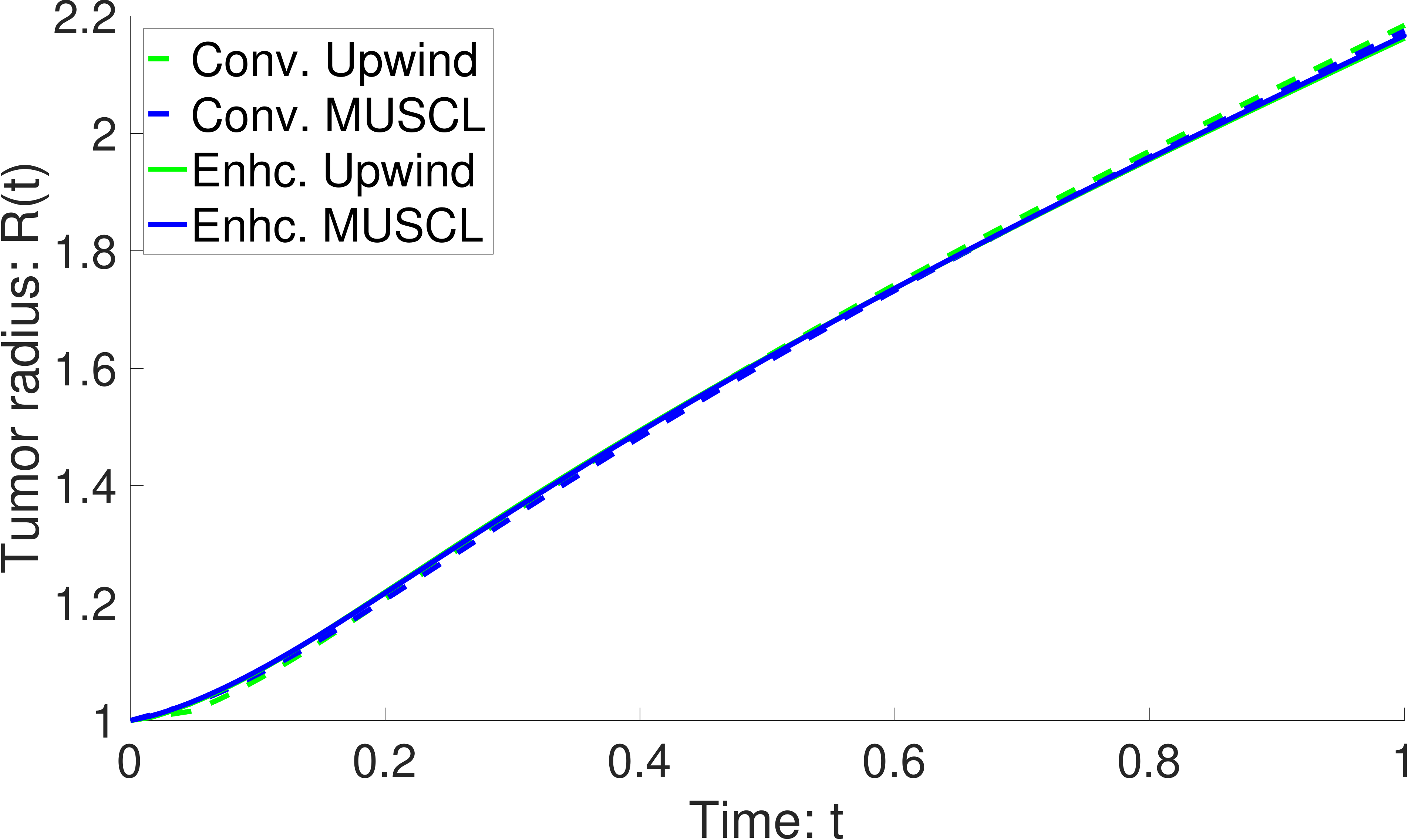}
    \caption{Radius growth history.}
    \label{fg:num_tum_case_sol_rad}
  \end{subfigure}
  \begin{subfigure}[b]{.48\textwidth}\centering
    \includegraphics[width=.8\textwidth]{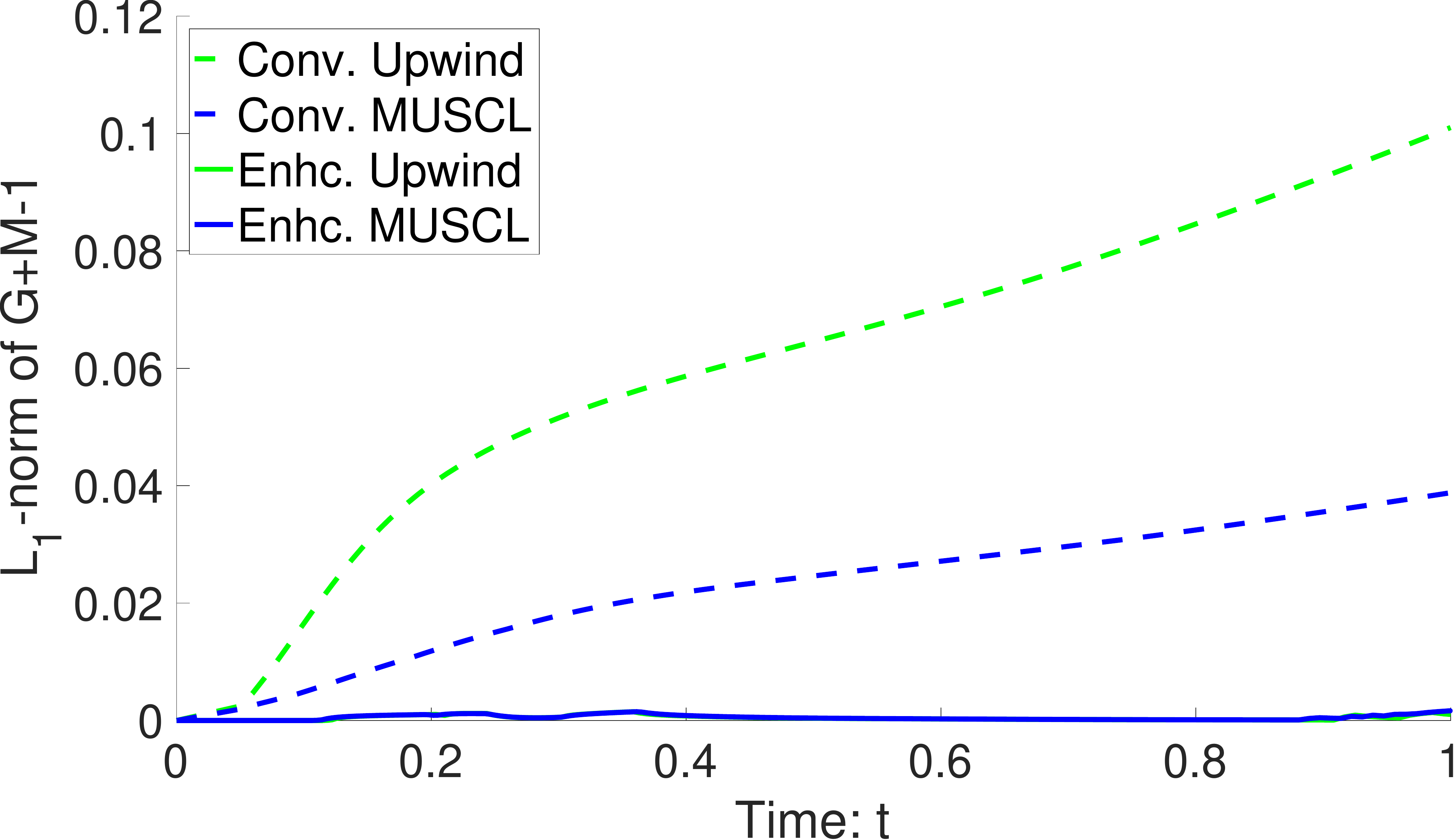}
    \caption{Histories of $d_\theta$.}
    \label{fg:num_tum_case_sol_inc}
  \end{subfigure}
  \vspace*{-.1in}
  \caption{Solutions to the tumor case study on a $50$-interval grid.}
  \label{fg:num_tum_case_sol}
\end{figure}
From Figure~\ref{fg:num_tum_case_sol_inc} we clearly see that the enhanced methods lead to much smaller incompressibility violation than the conventional methods.
The numerical solutions show similar pattern as the simpler model in Section~\ref{sec:num_mod_3}; and we observe alike oscillations in the solutions by the conventional methods.
Nevertheless, the radius growth histories seem to compare well among different methods; and this observation is made more precise by Table~\ref{tb:num_tum_case_rad}, which summarizes the terminal radii at $T=1.0$ by all four methods on a sequence of four grids.
\begin{table}\centering
\caption{Numerical final radius of the tumor case study at $T=1.0$.}
\label{tb:num_tum_case_rad}
\vspace*{-.1in}
\begin{tabular}{|c|c|c|c|c|}
\hline
$N_{\eta}$ & Conv. Upwind 
           & Conv. MUSCL
           & Enhc. Upwind 
           & Enhc. MUSCL \\ \hline
$50$  & 2.1843 & 2.1748 & 2.1625 & 2.1670 \\  
$100$ & 2.1809 & 2.1737 & 2.1666 & 2.1689 \\ 
$200$ & 2.1763 & 2.1718 & 2.1682 & 2.1693 \\ 
$400$ & 2.1730 & 2.1705 & 2.1688 & 2.1693 \\ \hline
\end{tabular}
\end{table}

\subsubsection{The tumor problem of~\cite{BNiu:2018a}}
\label{sec:num_tum_prev}
Finally, we consider the tumor model describing the PDGF-driven glioma cells in the previous work~\cite{BNiu:2018a}, where the set of parameters are chosen so that the survival length matches experimental data.
Here the survival length is defined as the time $T_{\term}$ when the radius reaches $5.0\;\texttt{mm}$.
The parameters and their dimensions are summarized in Table~\ref{tb:num_tum_prev_par}.
\begin{table}\centering
\caption{Parameters of the PDGF-driven glioma problem. For the biological interpretation of each parameter, the readers are referred to~\cite{BNiu:2018a}.}
\label{tb:num_tum_prev_par}
\vspace*{-.1in}
\begin{tabular}{|c|c|c|}
\hline
Parameter & Value           & Dimension                              \\ \hline
$\lambda$ & 0.48            & $\textrm{day}^{-1}$                    \\
$\mu$     & 0.33            & $\textrm{day}^{-1}$                    \\
$\delta$  & 0.45            & $\textrm{day}^{-1}$                    \\
$\rho$    & 0.9             & $\textrm{day}^{-1}$                    \\
$\nu$     & 6.048           & $\textrm{mm}^2\cdot\textrm{day}^{-1}$  \\
$m$       & 1.5e+5          & $\textrm{pg}\cdot\textrm{ml}^{-1}\cdot\textrm{day}^{-1}$ \\
$\beta$   & 1.0e+5          & $\textrm{cell}\cdot\textrm{mm}^{-3}$   \\
$\gamma$  & 1.0e+2          & $\textrm{day}^{-1}$                    \\
$\alpha$  & 0.6             
          & $\textrm{mm}^2\cdot\textrm{ml}\cdot\textrm{day}^{-1}\cdot\textrm{pg}^{-1}$ \\
$\theta$  & 1.0e+6          & $\textrm{cell}\cdot\textrm{mm}^{-3}$   \\ \hline
\end{tabular}
\end{table}

The initial tumor size is given by $R(0)=0.2\;\textrm{mm}$ and other initial conditions are:
\begin{align*}
  &G(r,0) = 0.84\;\theta,\ 
  H(r,0) = 0.155\;\theta,\ 
  M(r,0) = 0.005\;\theta,\quad &r\in[0,\;R(0))\;; \\
  &A(r,0) = 1000\;\exp(-r^2)\;,&r\in[0,\;+\infty)\;.
\end{align*}
The boundary condition for $M$ describes the environmental number density for the immune cells: $M_{\bc}(t)=0.005\;\theta$.
Numerical solutions to this problem on a uniform grid of $50$ intervals are plotted in the left panel of Figure~\ref{fg:num_tum_prev_sol}, where the solutions of the radius, the glioma cells ($G$), and the total number of cells ($G+H+M$) at $T_{\term}$ are plotted from top to bottom.
For comparison, the solutions computed on a uniform grid with $200$ intervals are provided side-by-side in the right panel of the same figure.
\begin{figure}\centering
  \begin{subfigure}[b]{.96\textwidth}\centering
    \includegraphics[width=.4\textwidth]{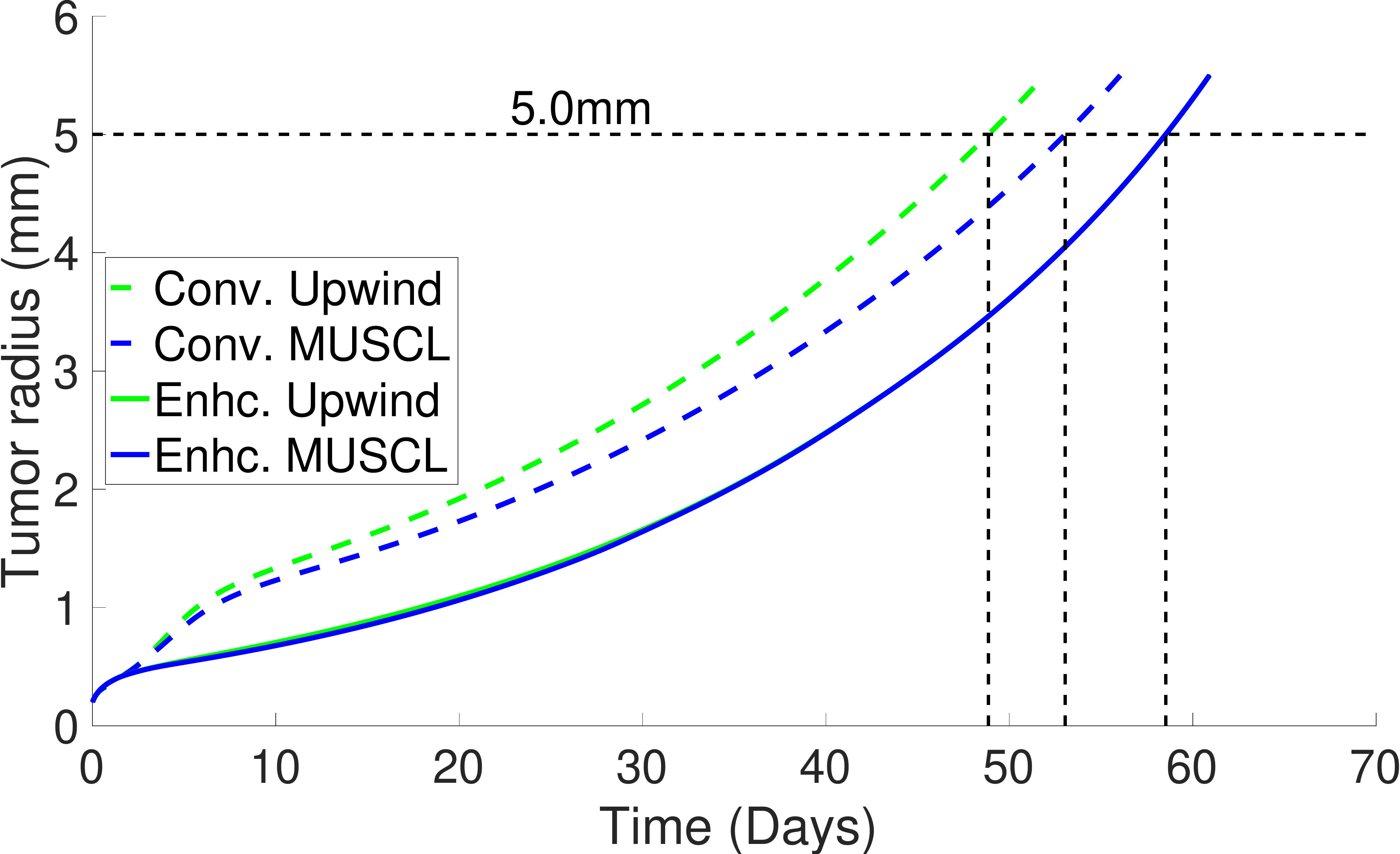}$\qquad$
    \includegraphics[width=.4\textwidth]{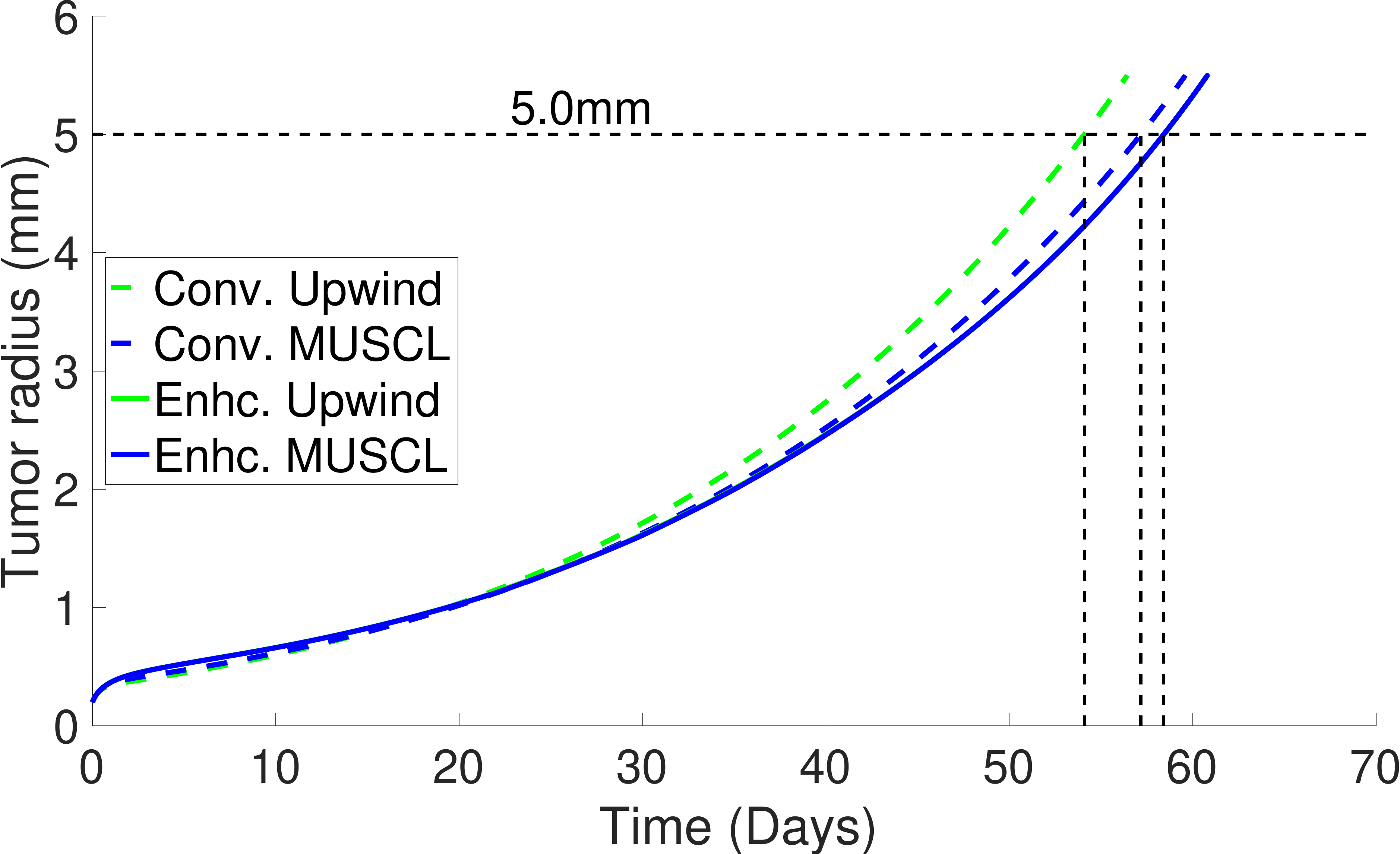}
    \caption{Radius growth histories.}
    \label{fg:num_tum_prev_sol_rad}
  \end{subfigure} \\
  \begin{subfigure}[b]{.96\textwidth}\centering
    \includegraphics[width=.4\textwidth]{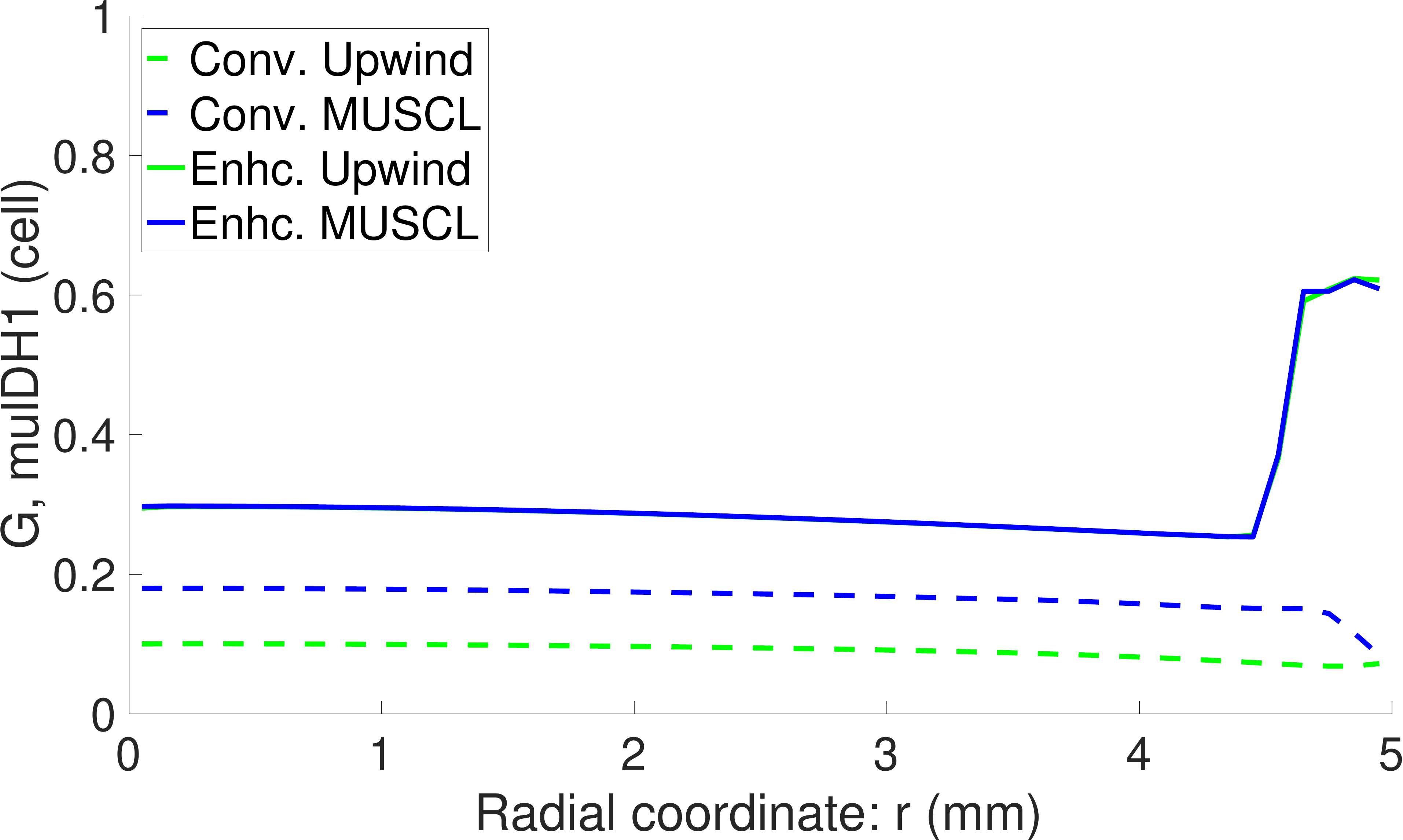}$\qquad$
    \includegraphics[width=.4\textwidth]{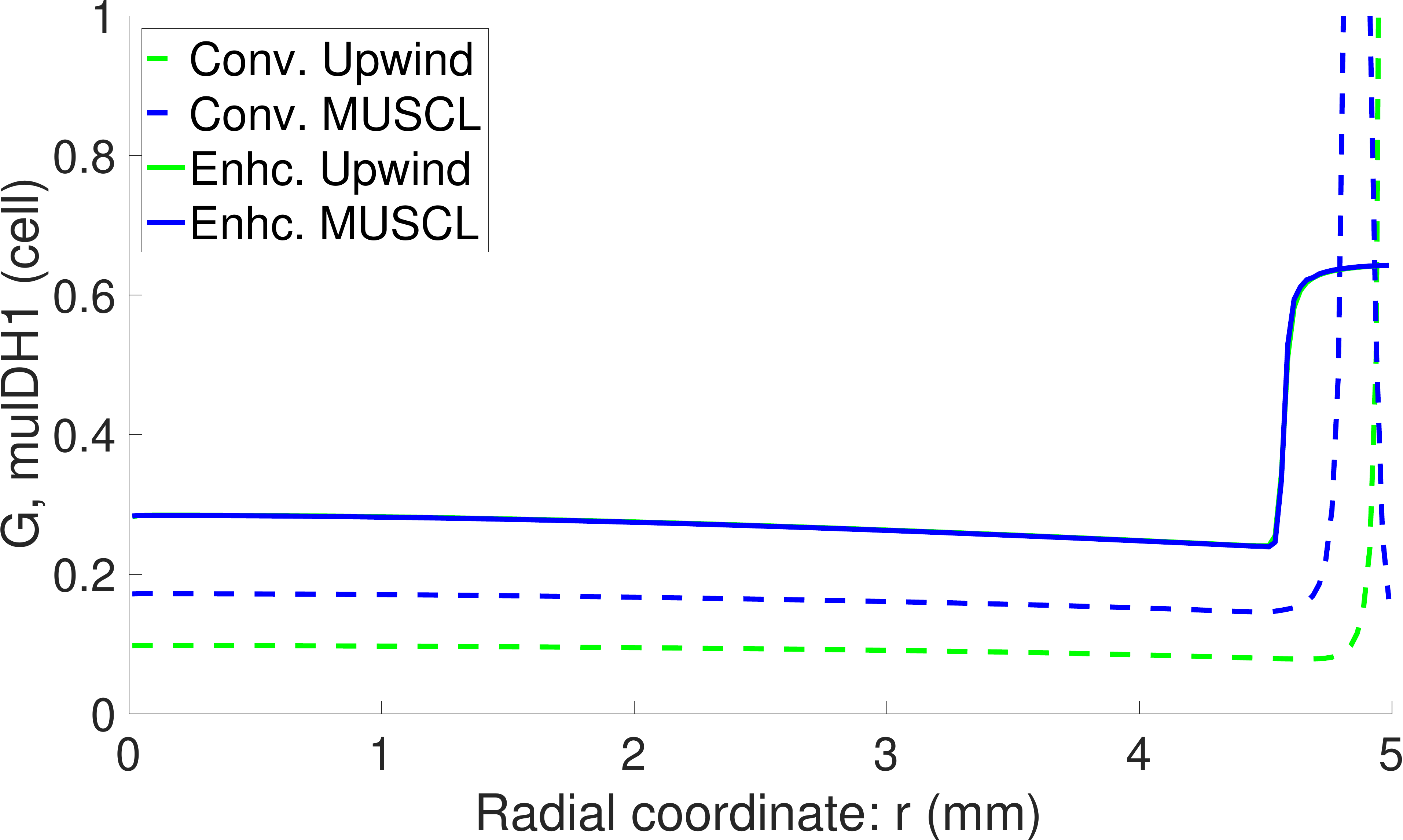}
    \caption{Glioma cell distributions at $T_{\term}$.}
    \label{fg:num_tum_prev_sol_g}
  \end{subfigure} \\
  \begin{subfigure}[b]{.96\textwidth}\centering
    \includegraphics[width=.4\textwidth]{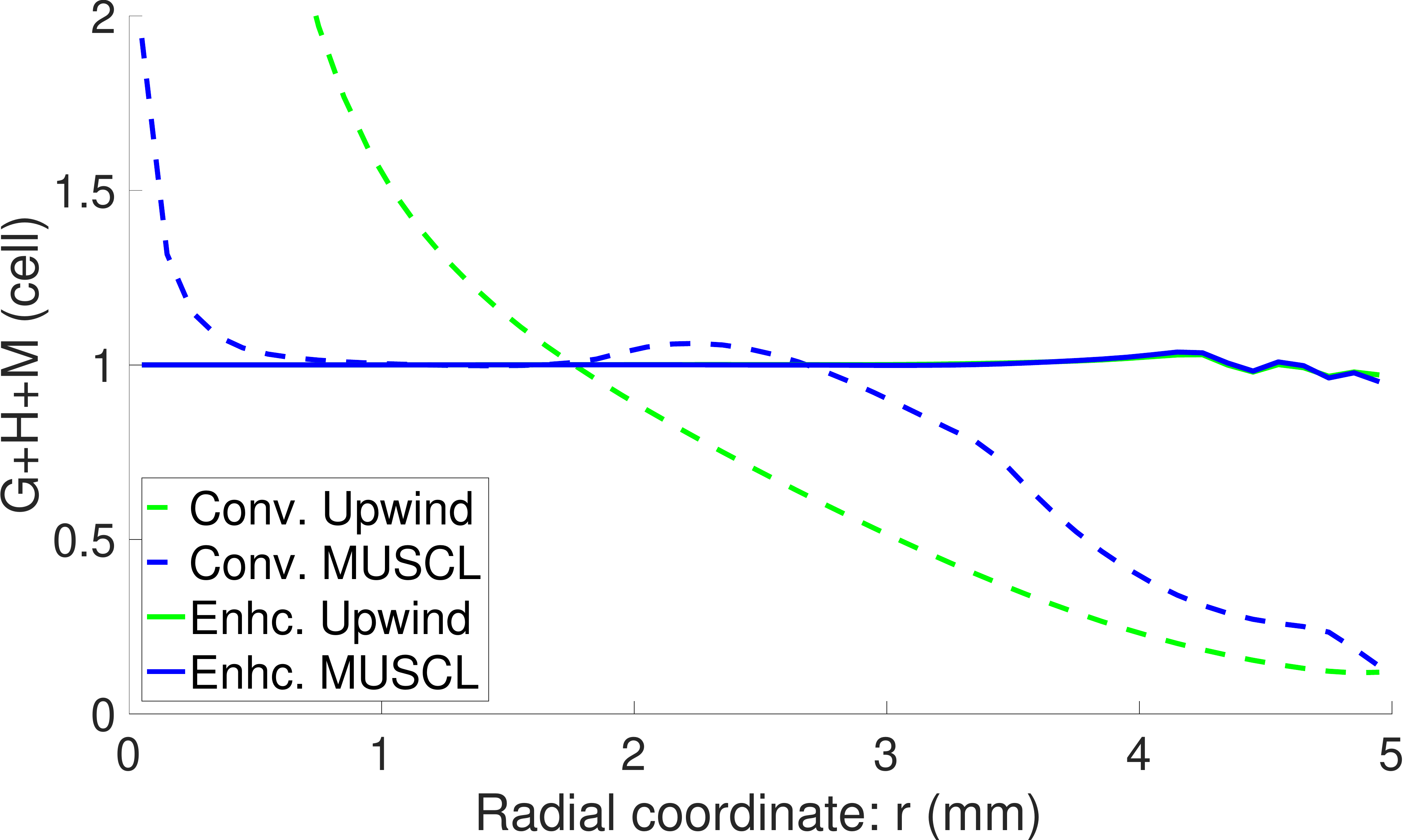}$\qquad$
    \includegraphics[width=.4\textwidth]{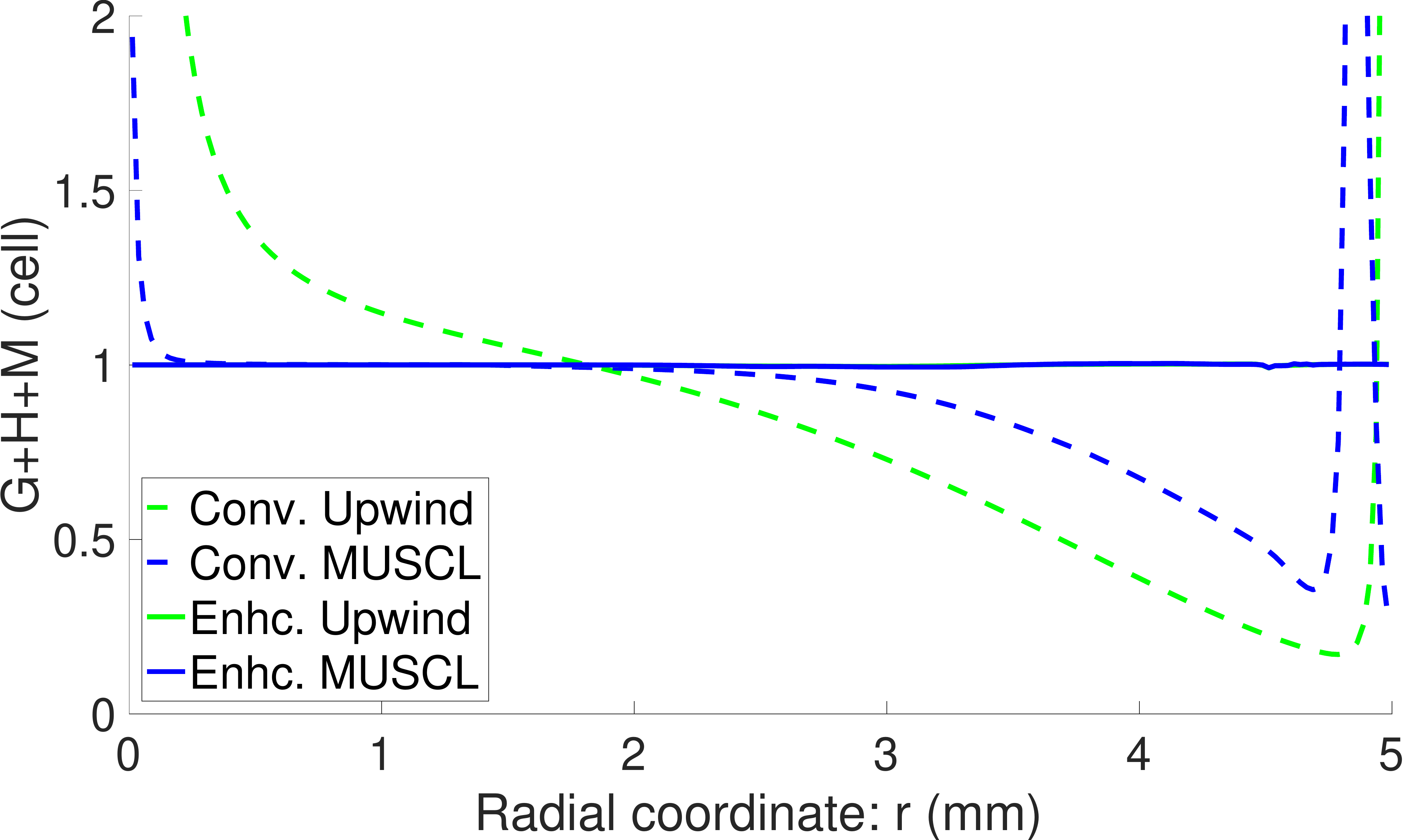}
    \caption{Total cell distributions at $T_{\term}$.}
    \label{fg:num_tum_prev_sol_all}
  \end{subfigure}
  \vspace*{-.1in}
  \caption{Numerical solutions on two grids: (left) $50$ uniform intervals, (right) $200$ uniform intervals.}
  \label{fg:num_tum_prev_sol}
\end{figure}
Although the exact solutions to this problem is unknown, we have the following observations:
\begin{itemize}
  \item The conventional methods seem to underestimate the growth rate of the tumor; and the modified methods provide much faster convergent results, c.f. Figure~\ref{fg:num_tum_prev_sol_rad}.
  \item The solutions to cell species are very different between the conventional methods and the enhanced ones -- particularly near the tumor boundary the conventional FVMs produce overshoots that grows significantly on finer grids, whereas the enhanced ones predict a flat plateau, that could possibly represent the ``rim'' that is reported in many existing studies~\cite{JJCasciari:1992a}. See Figure~\ref{fg:num_tum_prev_sol_g}.
  \item The incompressibility condition is severely violated by both conventional methods; whereas the enhanced ones respect this constraint very nicely, see Figure~\ref{fg:num_tum_prev_sol_all}.
\end{itemize}
Quantitative comparisons are provided in Table~\ref{tb:num_tum_prev_term}, which summarizes the survival length $T_{\term}$ computed by all four methods on a sequence of four grids.
\begin{table}\centering
\caption{Survival length ($\textrm{day}$) of the PDGF-driven model.}
\label{tb:num_tum_prev_term}
\vspace*{-.1in}
\begin{tabular}{|c|c|c|c|c|}
\hline
$N_{\eta}$ & Conv. Upwind 
           & Conv. MUSCL
           & Enhc. Upwind 
           & Enhc. MUSCL \\ \hline
$50$  & 48.8644 & 53.0510 & 58.5284 & 58.5276 \\  
$100$ & 50.4446 & 54.5993 & 58.4433 & 58.4408 \\ 
$200$ & 54.0926 & 57.1748 & 58.4223 & 58.4212 \\ 
$400$ & 57.6348 & 58.4293 & 58.4177 & 58.4174 \\ \hline
\end{tabular}
\end{table}
This table reveals that although the conventional methods predict $T_{\term}$ that is somewhat different from that by the enhanced methods, the predictions converge nevertheless to the same value as the grid is refined.
Hence we claim that the enhanced methods indeed improve the accuracy of the numerical simulation.

Finally, it is worth noting that although the conventional methods compute very different solutions in the cell numbers, they seem to give reasonable predictions on the tumor growth curves, which explains why the numerical results compare reasonably well to experimental data in the previous work~\cite{BNiu:2018a}.
To close this section, we provide an explanation to the phenomenon by utilizing a relation that is analogous to~(\ref{eq:model_ode}).
Particularly, when the tumor grows monotonically as in the present case, a simpler formula determining the growth pattern is:
\begin{displaymath}
  R'(t) = -\alpha\pp{A(R(t),t)}{r}M_{\bc}(t)\;.
\end{displaymath}
Since $M_{\bc}$ is a constant, the growth is determined by the chemoattractant concentration gradient at the tumor boundary.
Because $A$ is governed by the diffusion-reaction equation~(\ref{eq:rev_eqn_a}), its profile is less affected by different cell number solutions in $G$.
Particularly, we plot numerical solutions to $\pp{A(R(t),t)}{r}$ by various methods in Figure~\ref{fg:num_tum_prev_da}, and see that the difference between the conventional methods and enhanced methods is less significant than the difference in the cell number solutions.
\begin{figure}\centering
  \begin{subfigure}[b]{.48\textwidth}\centering
    \includegraphics[width=.8\textwidth]{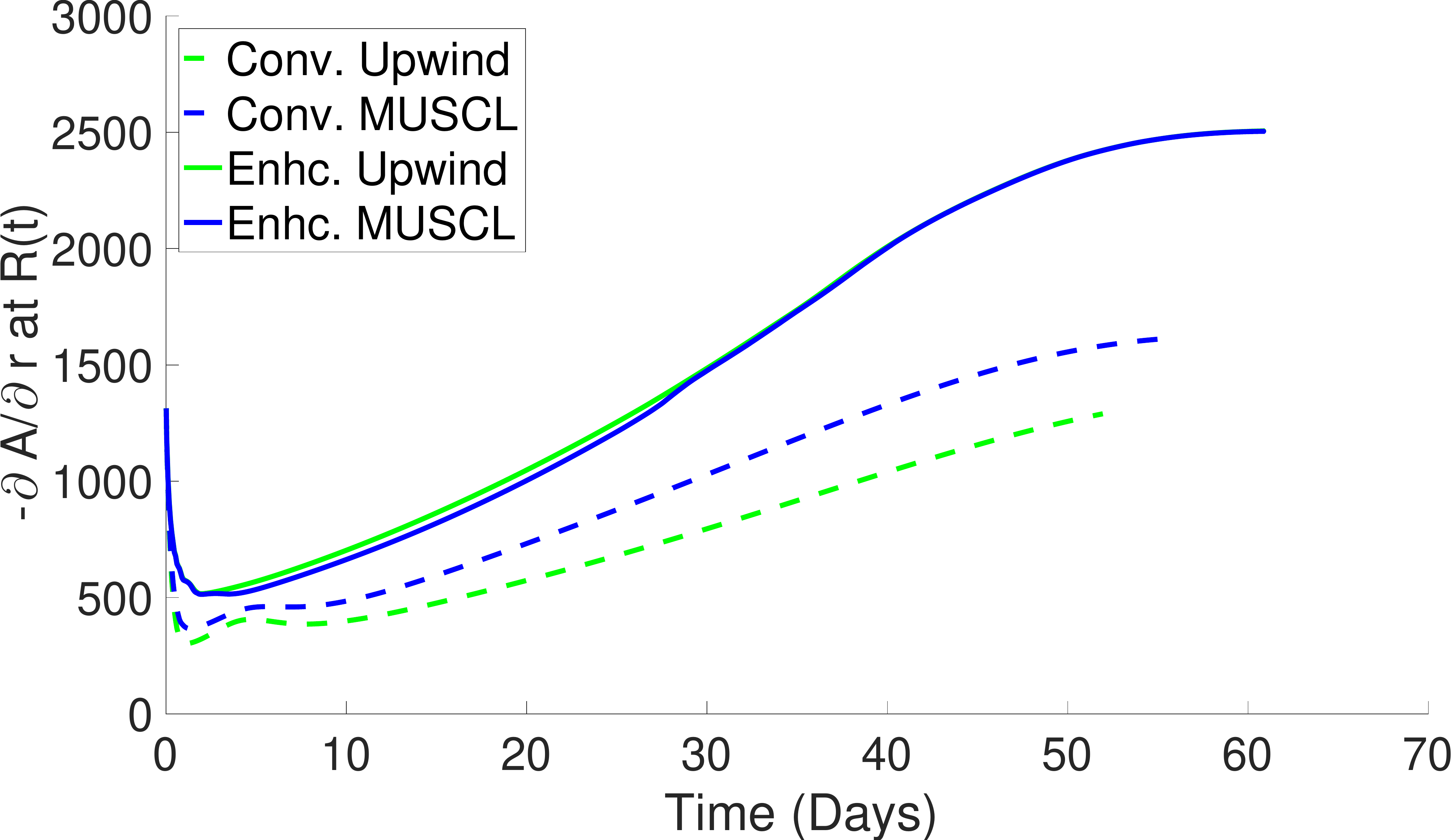}
    \caption{$50$-interval grid.}
    \label{fg:num_tum_prev_da_050}
  \end{subfigure}
  \begin{subfigure}[b]{.48\textwidth}\centering
    \includegraphics[width=.8\textwidth]{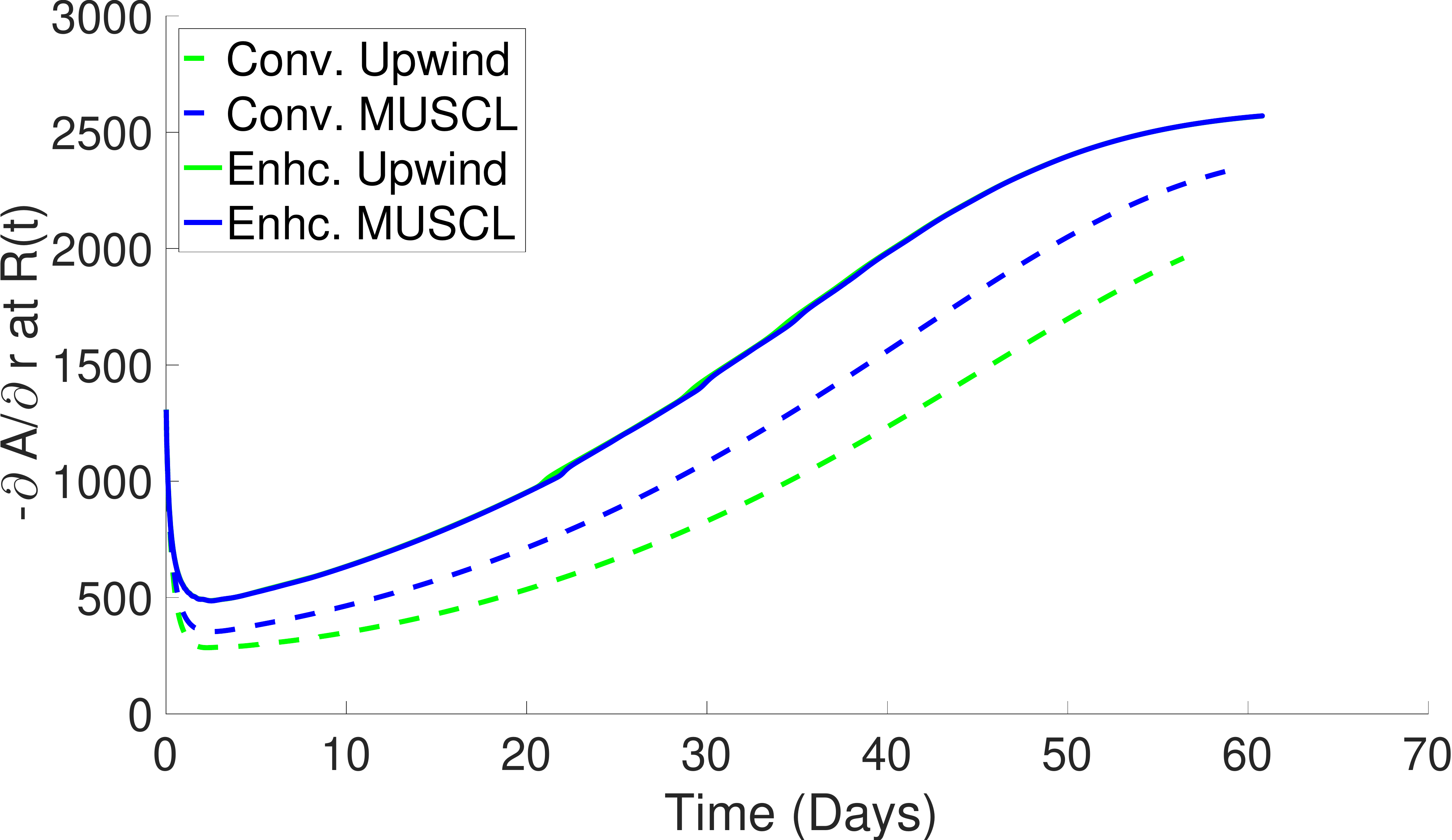}
    \caption{$200$-interval grid.}
    \label{fg:num_tum_prev_da_200}
  \end{subfigure}
  \vspace*{-.1in}
  \caption{Histories of $\pp{A(R(t),t)}{r}$ on two uniform grids.}
  \label{fg:num_tum_prev_da}
\end{figure}

\section{Conclusions}
\label{sec:concl}
We propose a finite volume framework with segregated fluxes for numerical computation of free boundary problems that model infiltration dynamics in spherically symmetric tumor growth.
Under this framework, sufficient conditions for ensuring the geometric conservation law on a moving grid and the incompressibility constraint are derived; and classical first-order and second-order finite volume methods are enhanced following these requirements.
The numerical performance of the enhanced methods are assessed by several representative tests, either for a simplified model or a full PDGF-driven tumor growth model; and their solutions exhibit significant improvements over those by conventional methods.
More importantly, the cell-incompressibility condition is well respected by the enhanced methods but not by the conventional one; and it is shown to be crucial to deliver convergent and stable solutions on refined grids.

It worth noting that, although the MUSCL-type methods generally produce more accurate solutions than the upwind ones, they do not deliver second-order convergence even when the solutions are smooth.
This is probably due to the integro-differential nature of the governing equation; and how to improve the second-order methods will be addressed in future work.
Nevertheless, the methodology to ensure the DGCL and DTCL properties for MUSCL-based methods is expected to remain the same; hence it is addressed in the current paper instead of in future publications.

\section*{Acknowledgements}
X. Zeng would like to thank University of Texas at El Paso for the start up support.
P. Tian would like to thank the National Science Foundation of US for the support in mathematical modeling under the grant number DMS-1446139.

\bibliographystyle{plain}      

\appendix

\section{Splitted velocities for advection equations}
\label{app:split}
In Section~\ref{sec:fvm}, we split the advection velocity for species and compute the fluxes separately. 
In this section, we briefly investigate the stability associated with the splitting strategy by the classical von Neumann analysis.
Since $i$ is used to denote the imaginary unit, we'll use $j$ to denote the grid index. 
Let us consider the following one-dimensional advection equation:
\begin{equation}\label{eq:split_adv}
  \pp{G}{\tau} + V\pp{G}{\eta} = 0\;,
\end{equation}
where $G$ is the advected quantity and $V$ is the constant advection velocity.
Following the splitting strategy, we rewrite $V$ as the sum of $K$ constants:
\begin{equation}\label{eq:split_vel}
  V = V_1 + V_2 + \cdots + V_K
\end{equation}
and solve the corresponding equation by the first-order upwind fluxes and first-order forward Euler time-integrator:
\begin{equation}\label{eq:split_upw_fe}
  \frac{G^{n+1}_{j-1/2}-G^n_{j-1/2}}{\Delta\tau} + \sum_{k=1}^K\frac{\mathcal{F}^{\upw}(V_k;\;G_{j-1/2}^n,\;G_{j+1/2}^n)-\mathcal{F}^{\upw}(V_k;\;G_{j-3/2}^n,\;G_{j-1/2}^n)}{\Delta\eta} = 0\;,
\end{equation}
where $\Delta\tau$ is the time step size and $\mathcal{F}^{\upw}$ is given by (\ref{eq:rev_fvm_upw}).

Clearly, the fluxes collapse into two groups, namely those associated with positive velocities and those associated with negative ones.
Denoting $V_+ = \sum_{1\le k\le K,\;V_k>0}V_k$ and $V_- = \sum_{1\le k\le K,\;V_k<0}V_k$, (\ref{eq:split_upw_fe}) simplifies to:
\begin{equation}\label{eq:split_num}
  \frac{G^{n+1}_{j-1/2}-G^n_{j-1/2}}{\Delta\tau} + \frac{V_+(G^n_{j-1/2}-G^n_{j-3/2})}{\Delta\eta} + \frac{V_-(G^n_{j+1/2}-G^n_{j-1/2})}{\Delta\eta} = 0\;.
\end{equation}
Following the standard von Neumann analysis, we write:
\begin{equation}\label{eq:split_vn}
  G_{j-1/2}^n = a^ne^{ij\kappa\Delta\eta}\;,
\end{equation}
where $a$ is the so called amplifier coefficient and $\kappa$ is the arbitrary wave number; the numerical method is stable if and only if there is a $\Delta\tau_c>0$, such that for all $0\le\Delta\tau\le\Delta\tau_c$ we have $\abs{a}\le1$ for all $\kappa\in\mathbb{R}$.

Denoting $\theta=\kappa\Delta\eta$ for simplicity, plugging (\ref{eq:split_vn}) into (\ref{eq:split_num}) we have:
\begin{displaymath}
  \frac{a^{n+1}e^{ij\theta}-a^ne^{ij\theta}}{\Delta\tau} + \frac{V_+}{\Delta\eta}(a^ne^{ij\theta}-a^ne^{i(j-1)\theta}) + \frac{V_-}{\Delta\eta}(a^ne^{i(j+1)\theta}-a^ne^{ij\theta}) = 0\;;
\end{displaymath}
and it follows that:
\begin{displaymath}
  a = 1 - \frac{V_+\Delta\tau}{\Delta\eta}(1-e^{-i\theta})-\frac{V_-\Delta\tau}{\Delta\eta}(e^{i\theta}-1)\;.
\end{displaymath}
Let the Courant numbers corresponding to $V_+$ and $V_-$ be $\alpha_+=V_+\Delta\tau/\Delta\eta\ge0$ and $\alpha_-=-V_-\Delta\tau/\Delta\eta\ge0$, respectively, it is easy to compute that:
\begin{align*}
  a &= 1 - (\alpha_++\alpha_-)(1-\cos\theta) + i(\alpha_+-\alpha_-)\sin\theta \quad\Rightarrow\\
  \abs{a}^2 &= (1-(\alpha_++\alpha_-)(1-\cos\theta))^2 + ((\alpha_+-\alpha_-)\sin\theta)^2 \\ 
            &= (1-(\alpha_++\alpha_-)(1-\cos\theta))^2 + ((\alpha_++\alpha_-)\sin\theta)^2 - 4\alpha_+\alpha_-\sin^2\theta \\
            &= 1 - 2(\alpha_++\alpha_-)(1-\alpha_+-\alpha_-)(1-\cos\theta)-4\alpha_+\alpha_-\sin^2\theta \\
            &= 1 - \left[2(\alpha_++\alpha_-)(1-\alpha_+-\alpha_-)+4\alpha_+\alpha_-(1+\cos\theta)\right](1-\cos\theta)\;;
\end{align*}
it follows that $\abs{a}\le1$ for all $\theta\in\mathbb{R}$ if and only if for these $\theta$:
\begin{displaymath}
2(\alpha_++\alpha_-)(1-\alpha_+-\alpha_-)+4\alpha_+\alpha_-(1+\cos\theta)\ge0\;,
\end{displaymath}
or equivalently, $\alpha_++\alpha_-\le1$.
Hence the explicit split method is conditionally stable and the Courant condition is:
\begin{equation}\label{eq:split_cfl}
(\abs{V_+}+\abs{V_-})\Delta\tau\le\Delta\eta\quad\textrm{ or }\quad
\left(\sum_{k=1}^K\abs{V_k}\right)\Delta\tau\le\Delta\eta\;.
\end{equation}

\medskip 

Similarly, using the implicit first-order backward Euler time-integrator instead, one computes:
\begin{displaymath}
 a = \frac{1}{1+\alpha_+(1-e^{-i\theta})-\alpha_-(e^{i\theta}-1)}
   = \frac{1}{1+(\alpha_++\alpha_-)(1-\cos\theta) + i(\alpha_+-\alpha_-)\sin\theta}\;,
\end{displaymath}
and $\abs{a}\le1$ is equivalent to:
\begin{displaymath}
(1+(\alpha_++\alpha_-)(1-\cos\theta))^2+(\alpha_+-\alpha_-)^2\sin^2\theta\ge1\;,
\end{displaymath}
which holds naturally for all $\theta$.
To conclude, the implicit split method is unconditionally stable.

\section{Implicit Enhanced Finite Volume Methods}
\label{app:imp}
In this appendix we extend the enhanced method of Section~\ref{sec:fvm} to implicit time-integrators.
First, let us consider the backward Euler time-integrator, which is unconditionally stable combined with the segregated upwind flux, as shown in the previous appendix.

To illustrate the idea, in order to update the solutions from $\tau^n$ to $\tau^{n+1}$, all spatial discretizations happen at $\tau^{n+1}$ instead of $\tau^n$, c.f. the explicit methods.
For example, the counterpart of (\ref{eq:fvm_gen_g}) reads:
\begin{align}\label{eq:imp_gen_g}
  &\ \frac{\eta_{j-1/2}^2[(R^{n+1})^2G_{j-1/2}^{n+1}-(R^n)^2G_{j-1/2}^n]}{\Delta\tau^n} + \frac{1}{\Delta\eta}\left[F_j^{G,n+1}-F_{j-1}^{G,n+1}\right] \\
  \notag
  =&\ \eta_{j-1/2}^2(R^{n+1})^2f_{j-1/2}^{n+1}-\eta_{j-1/2}^2R'^{n+1}R^{n+1}G^{n+1}_{j-1/2}\;, 
\end{align}
where $F_j^{G,n+1}$ approximates:
\begin{equation}\label{eq:imp_gen_flux_g}
  F^{G,n+1}_j \approx \left(\frac{V}{R}-\frac{\eta R'}{R}\right)\eta^2R^2G\Big|_{\eta=\eta_j,\;\tau=\tau^{n+1}}\;,
\end{equation}
and it is segrated into $F_j^{G,n+1} = F_{V,j}^{G,n+1} + F_{R',j}^{G,n+1}$.

In Section~\ref{sec:fvm}, (\ref{eq:fvm_t_gcl}) is obtained from the explicit formula~(\ref{eq:fvm_case}); hence it needs to be modified to:
\begin{equation}\label{eq:imp_t_gcl}
  R'^{n+1} = \frac{(R^{n+1})^2-(R^n)^2}{2\Delta\tau^n R^{n+1}}\;.
\end{equation}
In analogous of (\ref{eq:fvm_t_gcl_rp}), we compute $R'^{n+1}$ such that:
\begin{equation}\label{eq:imp_t_gcl_rp}
  R'^{n+1} = \left(1-\frac{1}{4}\Delta\eta^2\right)^{-1}V_{N_{\eta}}^{n+1}\;.
\end{equation}

Now we have similar to Theorem~\ref{thm:fvm_gtcl} the following result:
\begin{theorem}\label{thm:imp_gtcl}
  The numerical method given by the implicit version of (\ref{eq:fvm_gen_g}), (\ref{eq:fvm_gen_m}) and (\ref{eq:fvm_gen_v}), c.f., (\ref{eq:imp_gen_g}), satisfies both DGCL and DTCL if: (1) $\mathcal{F}_j^{n+1}$ is additive and $V$-consistent, (2) $\hat{\mathcal{F}}_j^{n+1}$ is additive and cubic-preserving, (3) $\mathscr{F}_{u,j}^{M,n+1}=F_{u,j}^{M,n+1}$, and (4) $R'^{n+1}$ equals the right hand side of (\ref{eq:imp_t_gcl}).
\end{theorem}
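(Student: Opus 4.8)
The plan is to replay, with only cosmetic changes, the argument that proved Theorem~\ref{thm:fvm_gtcl}, shifting every solution-dependent and geometric quantity from $\tau^n$ to $\tau^{n+1}$. First I would add the implicit update~(\ref{eq:imp_gen_g}) to its $M$-counterpart, then substitute the implicit velocity equation, namely (\ref{eq:fvm_gen_v}) with the superscript $n$ replaced by $n+1$ on both sides. Assumptions~(1) and~(2) give additivity of $\mathcal{F}_j^{n+1}$ and $\hat{\mathcal{F}}_j^{n+1}$, so the $V$-fluxes and $R'$-fluxes for $G$ and $M$ collapse into those for $\Theta=G+M$; assumption~(3), $\mathscr{F}_{u,j}^{M,n+1}=F_{u,j}^{M,n+1}$, makes the discrete $u$-flux in the $M$-equation cancel against the one carried in the velocity equation. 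The outcome is the verbatim implicit analogue of (\ref{eq:fvm_thm_gtcl_2}), i.e.\ a conservative finite-volume discretization of the continuous totality conservation law~(\ref{eq:model_tclr}) with the geometric source term $-\eta_{j-1/2}^2 R'^{n+1}R^{n+1}\Theta_{j-1/2}^{n+1}$ evaluated implicitly; hence the scheme satisfies DTCL.

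For DGCL I would set $\Theta_{j-1/2}^m\equiv 1$ for $m=n,n+1$. The time-difference term then reduces to $\eta_{j-1/2}^2[(R^{n+1})^2-(R^n)^2]/\Delta\tau^n$, and by the modified radius relation~(\ref{eq:imp_t_gcl}) this equals $2\eta_{j-1/2}^2 R'^{n+1}R^{n+1}$. The $V$-consistency of $\mathcal{F}_j^{n+1}$ yields $F_{V,j}^{1,n+1}=\eta_j^2 R^{n+1}V_j^{n+1}$, so the telescoping $V$-flux difference cancels the velocity term on the right-hand side exactly, leaving only
\begin{displaymath}
  \frac{1}{\Delta\eta}\left(F_{R',j}^{1,n+1}-F_{R',j-1}^{1,n+1}\right) = -3\eta_{j-1/2}^2 R'^{n+1}R^{n+1}\;,
\end{displaymath}
which is precisely the cubic-preserving property of $\hat{\mathcal{F}}_j^{n+1}$ from assumption~(2). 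Thus (\ref{eq:imp_gen_g}) and its $M$-analogue reproduce the implicit velocity equation whenever $\Theta^m\equiv1$, i.e.\ the method is DGCL.

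The only role the implicitness plays is in observing that the structural properties being invoked --- additivity, $V$-consistency, cubic-preserving --- are statements about the flux functions at a fixed time level and never refer to how $V_j$, the limiters, or the source terms are actually produced; they therefore transfer unchanged from level $n$ to level $n+1$, and I expect no obstacle there, so the proof is essentially routine given Theorem~\ref{thm:fvm_gtcl}. The one genuinely new ingredient is (\ref{eq:imp_t_gcl}), where $R^n$ in the denominator of (\ref{eq:fvm_t_gcl}) is replaced by $R^{n+1}$; the main thing I would verify carefully is that the bookkeeping still yields the net coefficient $2-3=-1$ multiplying $\eta_{j-1/2}^2 R'^{n+1}R^{n+1}$, matching the geometric source term of the implicit GCL, and that (\ref{eq:imp_t_gcl_rp}) is the consistent choice for computing $R'^{n+1}$ under the same no-flux-at-the-moving-boundary reasoning used after Theorem~\ref{thm:fvm_gtcl}. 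A secondary, routine point inherited from the explicit case is that the cubic-preserving and $V$-consistency identities on the boundary intervals near $\eta=0$ and $\eta=1$ depend on the special reconstructions of Sections~\ref{sec:fvm_rflux}--\ref{sec:fvm_bflux}; these are purely geometric and carry over with $n\to n+1$ without a separate argument.
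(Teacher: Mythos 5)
Your proposal is correct and follows exactly the route the paper intends: the paper omits this proof as ``completely analogous'' to that of Theorem~\ref{thm:fvm_gtcl}, and your argument is precisely that analogue with every solution-dependent and geometric quantity shifted to $\tau^{n+1}$, including the key bookkeeping check that (\ref{eq:imp_t_gcl}) turns the time-difference term into $2\eta_{j-1/2}^2R'^{n+1}R^{n+1}$ so that the balance $2-3=-1$ against the cubic-preserving flux difference and the geometric source term still holds. Nothing further is needed.
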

The proof is completely analogous and omitted here.

\smallskip

Extension to higher-order implicit time-integrators are straightforward by using the Diagonally Implicit Runge-Kutta (DIRK) methods, which are well documented in many texts on numericla methods for ordinary differential equations, such as~\cite{JCButcher:2016a}.
In essense, a DIRK method is a multi-stage method with higher time accuracy, where each stage is equivalent to a backward Euler step; hence the method described before extends naturally to these time-integrators.
The particular one that we will use in combine with MUSCL in space is the second-order DIRK method given in Section 361 of~\cite{JCButcher:2016a}.

To demonstrate the numerical performances, we repeat the test in Section~\ref{sec:num_mod_1} using a much larger Courant number $\alpha_{\cfl}=10.0$.
The solutions on a grid of $50$ uniform cells as well as the convergence plots of the terminal radii are plotted in Figure~\ref{fg:imp_mod_sol}.
\begin{figure}\centering
  \begin{subfigure}[b]{.48\textwidth}\centering
    \includegraphics[width=.8\textwidth]{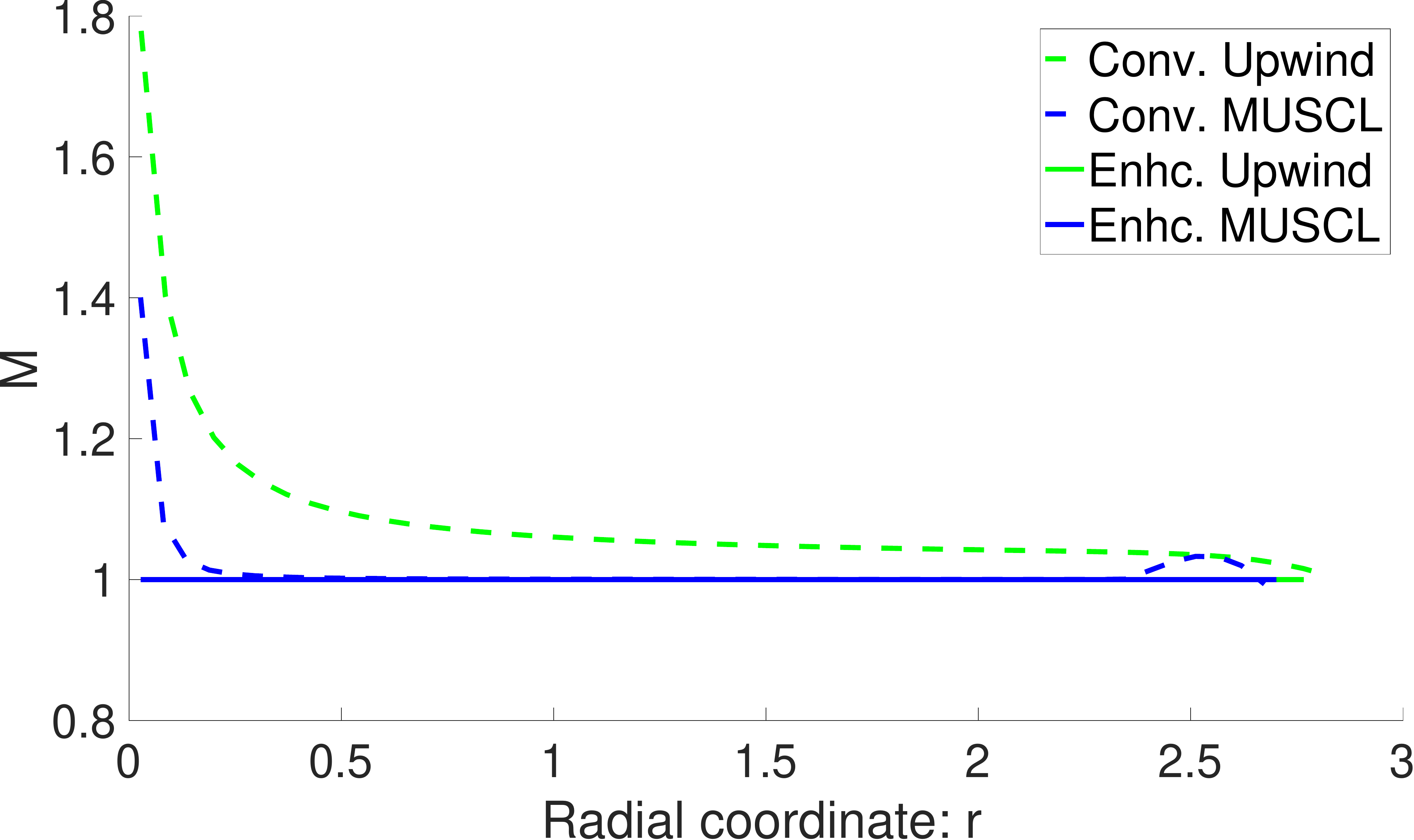}
    \caption{Cell numbers for $M$ at $T=2.0$.}
    \label{fg:imp_mod_sol_m}
  \end{subfigure}
  \begin{subfigure}[b]{.48\textwidth}\centering
    \includegraphics[width=.8\textwidth]{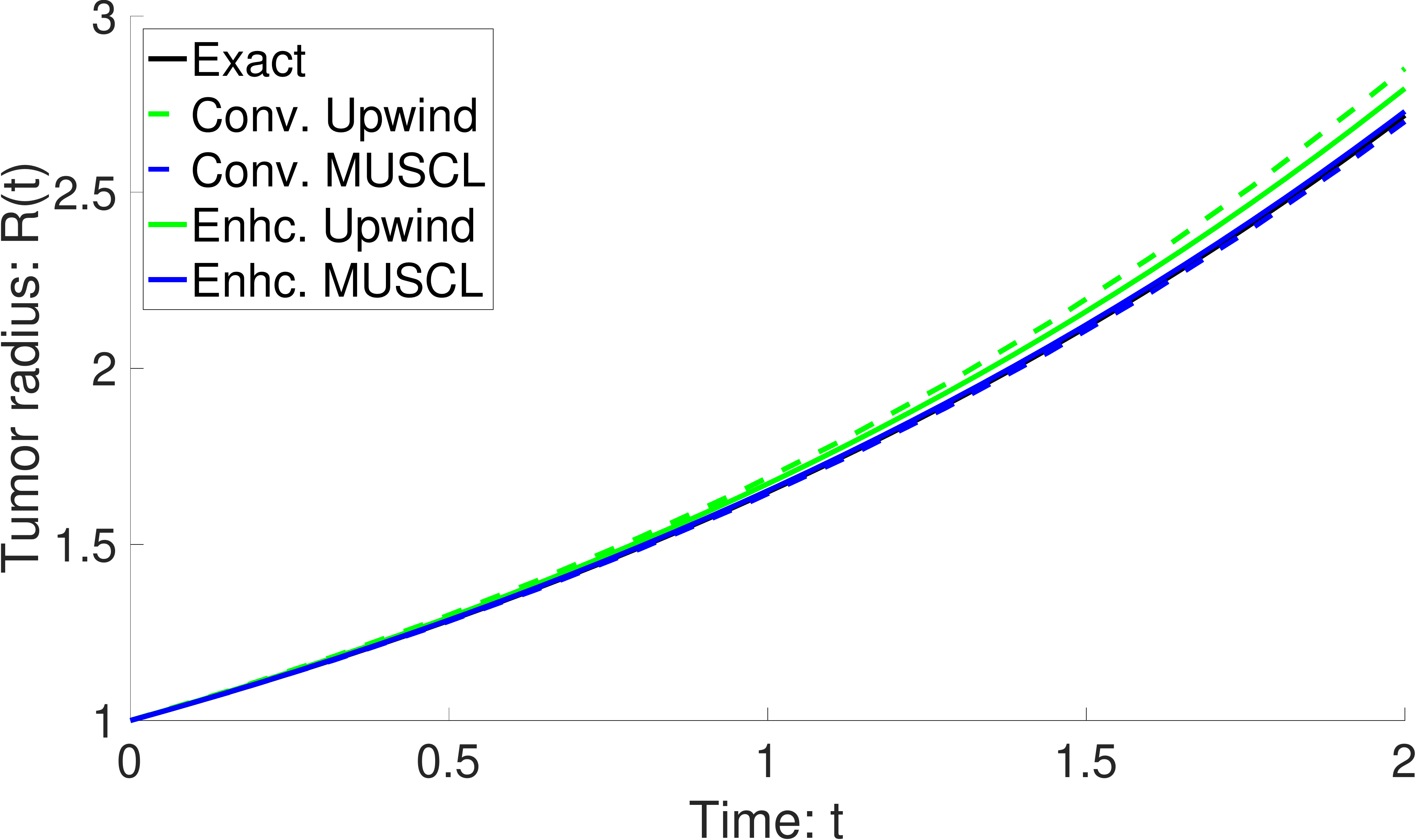}
    \caption{Radius growth history.}
    \label{fg:imp_mod_sol_rad}
  \end{subfigure} \\
  \begin{subfigure}[b]{.48\textwidth}\centering
    \includegraphics[width=.8\textwidth]{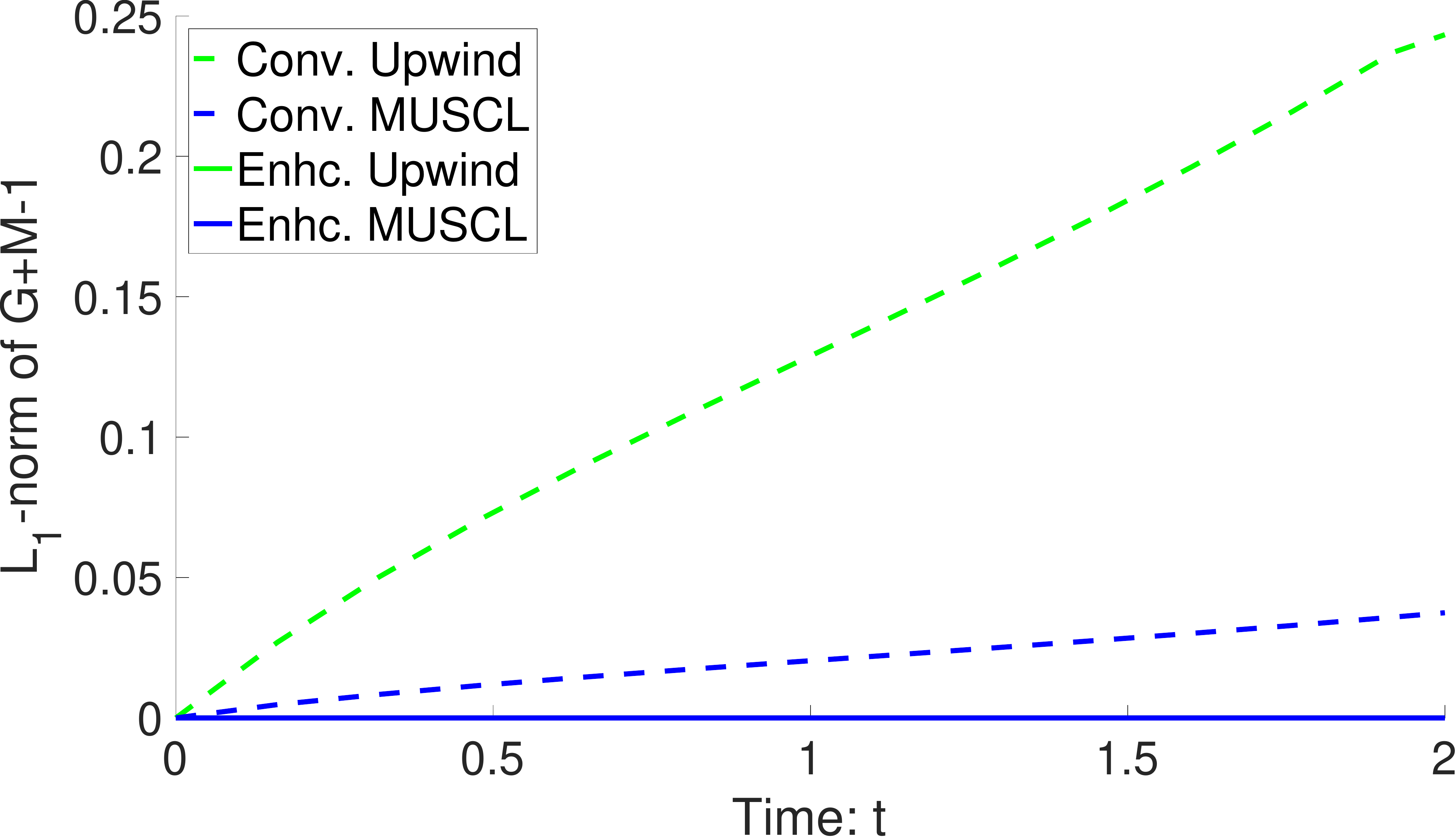}
    \caption{Histories of $d_\theta$.}
    \label{fg:imp_mod_sol_inc}
  \end{subfigure}
  \begin{subfigure}[b]{.48\textwidth}\centering
    \includegraphics[width=.8\textwidth]{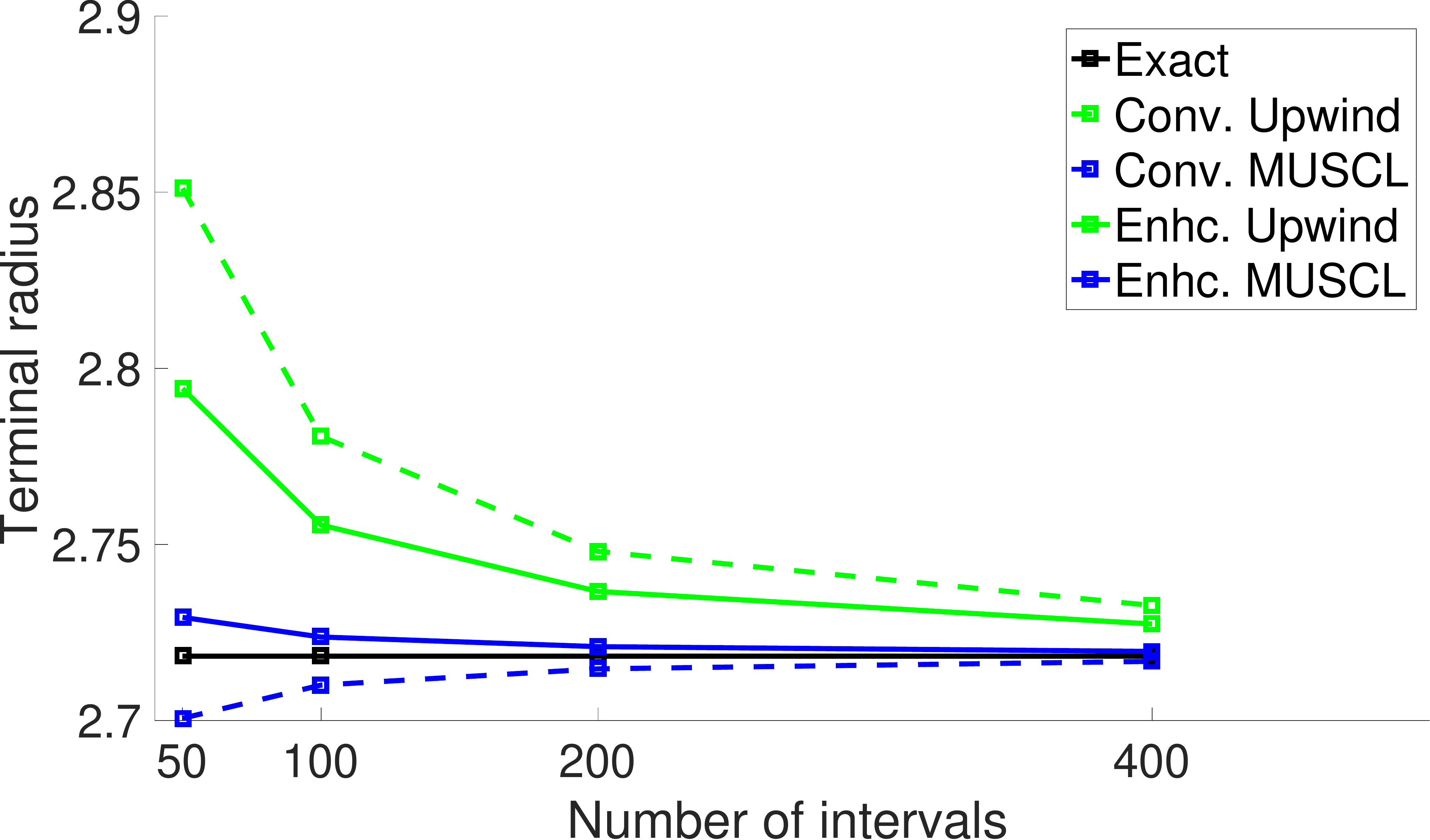}
    \caption{Final radii convergence.}
    \label{fg:imp_mod_sol_conv}
  \end{subfigure}
  \vspace*{-.1in}
  \caption{Implicit solutions to the test in Section~\ref{sec:num_mod_1} on a $50$-cell grid.}
  \label{fg:imp_mod_sol}
\end{figure}
Comparing these figures to Figures~\ref{fg:num_mod_1_inc}--\ref{fg:num_mod_1_conv_rad}, we can draw very similar conclusions, indicating the successful extension of the enhanced methods in Section~\ref{sec:fvm} to implicit time-integrators.

\end{document}